\title{Improved stability ranges in the homology of Torelli and congruence subgroups}
\author{Cihan Bahran}
\affil{Department of Mathematics, Bilkent University\\
06800 Ankara, Turkey}\date{}
\date{}
\newcommand{\FI}{\mathbf{FI}}
\newcommand{\OI}{\mathbf{OI}}
\DeclareMathOperator{\sr}{\mathbf{sr}}
\DeclareMathOperator{\glr}{\mathbf{glr}}
\DeclareMathOperator{\rk}{rk}
\DeclareMathOperator{\rank}{rank}
\DeclareMathOperator{\VIC}{\mathbf{VIC}}
\DeclareMathOperator{\VI}{\mathbf{VI}}
\DeclareMathOperator{\VIS}{\mathbf{VIS}}
\DeclareMathOperator{\SI}{\mathbf{SI}}
\DeclareMathOperator{\SU}{SU}
\DeclareMathOperator{\PB}{PB}
\DeclareMathOperator{\PBC}{PBC}
\DeclareMathOperator{\UuU}{U}
\DeclareMathOperator{\HU}{HU}
\DeclareMathOperator{\Link}{Link}
\DeclareMathOperator{\Homeo}{Homeo}
\DeclareMathOperator{\Face}{Face}
\DeclareMathOperator{\Pos}{Pos}
\DeclareMathOperator{\Mod}{Mod}
\DeclareMathOperator{\FB}{\mathbf{FB}}
\newcommand{\ia}{\mathtt{IA}}
\newcommand{\auf}{\mathtt{AF}}
\newcommand{\cauf}{\mathcal{AF}}
\newcommand{\cGL}{\mathcal{GL}}
\newcommand{\cSL}{\mathcal{SL}}
\newcommand{\cMod}{\mathcal{MOD}}
\newcommand{\ctor}{\mathcal{I}}
\newcommand{\cSp}{\mathcal{SP}}
\newcommand{\cia}{\mathcal{IA}}
\newcommand{\torel}{\mathtt{I}}
\newcommand{\hth}[1]{\mathbf{H3}(#1)}
\newcounter{dummy}
\newcommand\sitem[1][]{\item[(#1)]\refstepcounter{dummy}\def\@currentlabel{#1}}
\newcommand{\cofi}[1]{\co_{#1}^{\FI}}
\DeclareMathOperator{\ce}{\mathsf{C}}
\DeclareMathOperator{\de}{\mathsf{D}}
\DeclareMathOperator{\je}{\mathsf{J}}
\DeclareMathOperator{\me}{\mathsf{M}}
\DeclareMathOperator{\ab}{\mathcal{A}}
\DeclareMathOperator{\cat}{\mathsf{Cat}}
\DeclareMathOperator{\reg}{reg}
\DeclareMathOperator{\induce}{\Ind_{\FB}^{\FI}}
\DeclareMathOperator{\kk}{\mathbb{F}}
\newcommand{\atsi}{\mathbf{A}}
\newcommand{\fU}{\mathfrak{U}}
\newcommand{\shift}[2]{\mathbf{\Sigma}^{#2}   #1 }
\newcommand{\weak}{\delta}
\newcommand{\deriv}{\mathbf{\Delta}}
\newcommand{\kert}{\mathbf{K}}
\newcommand{\bul}{\bullet}
\DeclareMathOperator{\Ch}{Ch}
\newcommand{\locoh}[1]{\co_{\mathfrak{m}}^{#1}}
\DeclareMathOperator{\lkan}{Lan}
\newcommand{\desu}{\mathbf{\Omega}}
\newcommand{\aus}{\widetilde{\operatorname{ss}}}
\newcommand{\ses}{\operatorname{ss}}
\newcommand{\manif}{\mathcal{M}}
\newcommand{\surf}{\mathcal{S}}
   \def\MR#1{}
\newcommand{\scx}{\mathsf{Scx}}
\newcommand{\CW}{\mathsf{CW}}
\newcommand{\moncat}{\mathsf{MonCat}}
\newcommand{\bmcat}{\mathsf{BrMonCat}}
\newcommand{\mongpd}{\mathsf{MonGpd}}
\newcommand{\bmgpd}{\mathsf{BrMonGpd}}
\def\blfootnote{\gdef\@thefnmark{}\@footnotetext}
\begin{document}
\maketitle

\blfootnote{\textup{2010} \textit{Mathematics Subject Classification}.
Primary 20J06; Secondary 18A25, 11F75, 55U10.} 
\blfootnote{\textit{Key words and phrases}. Torelli groups, congruence subgroups, central stability.}
\blfootnote{The author was supported in part by T\"{U}B\.{I}TAK 119F422.}
\vspace{-0.8 in}
\begin{onecolabstract} 
We improve the central stability ranges for 
\begin{itemize}
\item $\co_{2}\!\big(\text{Torelli subgroup of $\Aut(F_{n})$'s}\big)$ as $\GL_{n}(\zz)$-representations,
 \item \vspace{0.1cm}$\co_{2}\!\big(\text{Torelli subgroup of mapping class groups of $\surf_{g}^{1}$'s}\big)$ as $\Sp_{2g}(\zz)$-representations,
 \item \vspace{0.07cm}$\co_{k}\!\big(\text{$I$-congruence subgroups of $\GL_{n}(R)$'s}\big)$ as $\SL_{n}^{\fU}(R/I)$-representations for a commutative ring $R$ of finite Bass stable rank with a proper ideal $I$.
\end{itemize} 
\end{onecolabstract}
\vspace{0.4 in}

{\tableofcontents}

\section{Introduction}
The minimal amount of structure present in most \textbf{homological stability} phenomena can be captured by the category
\[
 \atsi \colon \quad 0 \to 1 \to 2 \to \cdots
\]
More precisely, $\Obj(\atsi) = \zz_{\geq 0}$ and $\atsi$ has a unique morphism $(m,n) \colon m \to n$ whenever $m \leq n$ and no morphism $m \to n$ if $m > n$. A functor 
\begin{itemize}
 \item $X \colon \atsi \to \ce$ is called an \textbf{$\atsi$-object} in $\ce$,
 \item $G \colon \atsi \to \Grp$ is called an \textbf{$\atsi$-group},
 \item $V \colon \atsi \to \lMod{\zz}$ an \textbf{$\atsi$-module}, etc.
\end{itemize}
Maps between $\atsi$-objects are defined as natural transformations in the appropriate functor categories. 
For an $\atsi$-object $X$, we write $X_{n}$ for the evaluation of $X$ at $n \in \zz_{\geq 0}$ and $X_{(m,n)} \colon X_{m} \to X_{n}$ for the induced morphism when $m \leq n$. We employ similar terminology for functors out of categories other than $\atsi$ as well. 

%The datum of an $\atsi$-object in $\ce$ is determined by a sequence $\{X_{n}\}$ of objects and a sequence $\{X_{n} \to X_{n+1}\}$ of morphisms in $\ce$.

%\begin{itemize}
%  \item the sequence $\{B_{n}\}$ of braid groups \cite[Stability Theorem, page 201]{arnold-braids},
% \item the sequence $\left\{\GL_{n}(R)\right\}$ of general linear groups when $R$ is a ring of finite Bass stable rank , ,
% \item the sequence $\left\{\Sp_{2m}(R)\right\}$ of symplectic groups when $R$ is a commutative ring of finite unitary stable rank \cite[Example 6.1, Theorem 8.2]{mirzaii-vdk-unitary}, 
% \item the sequence $\left\{\Mod\!\left(\surf_{g}^{1}
% \right)\right\}$ of mapping class groups  , , \cite[Corollary 5.2]{gkrw-cells-mcg} where $\surf_{g}^{1}$ is the connected oriented surface of genus $g$ with $1$ boundary component,
%\end{itemize}

%Given a map $\pi \colon G \to Q$ of $\atsi$-groups, the kernel $\atsi$-group $\ker \alpha$ immediately acquires an action of a category extending that of $\atsi$:

In \textbf{representation stability} phenomena (as conceived in \cite{church-farb-rep-stab}) one has more structure: an $\atsi$-group G together with an $\atsi$-module on which $G$ acts in a compatible way. The minimal amount of structure here is captured by the following category:

\begin{defn} \label{grothendieck-const}
 Given an $\atsi$-group $G$, we write $\atsi_{G}$ for the Grothendieck construction \cite[Definition 1.1]{thomason-hocolim} $\int_{\atsi} G$ of the composite functor
 \[
 \atsi \xrightarrow{G} \Grp \emb \Cat \, ,
 \]
via considering every group as a category with a single object. More concretely, the category $\atsi_{G}$ has the following description:
\begin{itemize}
 \item $\Obj(\atsi_{G}) = \zz_{\geq 0}$.
 \item $\Hom_{\atsi_{G}}(m,n) = 
\begin{cases}
 \{(\gamma,m,n) : \gamma \in G_{n}\} & \text{if $m \leq n$,}
 \\
 \empt & \text{if $m > n$.}
\end{cases}$
 \item For $(\gamma,m,n) \colon m \to n$ and $(\delta,n,p) \colon n \to p$ in $\atsi_{G}$, 
 \[
  (\delta,n,p) \circ (\gamma,m,n) = \left(
  \delta \cdot G_{(n,p)}(\gamma),m,p\right) \, .
 \]
\end{itemize}
\end{defn}

\begin{rem}
 Note that for an $\atsi$-group $G$, we have $\Aut_{\atsi_{G}}(n) \cong G_{n}$. Moreover, the datum of an $\atsi_{G}$-module $V \colon \atsi_{G} \to \lMod{\zz}$ is precisely that of a \textbf{consistent sequence} $\{V_{n}\}$ of $G_{n}$-representations in the sense of \cite[Section 2.3]{church-farb-rep-stab} minus the finiteness requirement.
\end{rem}

\begin{ex} \label{symmetric-A}
 The assignment $n \mapsto \sym{n}$ where $\sym{n}$ is the symmetric group on $\{1,\dots,n\}$, together with the standard inclusions, defines an $\atsi$-group $\mathfrak{S}$. This is perhaps the most thoroughly studied $\atsi$-group in representation stability. 
\end{ex}

Our main results improve the stable ranges in three instances of representation stability: \textbf{Theorem \ref{H2-IA}} in Section \ref{torelli-AF}, \textbf{Theorem \ref{H2-torelli}} in Section \ref{torelli-MCG}, \textbf{Theorem \ref{congruence-larger-action}} in Section \ref{section:cong-GL}. Each instance occurs in the group  homology of the kernel of an $\atsi$-group homomorphism (compare with \cite[page 256]{church-farb-rep-stab}) in the following way:

\begin{prop} \label{conj-action}
 Given a map $\pi \colon G \to Q$ of $\atsi$-groups, the $\atsi$-subgroup $K \coloneqq \ker \pi$  of $G$ extends to an $\atsi_{G}$-group via the assignments
\begin{align*}
 K_{(\gamma,m,n)} \colon K_{m} &\to K_{n}
 \\
 x &\mapsto \gamma \cdot G_{(m,n)}(x) \cdot \gamma^{-1}
\end{align*}
 for each $(\gamma,m,n) \colon m \to n$ in $\atsi_{G}$.
\end{prop}

\begin{prop} \label{factor-AQ}
 Let $\pi \colon G \to Q$ be a map of $\atsi$-groups for which $\pi_{n} \colon G_{n} \to Q_{n}$ is surjective for every $n \in \zz_{\geq 0}$. Regarding $K \coloneqq \ker \pi$ as an $\atsi_{G}$-group as in Proposition \ref{conj-action}, for every $k \in \zz_{\geq 0}$, writing\footnote{Throughout this paper, $\co_{k}(\Gamma) \coloneqq \co_{k}(\Gamma;\zz)$ denotes the $k$-th \textbf{integral} homology of a group $\Gamma$.} 
\[
 \co_{k} \colon \Grp \to \lMod{\zz}
\]
for the $k$-th group homology functor, there is a (unique) functor $\atsi_{Q} \to \lMod{\zz}$ that makes the diagram
\begin{align*}
 \xymatrix{
 \atsi_{G} \ar[r]^-{K} \ar[d]_-{\atsi_{\pi}} & \Grp \ar[r]^-{\co_{k}} & \lMod{\zz}
 \\
 \atsi_{Q} \ar@{-->}_-{\exists !}[urr]
 }
\end{align*}
commute; in other words, the $\atsi_{G}$-module $\co_{k}(K)$ (uniquely) descends to an $\atsi_{Q}$-module.
\end{prop}
\begin{proof}
 This is simply because for every $n \in \zz_{\geq 0}$ the conjugation action of $G_{n}$ on $K_{n}$ induces an action of $G_{n}/K_{n} \cong Q_{n}$ (via $\pi$) on $\co_{k}(K_{n})$ \cite[Corollary II.6.3]{brown-coh-book}.
\end{proof}

\subsection{Types of stabilizations in representation stability}
The vast majority of homological stability results are stated with two stable ranges: one after which the maps become surjective (a form of \textbf{finite generation}), and one after which the maps become isomorphisms (a form of \textbf{finite presentation}). 

Fix an $\atsi$-group $G$. A fairly natural form of finite generation for $\atsi_{G}$-modules is as follows: whenever $m \leq n$, let us write \begin{align*}
 \Xi \colon \Hom_{G_{m}}\!\left(
 -,\Res_{G_{m}}^{G_{n}}(-)
 \right)
 \rarr
 \Hom_{G_{n}}\!\left(
 \Ind_{G_{m}}^{G_{n}}(-),-
 \right)
\end{align*}
for the natural isomorphism given by the induction-restriction adjunction associated to the map $G_{(m,n)} \colon G_{m} \to G_{n}$ of groups. 
\begin{defn} \label{surj-stab}
 Let $G$ be an $\atsi$-group and $d \in \zz_{\geq 0}$. An $\atsi_{G}$-module $V$ has \textbf{surjective stability degree} $\leq d$ (cf. \cite[Definition 2.3, part II]{church-farb-rep-stab}) if the $G_{n}$-equivariant map 
\begin{align} \label{nat1}
 \Xi V_{\pars*{1_{G_{n}},\,n-1,\,n}} \colon \Ind_{G_{n-1}}^{G_{n}} V_{n-1} \rarr V_{n}
\end{align}
is surjective for $n > d$. More categorically, this is equivalent to the property that the only $\atsi_{G}$-submodule $W \leq V$ with $W_{i} = V_{i}$ for $0 \leq i \leq d$ is $W = V$ itself. \end{defn}

Although there are other possible (arguably more natural) choices, the form of finite presentation we shall treat for $\atsi_{G}$-modules require some further input. First note that for $n \geq 2$, replacing $n$ with $n-1$ in (\ref{nat1}), we have a $G_{n-1}$-equivariant map
\[
 \Xi V_{\pars*{1_{G_{n-1}},\,n-2,\,n-1}} \colon \Ind_{G_{n-2}}^{G_{n-1}} V_{n-2} \rarr V_{n-1} \, ,
\]
so applying the functor $\Ind_{G_{n-1}}^{G_{n}}$, we get a $G_{n}$-equivariant map 
\begin{align} \label{nat2}
 \Ind_{G_{n-1}}^{G_{n}}\!\pars*{\Xi V_{\pars*{1_{G_{n-1}},\,n-2,\,n-1}}} \colon \Ind_{G_{n-2}}^{G_{n}}V_{n-2} \rarr \Ind_{G_{n-1}}^{G_{n}} V_{n-1} \, .
\end{align}
 
\begin{defn} \label{tail-central}
 Let $G$ be an $\atsi$-group. A sequence $(c_{n} : n \geq 2)$ of elements with $c_{n} \in G_{n}$ for every $n$ is called \textbf{tail-central} in $G$ if $c_{n}$ commutes with the image of $G_{n-2}$ inside $G_{n}$ for every $n \geq 2$.
\end{defn}

\begin{ex} \label{symm-tc}
The sequence
\[
 n \,\mapsto\, (n-1,n) \in \sym{n}
\]
of transpositions defines a tail-central sequence the $\atsi$-group $\mathfrak{S}$ of Example \ref{symmetric-A}.
\end{ex}

\begin{conv} \label{ring-conv}
 Every ring $R$ in this paper is assumed to be \textbf{nonzero}, \textbf{associative}, \textbf{unital}, and (most non-standardly) \textbf{stably finite} \cite[Section 1B]{lam-lectures}. The stably finite condition means that for every $n \in \zz_{\geq 0}$ and every pair of $n \times n$ matrices $A,B$ over $R$, we have
\[
 AB = I_{n} \,\imp\, BA = I_{n} \, .
\]
For example, commutative rings \cite[Proposition 1.12]{lam-lectures} and one-sided Noetherian rings \cite[Proposition 1.13]{lam-lectures} are stably finite.
\end{conv}

\begin{ex} \label{A-grp-GL}
 For every ring $R$, the assignment $n \mapsto \GL_{n}(R)$, together with upper-left block inclusions, defines an $\atsi$-group $\GL(R)$. For every $n \geq 2$, let $c_{n} \in \GL_{n}(R)$ be the matrix which is obtained by swapping the last two rows of the $n \times n$ identity matrix. This way $(c_{n})$ is a tail-central sequence for $\GL(R)$.
\end{ex}

Given a tail-central sequence $(c_{n})$ in the $\atsi$-group $G$, for each $n \geq 2$ we can twist the map (\ref{nat2}) by $c_{n}$ to get another $G_{n}$-equivariant map
\begin{align} \label{nat3}
 \left[\Ind_{G_{n-1}}^{G_{n}}\! \pars*{\Xi V_{\pars*{1_{G_{n-1}},\,n-2,\,n-1}}}\right]^{c_{n}} \colon \Ind_{G_{n-2}}^{G_{n}}V_{n-2} \rarr \Ind_{G_{n-1}}^{G_{n}} V_{n-1} 
\end{align}
as follows:
\begin{itemize}
 \item Apply $\Xi^{-1}$ to (\ref{nat2}) to get a $G_{n-2}$-equivariant map $V_{n-2} \rarr \Ind_{G_{n-1}}^{G_{n}} V_{n-1}$.
 \item Noting that the ``multiplication by $c_{n}$'' map on a $G_{n}$-module is $G_{n-2}$-equivariant (by the tail-central condition), consider the composite $G_{n-2}$-equivariant map
\[
 V_{n-2} \to \Ind_{G_{n-1}}^{G_{n}} V_{n-1} \xrightarrow{c_{n}} \Ind_{G_{n-1}}^{G_{n}} V_{n-1} \, .
\]
 \item Apply $\Xi$ back to the above $G_{n-2}$-equivariant map.
\end{itemize}
This way, the coequalizer of (\ref{nat2}) and (\ref{nat3}) is the largest quotient of $\Ind_{G_{n-1}}^{G_{n}} V_{n-1}$ that $c_{n}$ acts trivially on the image of $V_{n-2}$ .
\begin{defn} \label{defn:central}
 Let $G$ be an $\atsi$-group equipped with a tail-central sequence $(c_{n})$ and $r \in \zz_{\geq 1}$. An $\atsi_{G}$-module $V$ has \textbf{central stability degree} $\leq r$ if the maps (\ref{nat1}), (\ref{nat2}), (\ref{nat3}) form a coequalizer diagram
\begin{align*}
\mathlarger{
 \Ind_{G_{n-2}}^{G_{n}} V_{n-2} \rightrightarrows   
 \Ind_{G_{n-1}}^{G_{n}} V_{n-1}
 \rarr
 V_{n}
 }
\end{align*}
for $n > r$. We often refer to (\ref{nat1}), (\ref{nat2}), (\ref{nat3}) as the \textbf{natural maps} where the tail-central sequence will have been fixed in advance.
\end{defn}

\begin{rem}
 Central stability is a stronger condition than surjective stability since the coequalizer of two parallel morphisms is always an epimorphism.
\end{rem}
%Instead of reiterating these constructions, our main theorems shall state that either 
%\begin{itemize}
% \item a map of the form (\ref{nat1}) is surjective in a specified range, or
% \item a diagram made up of maps of the form (\ref{nat1}), (\ref{nat2}), (\ref{nat3}) is a coequalizer in a specified range,
%\end{itemize}
%and refer to the maps as \emph{the natural maps}.
%\begin{align*}
% \rho_{n}^{*} \colon G_{n} \rarr \Aut\!\left(\Ind_{G_{n-1}}^{G_{n}} V_{n-1}\right) \, ,
%\end{align*}
%there is a $G_{n-2}$-equivariant map
%\begin{align*}
% \rho_{n}^{*}((n-1,n)) \colon \Ind_{G_{n-1}}^{G_{n}} V_{n-1} \rarr \Ind_{G_{n-1}}^{G_{n}} V_{n-1} \, .
%\end{align*}
%Now from (\ref{nat2}), we get a $G_{n-2}$-equivariant map 
%\begin{align*}
% \Xi^{-1}\!\left(
% \Ind_{G_{n-1}}^{G_{n}} \Xi(\phi_{n-2})
% \right) \colon V_{n-2} \rarr \Ind_{G_{n-1}}^{G_{n}} V_{n-1}
%\end{align*}
%that we can precompose with and get another $G_{n-2}$-equivariant map
%\begin{align*}
% \rho_{n}^{*}(s_{n-1})
% \circ \Xi^{-1}\!\left(
% \Ind_{G_{n-1}}^{G_{n}} \Xi(\phi_{n-2})
% \right) \colon V_{n-2} \rarr \Ind_{G_{n-1}}^{G_{n}} V_{n-1} \, ,
%\end{align*}
%and finally a $G_{n}$-equivariant map 
%\begin{align}
% \Xi\left(
% \rho^{*}_{n}(s_{n-1})
% \circ \Xi^{-1}\!\left(
% \Ind_{G_{n-1}}^{G_{n}} \Xi(\phi_{n-2})
% \right)
% \right) \colon
% \Ind_{G_{n-2}}^{G_{n}} V_{n-2} \rarr \Ind_{G_{n-1}}^{G_{n}} V_{n-1}
%\end{align}

\subsection{Torelli subgroups of automorphism groups of free groups} \label{torelli-AF}
 Writing $F_{n}$ for the free group on $\{x_{1},\dots, x_{n}\}$, the assignment $n \mapsto \Aut(F_{n})$, together with the standard inclusions, defines an $\atsi$-group $\auf$.

The abelianization functor induces a surjective map
\begin{align*}
 \Aut(F_{n}) \rarr \Aut(\zz^{n}) \cong \GL_{n}(\zz)\, ,
\end{align*}
of groups, which patch to a surjective map $\auf \to \GL(\zz)$ of $\atsi$-groups: we write $\ia$ for the kernel. For each $n \in \zz_{\geq 0}$, $\ia_{n}$ is often called the \textbf{Torelli subgroup} of $\Aut(F_{n})$.

\begin{thm}[{\cite[Theorem B]{day-putman-L-pres}}] The $\atsi_{\GL(\zz)}$-module $\co_{2}(\ia)$ has surjective stability degree $\leq 6$, that is, the natural map
\begin{align*}
\mathlarger{
 \Ind_{\GL_{n-1}(\zz)}^{\GL_{n}(\zz)} \co_{2}(\ia_{n-1})
 \rarr
 \co_{2}(\ia_{n})
 }
\end{align*} 
of $\GL_{n}(\zz)$-modules is surjective whenever $n > 6$.
\end{thm}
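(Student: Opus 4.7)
The plan is to exploit the explicit L-presentation of $\ia_{n}$ constructed by Day-Putman, whose generators are the Magnus-type automorphisms $K_{ij}$ (for $i\neq j$) and $K_{ijk}$ (for distinct $i,j,k$) --- each supported on at most three letters of $F_{n}$ --- together with a family of defining relators whose supports are also bounded. The constant $6$ in the theorem should come directly from the worst support appearing among these relators.

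First I would recall Hopf's formula: writing $\ia_{n}=F/N$, where $F$ is free on Day-Putman's generating set $S_{n}$ and $N$ is the normal closure of their relators, one has
\[
\co_{2}(\ia_{n}) \;\cong\; \bigl(N\cap [F,F]\bigr)\big/[F,N].
\]
Every class in $\co_{2}(\ia_{n})$ is thus represented by a product of $F$-conjugates of defining relators; and under the action induced by $\Aut(F_{n})\twoheadrightarrow\GL_{n}(\zz)$, this identification is $\GL_{n}(\zz)$-equivariant.

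Next I would verify the support bound: every generator in $S_{n}$ has support of size $\leq 3$, and every defining relator --- including each element in the orbit generated by applying the L-presentation's substitution endomorphisms to the parametrized relators --- has support of size $\leq 6$. Granting this, for any such relator $r$ with letters drawn from some $T\subseteq\{1,\dots,n\}$ of size $\leq 6$ and any $n>6$, one can pick $\sigma\in\GL_{n}(\zz)$ with $\sigma(T)\subseteq\{1,\dots,n-1\}$; hence $[r]\in\co_{2}(\ia_{n})$ lies in the $\GL_{n}(\zz)$-orbit of an element in the image of $\co_{2}(\ia_{n-1})\to\co_{2}(\ia_{n})$. Translating across the adjunction isomorphism $\Xi$ yields surjectivity of the natural map
\[
\Ind_{\GL_{n-1}(\zz)}^{\GL_{n}(\zz)}\co_{2}(\ia_{n-1}) \;\longrightarrow\; \co_{2}(\ia_{n})
\]
for $n>6$, as required.

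The hard part is the support bound itself. An L-presentation allows an infinite family of relators obtained by iterating a finite list of endomorphisms on finitely many parametrized relators, and a priori such orbits can have unbounded support. The essential technical input is that Day-Putman's substitution endomorphisms are themselves of bounded support, so that all iterated images remain --- up to the $\GL_{n}(\zz)$-action --- within a bounded-support regime. Pinning down the exact constant $6$, rather than some weaker bound, amounts to a careful inspection of the worst basic relator together with the worst endomorphism image in Day-Putman's explicit list; this combinatorial accounting is where the main work lies.
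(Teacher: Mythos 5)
Note that the paper does not prove this statement; it is imported verbatim as Day--Putman's Theorem B, so there is no internal proof to compare against. Your sketch, however, has a genuine gap in the Hopf-formula step. Writing $\ia_{n}=F/N$, Hopf's formula identifies $\co_{2}(\ia_{n})\cong(N\cap[F,F])/[F,N]$ with the kernel of the natural map $N/[F,N]\to F^{\mathrm{ab}}$. The abelian group $N/[F,N]$ is indeed generated by the images $\bar r$ of the defining relators (conjugation by $F$ acts trivially modulo $[F,N]$), and those relators have bounded support. But $\co_{2}$ is only the \emph{subgroup} consisting of $\zz$-linear combinations $\sum c_{i}\bar r_{i}$ whose image in $F^{\mathrm{ab}}$ vanishes, i.e.\ syzygies among relators. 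An individual relator $r$ typically does not lie in $[F,F]$, so ``$[r]\in\co_{2}(\ia_{n})$'' is not a meaningful object; you silently pass from ``products of conjugates of relators'' to ``a single relator $r$.'' A syzygy can involve relators whose supports overlap pairwise but whose union is arbitrarily large, so bounding the support of each $r_{i}$ does not by itself bound the support of a generating set for the kernel. Producing bounded-support generators for the kernel (not merely for $N/[F,N]$) is the technical heart of Day--Putman's argument, and it is absent from your outline.

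A second, smaller issue: you invoke $\GL_{n}(\zz)$-equivariance of the Hopf-formula identification, but the Magnus generating set $\{K_{ij},K_{ijk}\}$ is only stable under $\sym{n}$ (and a few elementary transvections), not under all of $\GL_{n}(\zz)$, so a $\GL_{n}(\zz)$-action on the presentation data requires justification. For the final step this is harmless since a permutation already moves a size-$\leq 6$ support into $\{1,\dots,n-1\}$, but as written the equivariance claim overreaches. The overall strategy --- exploit the finite $L$-presentation plus support bounds --- is the right one, but the syzygy-versus-relator conflation must be repaired before the stated constant $6$ can be extracted.
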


\begin{thmro} \label{H2-IA}
 The $\atsi_{\GL(\zz)}$-module $\co_{2}(\ia)$ has central stability degree $\leq 9$, that is, the natural maps
\begin{align*}
\mathlarger{
 \Ind_{\GL_{n-2}(\zz)}^{\GL_{n}(\zz)} \co_{2}(\ia_{n-2}) \rightrightarrows   
 \Ind_{\GL_{n-1}(\zz)}^{\GL_{n}(\zz)} \co_{2}(\ia_{n-1})
 \rarr
 \co_{2}(\ia_{n})
 }
\end{align*}
of $\GL_{n}(\zz)$-modules form a coequalizer diagram whenever $n > 9$.
\end{thmro}

\begin{rem}
 The $\atsi_{\GL(\zz)}$-module $\co_{2}(\ia)$ was shown to have central stability degree $\leq 38$ in \cite[Theorem A]{mpw-torelli-H2}. Theorem \ref{H2-IA} improves this.
\end{rem}

\subsection{Torelli subgroups of mapping class groups} \label{torelli-MCG}
\paragraph{Intersection form in general.} Let $m \in \zz_{\geq 1}$ and $\manif$ be a closed $(m-1)$-connected oriented $2m$-manifold with fundamental class $[\manif] \in \co_{2m}(\manif)$. The bilinear form 
\begin{align*}
 \co^{m}(\manif) \times \co^{m}(\manif)
 &\to \co_{0}(\manif) = \zz
 \\
 (\alpha, \beta) &\mapsto (\alpha \cup \beta) \cap [\manif]
\end{align*}
\begin{itemize}
 \item is $(-1)^{m}$-symmetric by the graded commutativity of cup product,
 \vspace{0.1 cm}
 \item is invariant under the diagonal right action $(\alpha,\beta) \cdot f \coloneqq (f^{*}\alpha,f^{*}\beta)$ of the topological group (with the compact-open topology \cite[Theorem 1.26]{kawakubo-transf})
\[
 \Homeo^{+}(\manif) \coloneqq 
 \left\{f \colon \manif \to \manif : \!\!\! \text{
\begin{tabular}{l}
 $f$ is an orientation-preserving
 \\
 homeomorphism
\end{tabular}
}
  \!\!\!\right\}
\] 
by properties of the cup and cap products,
\vspace{0.1cm}  
 \item is perfect by Poincar{\'{e}} duality and the universal coefficient theorem,
\vspace{0.1cm}  
 \item has a dual $(-1)^{m}$-symmetric perfect \textbf{intersection form}
 \[
  q_{\manif} \colon 
  \co_{m}(\manif) \times \co_{m}(\manif)
 \to \zz
 \]  
 which
\begin{itemize}
 \item is invariant under the diagonal left action $f \cdot (y,z) \coloneqq (f_{*}y, f_{*}z$) of $\Homeo^{+}(\manif)$,
 \item realizes the intersection numbers\footnote{In the surface case $m=1$, these are often called the \emph{algebraic} intersection numbers \cite[Section 1.2.3]{fm-mcg} of transverse oriented simple closed curves on the surface.} \cite[Definition 10.31]{davkir} of transverse closed oriented $m$-submanifolds when $\manif$ is smooth \cite[Theorem 10.32]{davkir}.
\end{itemize}
\end{itemize}

Thanks to the left action of $\Homeo^{+}(\manif)$, the (discrete) group
\[
 \Aut\!\big(\!
 \co_{m}(\manif), q_{\manif}
 \big) \coloneqq \left\{
 \vphi \colon \co_{m}(\manif) \to \co_{m}(\manif) :\!\!
\begin{tabular}{l}
 $\vphi$ is a group isomorphism with
 \\
 $\forall{y,\!z} \,\, q_{\manif}(\vphi(y),\vphi(z)) = q_{\manif}(y,z)$
\end{tabular}
 \!\!\right\}
\]
receives a homomorphism
\( \LL_{\manif} \colon \Homeo^{+}(\manif) \to
 \Aut\!\big(\!
 \co_{m}(\manif), q_{\manif}
 \big) \).

\paragraph{Surfaces and varying the genus.} 
In case $m=1$ and $\manif = \surf_{g}$ is a closed connected oriented surface of genus $g \in \zz_{\geq 0}$, we would like to to vary $g$ in the above setup. To that end, it is more convenient to work with 
\begin{itemize}
 \item the compact surface $\surf_{g}^{1}$ obtained by removing the interior of a closed disk in $\surf_{g}$ (so the boundary $\partial \surf_{g}^{1}$ is a circle), and
 \item the topological group (with the compact-open topology)
\[
 \Homeo_{\partial}^{+}\pars*{\surf_{g}^{1}} \coloneqq 
 \left\{f \colon \surf_{g}^{1} \to \surf_{g}^{1} : \!\!\! \text{
\begin{tabular}{l}
 $f$ is an orientation-preserving
 \\
 homeomorphism with $f|_{\partial \surf_{g}^{1}} = \id_{\partial\surf_{g}^{1}}$
\end{tabular}
}
  \!\!\!\right\} \, .
\]
\end{itemize}

Since $\Homeo_{\partial}^{+}(\surf_{g}^{1})$ embeds in $\Homeo^{+}(\surf_{g})$ via capping the missing disk, we have a map
\begin{align*}
 \LL_{g} \colon \Homeo^{+}_{\partial}\!\left(
 \surf_{g}^{1}\right) &\to
 \Aut\!\big(\!
 \co_{1}(\surf_{g}), q_{\surf_{g}} \big)
 \cong \Sp_{2g}(\zz)
\end{align*}
where the isomorphism with the symplectic group is via picking an appropriate basis for $\co_{1}(\surf_{g})$. That $\LL_{g}$ is surjective for every $g \geq 0$ is often attributed to \cite{burkhardt-surj}; alternatively, the proof for $\Homeo^{+}\!\left(\surf_{g}\right)$ in \cite[Theorem 2]{meeks-patrusky-circles} can be adapted to work for $\Homeo_{\partial}^{+}\!\left(
  \surf_{g}^{1}\right)$.
  
For every commutative ring $R$, the assignment $g \mapsto \Sp_{2g}(R)$, together with upper-left block inclusions, defines an $\atsi$-group $\Sp(R)$. For every $g \geq 2$, let $c_{g}$ be the $2g \times 2g$ (symplectic permutation) matrix obtained by first swapping the $(2g-3)$-rd row with the $(2g-1)$-st row, and then swapping $(2g-2)$-nd row with the $(2g)$-th row of the identity $2g \times 2g$ matrix. This way $(c_{g})$ is a tail-central sequence in $\Sp(R)$.
  
Via the \textbf{boundary connected sum} operation $\natural$ as described in \cite[Section 5.6]{rw-wahl-stab}, we can identify $\surf_{g+1}^{1} = \surf_{g}^{1} \natural \surf_{1}^{1}$ and get an embedding
\begin{align*}
  \Homeo_{\partial}^{+}\!\left(
  \surf_{g}^{1}\right) &\emb
  \Homeo_{\partial}^{+}\!\left(
  \surf_{g+1}^{1}\right)
  \\
  f &\mapsto f \natural \id
\end{align*}
of topological groups. Putting these together for all $g \geq 0$ encodes a topological $\atsi$-group 
\[
\Homeo_{\partial}^{+}\!\left(
  \surf^{1}\right) \colon \atsi \to \Top\Grp
\]
such that $\LL_{g}$'s patch into a (surjective) map $\LL \colon \Homeo_{\partial}^{+}\!\left(
  \surf^{1}\right) \to \Sp(\zz)$ of $\atsi$-groups. As $\LL$ is continuous with a discrete target, it descends to a (still surjective) map
\[
 \lambda \colon \Mod\pars*{\surf^{1}} \coloneqq 
 \pi_{0}\!\left(
 \Homeo^{+}_{\partial}\!\left(
 \surf^{1}\right)
 \right) \to
 \Sp(\zz)
\]
of $\atsi$-groups: we write $\torel^{1} \coloneqq \ker \lambda$. Here $\Mod(\surf_{g}^{1})$ is  the \textbf{mapping class group} of $\surf_{g}^{1}$, where $\torel_{g}^{1} \coloneqq \ker \lambda_{g}$ is its so-called \textbf{Torelli subgroup}.

\begin{thmro} \label{H2-torelli}  The $\atsi_{\Sp(\zz)}$-module $\co_{2}\!\left(\torel^{1}\right)$ has
\begin{itemize}
 \item surjective stability degree $\leq 7$, that is, the natural map
\begin{align*}
 \mathlarger{
 \Ind_{\Sp_{2g-2}(\zz)}^{\Sp_{2g}(\zz)} \co_{2}\!\left(
 \torel_{g-1}^{1}\right)
 \rarr
 \co_{2}\!\left(\torel_{g}^{1}\right)
 }
\end{align*}
of $\Sp_{2g}(\zz)$-modules is surjective whenever $g > 7$, and
 \vspace{0.1cm}
 \item central stability degree $\leq 9$, that is, the natural maps  
\begin{align*}
\mathlarger{
 \Ind_{\Sp_{2g-4}(\zz)}^{\Sp_{2g}(\zz)} \co_{2}\!\left(
 \torel_{g-2}^{1} \right) \rightrightarrows   
 \Ind_{\Sp_{2g-2}(\zz)}^{\Sp_{2g}(\zz)} \co_{2}\!\left(
 \torel_{g-1}^{1}
 \right)
 \rarr
 \co_{2}\!\left(\torel_{g}^{1}\right)
 }
\end{align*}
of $\Sp_{2g}(\zz)$-modules form a coequalizer diagram whenever $g > 9$.
\end{itemize}
\end{thmro}

\begin{rem}
The $\atsi_{\Sp(\zz)}$-module $\co_{2}(\torel^{1})$ was shown to have surjective stability degree $\leq 33$ in \cite[Theorem B']{mpw-torelli-H2}, and central stability degree $\leq 69$ in \cite[Theorem B]{mpw-torelli-H2}. Theorem \ref{H2-torelli} improves these.
\end{rem}

\begin{rem}
The rationalized $\atsi_{\Sp(\zz)}$-module $\co_{2}(\torel^{1}) \otimes \qq$ has surjective stability degree $\leq 6$ by \cite[Theorem 1.0.1]{boldsen-dollerup}. Its explicit evaluations for $g \geq 6$ have been recently computed in \cite[Theorem A]{putman-minahan-H2-torelli}.
\end{rem}

\subsection{Congruence subgroups of general linear groups} \label{section:cong-GL}
For every ring $R$ (with Convention \ref{ring-conv}) with a proper ideal $I$, the mod-$I$ reductions define a morphism  
\[
\rho \colon \GL(R) \to \GL(R/I)
\]
of $\atsi$-groups for which we write $\GL(R,I) \coloneqq \ker\rho$. The subgroup $\GL_{n}(R,I)$ of $\GL_{n}(R)$ is often called the $I$\textbf{-congruence subgroup} of $\GL_{n}(R)$.

One issue here is that $\rho$ is in general not surjective. Yet through permutation matrices, the image $\im(\rho)$ contains a copy of $\mathfrak{S}$ (Example \ref{symmetric-A}) in $\GL(R/I)$ so that the $\atsi$-module $\co_{k}(\GL(R,I))$ extends to an $\atsi_{\mathfrak{S}}$-module for every $k \geq 0$. The central stability of these $\atsi_{\mathfrak{S}}$-modules (with respect to the tail-central sequence of Example \ref{symm-tc}) under fairly general conditions was first established in \cite[Theorem B]{putman-congruence}. Later results such as \cite[Theorem 1.6]{cefn}, \cite[Theorem D']{ce-homology}, \cite[Application B]{cmnr-range}, \cite[Theorem 1]{gan-li-linear}, \cite[Theorem H]{bahran-reg} widened its scope and/or improved the central stability degrees involved. An important hypothesis in these is a bound on the (Bass) stable rank of the ring $R$,\footnote{However, note that there are finiteness results such as \cite[Th\'{e}or\`{e}me 2]{djament-congruence-stab} on these representations without any assumption on the stable rank of $R$.} which we briefly recall.
\paragraph{Stable rank.} A column vector $\mathbf{v} \in \mat_{m \times 1}(R)$ of size $m$ is \textbf{unimodular} if there is a row vector $\mathbf{u} \in \mat_{1 \times m}(R)$ such that $\mathbf{u}\mathbf{v} = 1$. Writing $\mathbf{I}_{r} \in \mat_{r \times r}(R)$ for the identity matrix of size $r$, we say
a column vector $\mathbf{v}$ of size $m$ is \textbf{reducible} if there exists  $\mathbf{B} \in \mat_{(m-1) \times m}(R)$ with block form  $\mathbf{B} = [\mathbf{I}_{m-1} \,|\, \mathbf{x}]$ such that the column vector $\mathbf{B} \mathbf{v}$ (of size $m-1$) is unimodular. We write $\sr(R) \leq s$ if every unimodular column vector of size $> s$ is reducible.

\begin{thm} \label{congr-sym-ranges}
 Let $I$ be a proper ideal in a ring $R$ (under Convention \ref{ring-conv}) such that $\sr(R) \leq s$. Then for every $k \geq 1$, as an $\atsi_{\mathfrak{S}}$-module $\co_{k}(\GL(R,I))$  has
\begin{itemize}
 \item surjective stability degree
 $
 \leq
 \begin{cases}
 s+1 & \text{if $k=1$,}
 \\
 2s+5 & \text{if $k=2$,}
 \\
 4k+2s-2 & \text{if $k\geq 3$,}
\end{cases}
$
\vspace{0,1cm}
 \item and central stability degree 
$ \leq
\begin{cases}
 s+3 & \text{if $k=1$,}
 \\
 2s+6 & \text{if $k=2$,}
 \\
 4k+2s-1 & \text{if $k \geq 3$.}
\end{cases}
$
\end{itemize}
\end{thm}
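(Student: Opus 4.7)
The plan is to view $M^{(k)} := \{\co_{k}(\GL_{n}(R,I))\}_{n}$ as an $\FI$-module via the standard inclusions $\sym{n} \hookrightarrow \GL_{n}(R)$ and the conjugation action, and to bound its presentation invariants $\tgen(M^{(k)})$ and $\trel(M^{(k)})$. These translate into the claimed stability degrees via the general machinery of \cite{bahran-reg}: the surjective stability degree is controlled by $\tgen$ and the central stability degree is controlled by a combination of $\tgen$ and $\trel$.

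The geometric input is the action of $\GL_{n}(R)$ on the semisimplicial complex of partial bases of $R^{n}$, which under $\sr(R) \leq s$ is connected up through order roughly $\lfloor (n-s-1)/2 \rfloor$ by Charney/van der Kallen. The associated isotropy spectral sequence for the restricted action of $\GL_{n}(R,I)$ has an $E^{1}$-page built out of the group homology of simplex stabilizers --- each being a semidirect product of $\GL_{n-p-1}(R,I)$ with a unipotent group of matrices with entries in $I$ --- and the compatibility of the $\sym{n}$-action with the simplex structure promotes the whole spectral sequence to one of $\FI$-modules.

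I would then induct on $k$. The base $k=1$ admits a direct linearization (Suslin-type identification of $\co_{1}(\GL_{n}(R,I))$ with a quotient controlled by $I/I^{2}$-type data), yielding $\tgen \leq s+1$ and $\trel \leq s+3$ immediately. For $k \geq 2$ the bounds are extracted from the spectral sequence, feeding in inductive control on $M^{(j)}$ for $j<k$ and handling the unipotent contribution to the stabilizers. The pattern $4k+2s-2$ for $k\geq 3$ reflects that each increment in homological degree advances the relevant presentation degree by $4$ (two for $p$ and two for $q$ when chasing generators and relations through differentials) while $s$ governs the starting range of acyclicity; the $k=2$ bound is slightly better because its induction feeds directly from the optimized $k=1$ base rather than from a generic recursion.

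The main obstacle is handling $\trel$ together with $\tgen$: the spectral sequence produces vanishing of a single $\FI$-hyperhomology quantity, so sharp control on the \emph{second} nonvanishing $\FI$-derived degree requires a careful analysis of higher differentials and of how generators in low degrees can enforce relations in high ones. The improvement over \cite[Theorem H]{bahran-reg} should come from exploiting that the unipotent factors in simplex stabilizers contribute no new $\sym{n}$-orbit types, keeping the $E^{1}$-page as $\FI$-thin as possible and preventing the bounds from bloating at each inductive step.
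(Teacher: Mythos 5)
Your proposal reconstructs an argument from first principles, whereas the paper's actual proof of Theorem \ref{congr-sym-ranges} is a one-line citation: the stated bounds follow directly by combining \cite[Theorem 4.15]{bahran-reg}, which bounds the local cohomology (equivalently, the presentation degrees $\tgen$, $\trel$) of the $\FI$-module $\co_k(\GL(R,I))$, with \cite[Proposition 2.4]{cmnr-range}, which converts those $\FI$-module invariants into surjective and central stability degrees. The framework you sketch --- isotropy spectral sequence for the complex of partial bases, induction on homological degree, translation through $\FI$-module presentation invariants --- is indeed the strategy behind the cited source \cite{bahran-reg}, so in outline you are re-deriving what the paper simply outsources. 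But the proposal never leaves the level of a sketch and does not reach the specific numbers, and several of the mechanisms you describe do not match how the argument actually runs: the $k=1$ bound does not come from a separate ``Suslin-type direct linearization'' but from the same spectral sequence apparatus as higher $k$, with better constants arising because $\co_0$ is the constant polynomial $\FI$-module of degree $0$; the stated worry about controlling the ``second nonvanishing $\FI$-derived degree'' via higher differentials is misplaced, since the regularity/local cohomology framework of \cite{bahran-reg} bounds $h^0$ and $h^1$ (hence $\tgen$ and $\trel$) in a single pass through the $E^2$-page, and \cite[Proposition 2.4]{cmnr-range} then handles the translation to the coequalizer statement; and the heuristic of a uniform advance of $4$ per homological degree does not by itself produce the claimed $k=2$ surjectivity bound $2s+5$ (as against the generic $4\cdot 2 + 2s - 2 = 2s+6$), which you attribute to an unspecified ``optimized base'' rather than an actual computation. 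In short, the direction is right but the proposal is not a proof, and the efficient route --- the one the paper takes --- is to cite the already-established $\FI$-module bounds together with the general translation lemma.
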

\begin{proof}
 This follows from \cite[Theorem 4.15]{bahran-reg} and \cite[Proposition 2.4]{cmnr-range}.
\end{proof}

%\begin{rem}
%Let $G_{0} \rarr G_{1} \rarr G_{2} \rarr \cdots $ be any sequence of groups such that for each $n \in \nn$ 
%\begin{itemize}
% \item there is a group homomorphism $\sym{n} \rarr G_{n}$,
% \item there is a $G_{n}$-action on $\co_{k}(\GL_{n}(R,I))$ that extends the $\sym{n}$-action,
% \item the sequence $\{\co_{k}(\GL_{n}(R,I))\}$ is $\{G_{n}\}$-consistent.
%\end{itemize}
%Then the surjective stability degree of $\{\co_{k}(\GL_{n}(R,I))\}$ as $G_{n}$-representations is at most the surjective stability degree as $\sym{n}$-representations. This is because the natural map 
%\begin{align*}
% \Ind_{G_{n-1}}^{G_{n}} \co_{k}(\GL_{n-1}(R,I)) \rarr
% \co_{k}(\GL_{n}(R,I))
%\end{align*}
%\end{rem}

\paragraph{Special linear group with respect to a subgroup of the unit group.} For a \textbf{commutative} ring $R$ and a subgroup $\fU \leq R^{\times}$, we write\footnote{We shall use the notation in \cite{putman-sam}, whereas in \cite{mpw-torelli-H2} and \cite{mpp-secondary} this group is denoted $\GL_{n}^{\fU}(R)$.}
\begin{align*}
 \SL_{n}^{\fU}(R) := \{f \in \GL_{n}(R) : \det(f) \in \fU\} \, ,
\end{align*}
so that as $\atsi$-groups, we interpolate between $\SL(R) \leq \SL^{\fU}(R) \leq \GL(R)$  as we vary $1 \leq \fU \leq R^{\times}$. Note that the tail-central sequence of permutation matrices in $\GL(R)$ is contained in $\SL^{\fU}(R)$ provided that $-1 \in \fU$.

\begin{rem} \label{mod-I-surj}
 Given a commutative ring $R$ with an ideal $I$, it is straightforward to check that if the mod-$I$ reduction $\SL(R) \rarr \SL(R/I)$ between the special linear $\atsi$-groups is surjective, then setting 
\begin{align*}
  \fU := \{x + I : x \in R^{\times}\} \, ,
\end{align*}
the image of $\GL(R) \to \GL(R/I)$ equals $\SL^{\fU}(R/I)$.
\end{rem}

\begin{rem}
 Assuming the mod-$I$ reduction $\SL(R) \rarr \SL(R/I)$ is surjective as in the situation of Remark \ref{mod-I-surj}, writing $L := \SL^{\fU}(R/I)$, the surjective stability degree of $\co_{k}(\GL(R,I))$ as an $\atsi_{\mathfrak{S}}$-module is an upper bound for the surjective stability degree of $\co_{k}(\GL(R,I))$ as an $\atsi_L$-module. This is because the natural map
\begin{align*}
 \Ind_{L_{n-1}}^{L_{n}} \co_{k}(\GL_{n-1}(R,I))
 \rarr
 \co_{k}(\GL_{n}(R,I))
\end{align*}  
factors through the natural map
\begin{align*}
 \Ind_{\sym{n-1}}^{\sym{n}} \co_{k}(\GL_{n-1}(R,I))
 \rarr
 \co_{k}(\GL_{n}(R,I)) \, .
\end{align*}
The central stability degrees on the other hand are not as directly related.  
\end{rem}

\begin{thmro} \label{congruence-larger-action}
 Let $I$ be a proper ideal in a commutative ring $R$ and $s \geq 1$ such that 
\begin{itemize}
 \item $
 \sr(R) \leq s  
$, and
 \item the mod-$I$ reduction  
$\SL_{n}(R) \rarr \SL_{n}(R/I)$ is surjective for every $n \geq 0$. 
\end{itemize}
Then writing 
\begin{align*}
 \fU := \{x + I : x \in R^{\times}\} \leq (R/I)^{\times}
 \quad \text{and} \quad
 L_{n} := \SL_{n}^{\fU}(R/I) \, ,
\end{align*}
for every $k \geq 1$ the $\atsi_{L}$-module $\co_{k}(\GL(R,I))$ has central stability degree
\begin{align*}
 \leq C(k,s) :=
\begin{cases}
 s+5 & \text{if $k=1$,}
 \\
 2s+6 & \text{if $k=2$,}
 \\
 4k+2s-1 & \text{if $k \geq 3$,}
\end{cases}
\end{align*}
that is, the natural maps  
\begin{align*}
\mathlarger{
 \Ind_{L_{n-2}}^{L_{n}} \co_{k}(\GL_{n-2}(R,I)) \rightrightarrows   
 \Ind_{L_{n-1}}^{L_{n}} \co_{k}(\GL_{n-1}(R,I))
 \rarr
 \co_{k}(\GL_{n}(R,I))
 }
\end{align*}
of $L_{n}$-modules form a coequalizer diagram whenever $n > C(k,s)$.
\end{thmro}

\begin{rem}
Under the assumptions and notation of Theorem \ref{congruence-larger-action}, the best bound for the central stability degree previously established in the literature was
\begin{align*}
\leq
\begin{cases}
 2s + 4 & \text{if $k =1$,}
 \\
 4k + 2s + 1 & \text{if $k \geq 2$,}
\end{cases}
\end{align*}  
from \cite[Theorem D]{bahran-polynomial}. Theorem \ref{congruence-larger-action} slightly improves this.
\end{rem}

\subsection{Some pointers}
The main technical result in this paper is Theorem \ref{outsource}. The title of Section \ref{section:mpw-method} is an apt summary for what Theorem \ref{outsource} \textbf{does}: under certain assumptions, polynomiality in degree $< k$ homology implies central stability at degree $k$ homology. 

On one hand, the proof of Theorem \ref{outsource} is ``just'' obtained by combining material already present \cite{mpw-torelli-H2} and \cite{mpp-secondary}: we run the spectral sequence argument of \cite{mpw-torelli-H2} where instead of the exponential vanishing ranges in \cite[Theorem 3.26]{mpw-torelli-H2}, we use the linear vanishing ranges in \cite[Theorem 3.11]{mpp-secondary}. 

On the other hand, the background to even rigorously state Theorem \ref{outsource} is substantial. While a previous version of this paper made several precise inline references to the above papers and \cite{patzt-central} in its proofs to avoid reintroducing some of this background, the current version has several sections and appendices for the involved concepts (hence the explosion in the number of pages). Although the applications are still confined to the ``stability groupoid'' framework originating from \cite[Section 3]{patzt-central}, some of the concepts are introduced in a more general setup than this, which I find clarifying and could be of separate interest. For example, the simple notion of a category $\ce$ equipped with a shifting context (Definition \ref{defn:shifting-context}) is sufficient to afford both polynomial behavior (Definition \ref{poly-defn}) and a ``stability'' $\ce$-homology theory (Section \ref{sec-omega} and Definition \ref{C-homology}) for $\ce$-modules.

Setting the ``braided lifts'' aside, the main hypotheses of Theorem \ref{outsource} are the $\mathbf{H3}$ acyclicity property (Definition \ref{H3-defn}) and polynomiality (Definition \ref{poly-defn}). To see how exactly these hypotheses are invoked for our main results, one can look at the bullet points in Section \ref{main-proofs}.

\section{Categorical notions} \label{categorical}
\begin{conv}
 Throughout this section, $\ce, \de$ are categories such that 
\begin{itemize}
 \item $\ce$ has a small skeleton so that the isomorphism classes of objects form a set $\Iso(\ce)$,
 \item $\de$ has a zero object (and hence zero morphisms).
\end{itemize}
We write $[\ce,\de]$ for the category of functors $\ce \to \de$.
\end{conv}

\paragraph{Degree.}  Given a function $\rk \colon \Iso(\ce) \to \zz_{\geq 0}$, for every functor $V \colon \ce \to \de$ we write
\begin{align*}
 \deg V &\coloneqq \min\!\big\{
 N \in \zz_{\geq - 1} \cup \{\infty\}\,:\, V_{x} = 0 \text{ for every $x \in \Obj(\ce)$ with} \rk(x) > N
 \big\} \, ,
\end{align*}
called the \textbf{degree} of $V$.  

Before moving on with similarly general definitions, let us note that degrees behave as one might expect in a spectral sequence.

\begin{lem} \label{sseq-deg}
 Suppose $\de$ is an abelian category and $\rk \colon \Iso(\ce) \to \zz_{\geq 0}$ is a function so that degrees of functors $\ce \to \de$ are defined as above. If 
 \[ E_{2}^{p,q} \imp V^{p+q} \]  is a cohomologically graded spectral sequence in $[\ce,\de]$ and $p_{0} \in \zz$ such that
 \[
  E^{2}_{p,q} = 0 \text{ when } p < p_{0} \text{ or } q<0 \, ,
 \]
then for every $k \in \zz_{\geq 0}$ we have
\begin{align*}
 \deg E^{2}_{p_{0},k}
 &\leq \max\!\left(\{\deg E^{\infty}_{p_{0},k}\} \cup \left\{
 \deg E^{2}_{p_{0} + k + 1 - q,q} : 0\leq q < k
 \right\} 
 \right)
\end{align*}
and
\begin{align*}
 \deg E^{2}_{p_{0}+1,k}
 &\leq \max\!\left(\{\deg E^{\infty}_{p_{0},k}\} \cup \left\{
 \deg E^{2}_{p_{0} + k + 2 - q,q} : 0\leq q < k
 \right\} 
 \right) \, .
\end{align*}
\end{lem}
\begin{proof}
 For every $V \colon \ce \to \de$, let us write
\[
 \supp(V) \coloneqq \kume*{x \in \Iso(\ce) : V_{x} \neq 0} \subseteq \Iso(\ce) \, .
\] 
To prove the first inequality, let  
\(d \coloneqq \deg E^{\infty}_{p_{0},k}\).
Fix $x \in \supp\pars*{E^{2}_{p_{0},k}}$ with $\rk(x) > d$.
By the definition of degree, we have
\(
 \pars*{E^{\infty}_{p_{0},k}}_{\!x} = 0 \, .
\)
Therefore there exists a page $r \in \zz_{\geq 2}$ such that
\[
 \pars*{E^{r}_{p_{0},k}}_{\!x} \neq 0 \quad \text{and} \quad 
 \pars*{E^{r+1}_{p_{0},k}}_{\!x} = 0 \, .
\]
Since $0 = \pars*{E^{r+1}_{p_{0},k}}_{\!x}$ is the homology of
\[
 0 = \pars*{E^{r}_{p_{0}-r,k-1+r}}_{\!x} \to
 \pars*{E^{r}_{p_{0},k}}_{\!x} \to
 \pars*{E^{r}_{p_{0} + r,k+1-r}}_{\!x} \, ,
\]
we get
\[
 \pars*{E^{r}_{p_{0} + r,k+1-r}}_{\!x} \neq 0 \quad \text{so that} 
 \quad x \in \supp\pars*{E^{r}_{p_{0} + r,k+1-r}} \, .
\]
Note that this $r$ has to satisfy $k+1-r \geq 0$, hence
\[
 x \in \bigcup_{r = 2}^{k+1} \supp\pars*{E^{r}_{p_{0} + r,k+1-r}}
 = \bigcup_{q=0}^{k-1} \supp\pars*{E^{r}_{p_{0} + k+1-q,q}} \, .
\]
As $x$ was arbitrary, we get
\[
 \kume*{x \in \supp\pars*{E^{2}_{p_{0},k}} : \rk(x) > d} \subseteq 
 \bigcup_{q=0}^{k-1} \supp\pars*{E^{r}_{p_{0} + k+1-q,q}} \, .
\]
Writing $d_{q} \coloneqq \deg E^{r}_{p_{0} + k+1-q,q}$ for every $q \in \{0,\dots, k-1\}$, if
\[
 x \in \Obj(\ce) \quad \text{with} \rk(x) > \max\{d_{0}, \dots ,d_{k-1}\} \, ,
\]
then $\pars*{E^{r}_{p_{0} + k+1-q,q}}_{\!x} = 0$ for every $q \in \{0,\dots, k-1\}$, that is,
\[
 x \in \Iso(\ce) - \bigcup_{q=0}^{k-1} \supp\pars*{E^{r}_{p_{0} + k+1-q,q}} \, .
\]
Hence 
\[
 x \in \Iso(\ce) - \kume*{x \in \supp\pars*{E^{2}_{p_{0},k}} : \rk(x) > d} \, ,
\]
that is,
\[
 \pars*{E^{2}_{p_{0},k}}_{\!x} = 0 \quad \text{or} \quad \rk(x) \leq d \, .
\]
Consequently, whenever $\rk(x) > \max\{d, d_{0}, \dots ,d_{k-1}\}$, we have $\pars*{E^{2}_{p_{0},k}}_{\!x} = 0$. By the definition of degree, we get
\[
 \deg E^{2}_{p_{0},k} \leq \max\{d, d_{0}, \dots ,d_{k-1}\}
\]
as desired. The proof of the second inequality is similar.
\end{proof}

\begin{defn} \label{defn:shifting-context}
 A \textbf{shifting context} on $\ce$ is a pair $(\sigma, \eta)$ where 
\begin{itemize}
 \item $\sigma \colon \ce \to \ce$ is a functor,
 \item $\eta \colon \id_{\ce} \to \sigma$ is a natural transformation.
\end{itemize}
\end{defn}

%If $x$ is an object in a category $\ce$ equipped with a shifting context $(\sigma,\eta)$, we get a group homomorphism
%\begin{align*}
%  \Aut_{\ce}(x) &\to \Aut_{\ce}(\sigma(x)) \\
%  f &\mapsto \sigma(f) \, ,
%\end{align*}
%and we can iterate this procedure to get an $\atsi$-group
%\begin{align*}
%  G(x) \colon \atsi &\to \Grp
%  \\
%  n &\mapsto \Aut_{\ce}(\sigma^{n}(x)) \, .
%\end{align*}
%We can now form the category $\atsi_{G(x)}$ as in Definition \ref{grothendieck-const} which we claim can be mapped backed to $\ce$: we define a functor
%\[
% I(x) \colon \atsi_{G(x)} \to \ce
%\]
%via as follows: 
%\begin{itemize}
% \item For every $n \in \zz_{\geq 0}$, set $I(x)_{n} \coloneqq \sigma^{n}(x)$.
% \item For every morphism $(\alpha,m,n) \colon m \to n$ in $\atsi_{G(x)}$, so $m \leq n$ and $\alpha \in \Aut_{\ce}(\sigma^{n}(x))$, set the morphism $I(x)_{(\alpha,m,n)} \colon \sigma^{m}(x) \to \sigma^{n}(x)
%$ to be the composite
%\[
% \sigma^{m}(x) \xrightarrow{\eta} \cdots \xrightarrow{\eta}
% \sigma^{n}(x) \xrightarrow{\alpha} \sigma^{n}(x) \, .
%\]
%Here we are abusing the notation in that the first arrow above should be denoted $\eta_{\sigma^{m}(x)} \colon \sigma^{m}(x) \to \sigma^{m+1}(x)$ and so on.
%\end{itemize}
%For $(\alpha,m,n) \colon m \to n$ and $(\beta,n,p) \colon n \to p$ in $\atsi_{G}$, 
% \[
%  (\beta,n,p) \circ (\alpha,m,n) = \left(
%  \beta \cdot G_{(n,p)}(\alpha),m,p\right) \, .
% \]

\paragraph{The shift functor.} If $\ce$ is a category equipped with a shifting context $(\sigma, \eta)$, we write 
\begin{align*}
   \shift{}{} \colon [\ce, \de] &\to [\ce, \de]
   \\
   V &\mapsto V \circ \sigma
\end{align*} 
for the functor defined by precomposing with $\sigma$ and call it the \textbf{shift} functor. For every $V \in [\ce, \de]$, the natural transformation $\eta$ induces a morphism
\[
 \eta V \colon V \to \shift{V}{}
\]
in $[\ce, \de]$.

\paragraph{The kernel and derivative functors.} For every $V \in [\ce,\de]$, we write $\kert V$ and $\deriv V$ for the equalizer and coequalizer (if they exist) of the parallel morphisms
\[
 \eta V,0 \colon V \to \shift{V}{}
\] 
respectively. In other words $\kert V = \ker \eta V$ and $\deriv V = \coker \eta V$.

\begin{defn} \label{poly-defn}
 Let $\ce$ be a category with a small skeleton equipped with a function $\rk \colon \Iso(\ce) \to \zz_{\geq 0}$ and a shifting context $(\sigma, \eta)$. Let $\de$ be a complete and cocomplete category with a zero object so that we have a well-defined notion of degree and functors
\[
 \shift{}{}, \kert, \deriv \colon [\ce,\de] \to [\ce,\de]
\] 
via the above. Given 
 \(V \colon \ce \to \de\)
and integers $r \geq -1$, $M \geq 0$, we say $V$ is \textbf{polynomial of degree $r$ starting at $M$} if it satisfies the following recursion:
\begin{itemize}
 \item If $r = -1$, then $\deg V < M$.
 \item If $r \geq 0$, the following hold:
\begin{itemize}
 \item $\kert V$ is polynomial of degree $-1$ starting at $M$.
 \item $\deriv V$ is polynomial of degree $r-1$ starting at $\max\{0, M-1\}$. 
\end{itemize}
\end{itemize}
\end{defn}

\begin{defn} \label{endo-to-A-grp}
 Given a functor $\sigma \colon \ce \to \ce$ (not necessarily part of a shifting context) and $c \in \Obj(\ce)$, we write $\ce(c,\sigma)$ for the $\atsi$-group defined as 
 \[
  \ce(c,\sigma)_{n} \coloneqq \Aut_{\ce}(\sigma^{n}c)
 \]
with the transition maps
\begin{align*}
  \ce(c,\sigma)_{(n,n+1)} \colon \Aut_{\ce}(\sigma^{n}c) &\to \Aut_{\ce}(\sigma^{n+1}c) 
  \\
  f &\mapsto \sigma f
\end{align*}
 for every $n \in \zz_{\geq 0}$.
\end{defn}

\subsection{Monoidal notions}
 We denote a monoidal category additively as $(\me,\oplus,0)$ where $\me$ is the underlying category, $\oplus \colon \me \times \me \to \me$ is the bifunctor and $0$ the unit object (whereas most sources such as \cite[Definition 2.1.1]{tensor-cat-book} use the multiplicative $\otimes$ notation).
 
\begin{conv} \label{tekman}
 Although most of the actual monoidal categories we shall deal with, such as $\OI$ (Section \ref{semi-simp}), $\FI$ (Appendix \ref{poly-FI}), $\VIC(R)$ (Definition \ref{su-pbc-defn}), $\SI(R)$ (Definition \ref{hu-defn}), are not \textbf{strict} as in \cite[Definition 2.8.1]{tensor-cat-book}, thanks to Mac Lane's strictness theorem \cite[Definition 2.8.5]{tensor-cat-book} we freely assume an arbitrary $(\me,\oplus,0)$ to be strict.
\end{conv}
 
\begin{defn}[{\cite[Definition 2.4.3]{tensor-cat-book}} simplified via Convention \ref{tekman}] \label{monoidal-F}
 Let $(\me,\oplus,0)$ and $(\me^{\wr}, \oplus^{\wr}, 0^{\wr})$ be monoidal categories. A \textbf{monoidal functor} from $\me$ to $\me^{\wr}$ is a triple $(F,J,\psi)$ where $F \colon \me \to \me^{\wr}$ is a functor and $(J,\psi)$ (the coherence data) is the datum of
\begin{itemize}
 \item a collection 
 \(
  \{J_{X,Y} \colon F(X) \oplus^{\wr} F(Y) \to F(X \oplus Y)\}
 \)
 of isomorphisms natural in $X,Y$,
 \item an isomorphism $\psi \colon F(0) \to 0^{\wr}$,
\end{itemize}
subject to the commutativity of the diagram
 \[
 \xymatrixcolsep{2.4cm}
 \xymatrixrowsep{1.4cm} 
 \xymatrix{
 F(X) \oplus^{\wr} F(Y) \oplus^{\wr} F(Z) 
 \ar[r]^-{J_{X,Y} \,\oplus^{\wr}\, \id_{F(Z)}} 
 \ar[d]_-{\id_{F(X)} \oplus^{\wr} J_{Y,Z}} 
 &
 F(X \oplus Y) \oplus^{\wr} F(Z) 
 \ar[d]^-{J_{X \oplus Y, Z}}  
 \\
 F(X) \oplus^{\wr} F(Y \oplus Z) 
 \ar[r]^-{J_{X,Y \oplus Z}}  
 & F(X \oplus Y \oplus Z)
 }
 \]
 and the equations
\begin{align*}
 \psi \oplus^{\wr} \id_{F(X)} &= J_{0,X} \, ,
 \\
 \id_{F(X)} \oplus^{\wr}\, \psi &= J_{X,0}
\end{align*}
for every $X,Y,Z \in \Obj(\me)$.
\end{defn}

%\begin{rem}
% The reader might wonder why we have not settled with ``strict monoidal functors'' that posit $F(X) \oplus^{\wr} F(Y) = F(X \oplus Y)$ in Definition \ref{monoidal-F}, in other words the coherence data is formed by the identity maps.
%\end{rem}

\begin{rem} \label{moncat-comp}
 Given monoidal categories $(\me,\oplus,0)$, $(\me^{\wr}, \oplus^{\wr}, 0^{\wr})$, $(\me^{\square}, \oplus^{\square}, 0^{\square})$ and monoidal functors
\[
 (\me,\oplus,0) \xrightarrow{(F,J,\psi)} (\me^{\wr},\oplus^{\wr},0^{\wr}) \xrightarrow{(H,L,\eta)} (\me^{\square},\oplus^{\square},0^{\square}) \, ,
\]
we compose them as
\[
 (H,L,\eta) \circ (F,J,\psi) \coloneqq (H\circ F,W,\theta)
\]
where $W_{X,Y}$ is the composite
\[
 H(F(X)) \oplus^{\square} H(F(Y)) \xrightarrow{L_{F(X),F(Y)}} 
 H\pars*{F(X) \oplus^{\wr} F(Y)} \xrightarrow{H\pars*{J_{X,Y}}} H(F(X \oplus Y))
\]
for every $X,Y \in \Obj(\me)$, and $\theta$ is the composite $H(F(0)) \xrightarrow{H(\psi)} H(0^{\wr}) \xrightarrow{\eta} 0^{\square}$. Together with the evident identity morphisms, they make the monoidal categories and monoidal functors a category for which we write $\moncat$.
\end{rem}

\begin{defn} \label{defn:mon-opp}
 Given a monoidal category $\me = (\me, \oplus, 0)$, we write $\me^{\circ} \coloneqq (\me, \oplus^{\circ}, 0)$ for the monoidal category with the same underlying category but with the monoidal operation reversed, that is, $A \oplus^{\circ}\! B$ in $\me^{\circ}$ is $B \oplus A$ in $\me$.
\end{defn}

\begin{rem} \label{op-functor}
 Given  monoidal categories $(\me,\oplus,0)$, $(\me^{\wr}, \oplus^{\wr}, 0^{\wr})$ and a monoidal functor $\mathbf{F} = (F,J,\psi) \colon \me \to \me^{\wr}$, we can write
 \(
  \mathbf{F}^{\circ} \coloneqq (F,J^{\circ},\psi)
 \) 
where 
\[
 J^{\circ}_{X,Y} \coloneqq J_{Y,X} \colon    F(X)\, {\oplus^{\wr}}^{\circ} F(Y) = F(Y) \oplus^{\wr} F(X) \to F(Y \oplus X) = F(X \oplus^{\circ} Y)
\]
for every $X,Y \in \Obj(\me)$. This way, $\mathbf{F}^{\circ} \colon \me^{\circ} \to {\me^{\wr}}^{\circ}$ is a monoidal functor and taking monoidal opposites is itself a functor
\vspace{-0.2cm}
\[
 \moncat \xrightarrow{{^{\circ}}} \moncat  \, .
\]
\end{rem}

\begin{defn}[{\cite[Definition 8.1.1]{tensor-cat-book}} simplified via Convention \ref{tekman}] \label{bm-cat}
 Let $(\me,\oplus,0)$ be a monoidal category. A natural isomorphism $\tau \colon \oplus \to \oplus^{\circ}$ between the functors
\(
 \oplus, \oplus^{\circ} \colon \me \times \me \to \me
\), that is, a collection 
\[
 \{\tau_{X,Y} \colon X \oplus Y \to Y \oplus X : X,Y \in \Obj(\me)\}
\]
 of isomorphisms in $\me$ which are natural in $X,Y$, is called a \textbf{braiding} on $\me$ if for every $X,Y,Z \in \Obj(\me)$ the diagrams
\[
 \xymatrixcolsep{2cm} 
 \xymatrixrowsep{1.5cm}  
 \xymatrix{
 X \oplus Y \oplus Z \ar[rr]^-{\tau_{X, Y \oplus Z}} \ar[dr]_-{\tau_{X,Y} \oplus \id_{Z}} &  &Y \oplus Z \oplus X \\
 & Y \oplus X \oplus Z \ar[ur]_-{\,\,\,\,\,\,\id_{Y} \oplus \tau_{X,Z}} &
 }
 \]
 and
 \[
 \xymatrixcolsep{2cm} 
 \xymatrixrowsep{1.5cm}  
 \xymatrix{
 X \oplus Y \oplus Z \ar[rr]^-{\tau_{X \oplus Y, Z}} \ar[dr]_-{\id_{X} \oplus \tau_{Y,Z}} &  
 &Z \oplus X \oplus Y \\
 & X \oplus Z \oplus Y \ar[ur]_-{\,\,\,\,\,\,\tau_{X,Z} \oplus \id_{Y}} &
 }
 \] 
commute. A quadruple $(\me,\oplus,0,\tau)$ as above is called a \textbf{braided monoidal category}.

If a braiding  $\tau \colon \oplus \to \oplus^{\circ}$ on $\me$ satisfies $\tau^{\circ} = \tau^{-1}$, where $\tau^{\circ} \colon \oplus^{\circ} \to \oplus$ is given by $\tau^{\circ}_{X,Y} \coloneqq \tau_{Y,X}$, then $\tau$ is called \textbf{symmetric}. A braided monoidal category $(\me,\oplus,0,\tau)$ with a symmetric braiding $\tau$ is called a \textbf{symmetric monoidal category}.
\end{defn}

\begin{rem} \label{braided-opp}
 Given a braided monoidal category $(\me,\oplus,0,\tau)$, the collection
\[
 \{\tau^{\circ}_{X,Y} \colon X \oplus^{\circ} Y \to Y \oplus^{\circ} X : X,Y \in \Obj(\me)\}
\] 
defined as $\tau^{\circ}_{X,Y} \coloneqq \tau_{Y,X}$ defines a braiding $\tau^{\circ}$ on the monoidal category $\me^{\circ} = (\me,\oplus^{\circ},0)$, resulting in a braided monoidal category for which we again write $\me^{\circ} = (\me,\oplus^{\circ},0, \tau^{\circ})$.
\end{rem}

\begin{defn}[{\cite[Definition 8.1.1]{tensor-cat-book}}] \label{bm-functor}
Let $(\me,\oplus,0,\tau)$ and $(\me^{\wr}, \oplus^{\wr}, 0^{\wr}, \tau^{\wr})$ be braided monoidal categories. A monoidal functor 
\[
 (F,J,\psi) \colon (\me,\oplus,0) \to (\me^{\wr},\oplus^{\wr},0^{\wr})
\]
as in Definition \ref{monoidal-F} is called \textbf{braided} if the diagram
\[
 \xymatrixcolsep{2cm}
 \xymatrixrowsep{1.2cm} 
 \xymatrix{
 F(X) \oplus^{\wr} F(Y) \ar[r]^-{J_{X,Y}} \ar[d]_-{\tau^{\wr}_{F(X),F(Y)}}
 & F(X \oplus Y) \ar[d]^{F(\tau_{X,Y})}
 \\
 F(Y) \oplus^{\wr} F(X) \ar[r]^-{J_{Y,X}}
 & F(Y \oplus X)
 }
\]
commutes for every $X,Y \in \Obj(\me)$. Braided monoidal categories together with monoidal functors between them which are braided in this sense make up the category $\bmcat$.
\end{defn}

\begin{prop}[{\cite[Exercise 8.1.9]{tensor-cat-book}}] \label{exer}
 Let $(\me,\oplus,0,\tau)$ be a braided monoidal category. Then the triple $(\id_{\me},\tau^{-1}, \id_{0})$ defines a braided monoidal functor $\me \to \me^{\circ}$, with $(\id_{\me},\tau, \id_{0}) \colon \me^{\circ} \to \me$ its inverse braided monoidal functor.
\end{prop}
%\begin{proof}
% Straightforward.
%\end{proof}

\begin{prop} \label{braid-op-iso}
 The endofunctor $\moncat \xrightarrow{{^{\circ}}} \moncat$ from Remark \ref{op-functor} lifts to an endofunctor
\[
 \bmcat \xrightarrow{{^{\circ}}} \bmcat \, ,
\] 
which is naturally isomorphic to $\id_{\bmcat}$ through the isomorphisms 
\[\pmb{\tau}_{\me} \coloneqq (\id_{\me}, \tau, \id_{0}) \colon \me^{\circ} \to \me\] from Proposition \ref{exer}.
\end{prop}
\begin{proof}
 Note that whether a monoidal functor between braided monoidal categories is braided or not is a property of the functor (not an extra structure). Hence it suffices to show for the first claim that
given $(\me,\oplus,0,\tau)$ and $(\me^{\wr}, \oplus^{\wr}, 0^{\wr}, \tau^{\wr})$ braided monoidal categories and a monoidal functor 
\[
 \mathbf{F} = (F,J,\psi) \colon (\me,\oplus,0) \to (\me^{\wr},\oplus^{\wr},0^{\wr})
\]
which is braided (with respect to $\tau$ and $\tau^{\wr}$), then $\mathbf{F}^{\circ} =  (F,J^{\circ},\psi)$ is also braided (with respect to $\tau^{\circ}$ and ${\tau^{\wr}}^{\circ}$ from Remark \ref{braided-opp}). This ammounts to the commutativity of the diagram
\[
 \xymatrixcolsep{2cm}
 \xymatrixrowsep{1.2cm} 
 \xymatrix{
 F(X) \,{\oplus^{\wr}}^{\circ} F(Y) \ar[r]^-{J^{\circ}_{X,Y}} \ar[d]_-{{\tau^{\wr}}^{\circ}_{F(X),F(Y)}}
 & F(X \oplus^{\circ} Y) \ar[d]^{F\pars*{\tau^{\circ}_{X,Y}}}
 \\
 F(Y) \,{\oplus^{\wr}}^{\circ} F(X) \ar[r]^-{J^{\circ}_{Y,X}}
 & F(Y \oplus^{\circ} X)
 }
\]
for every $X,Y \in \Obj(\me)$, which follows from the commutativity of
\[
 \xymatrixcolsep{2cm}
 \xymatrixrowsep{1.2cm} 
 \xymatrix{
 F(Y) \oplus^{\wr} F(X) \ar[r]^-{J_{Y,X}} \ar[d]_-{\tau^{\wr}_{F(Y),F(X)}}
 & F(Y \oplus X) \ar[d]^{F\pars*{\tau_{Y,X}}}
 \\
 F(X) \oplus^{\wr} F(Y) \ar[r]^-{J_{X,Y}}
 & F(X \oplus Y) \, .
 }
\]
For the naturality statement, we are to show that given a monoidal functor 
\[
 \mathbf{F} = (F,J,\psi) \colon (\me,\oplus,0) \to (\me^{\wr},\oplus^{\wr},0^{\wr}) \, ,
\]
the diagram
\[
 \xymatrixcolsep{1.5cm}
 \xymatrixrowsep{1cm} 
 \xymatrix{
 \me^{\circ} \ar[r]^-{\mathbf{F}^{\circ}} \ar[d]_-{\pmb{\tau}_{\me}}
 & {\me^{\wr}}^{\circ} \ar[d]^{\pmb{\tau}_{\me^{\wr}}}
 \\
 \me \ar[r]^-{\mathbf{F}}
 & \me^{\wr}
 }
\]
in $\bmcat$ commutes. According to Remark \ref{moncat-comp}, we have
\begin{align*}
 \pmb{\tau}_{\me^{\wr}} \circ \mathbf{F}^{\circ} 
 &= \pars*{\id_{\me^{\wr}}, \tau^{\wr}, \id_{0^{\wr}}} \circ (F,J^{\circ},\psi) 
 \\
 &= \pars*{\id_{\me^{\wr}} \circ F,W,\theta} = \pars*{F,W,\theta} 
\end{align*}
where 
\begin{itemize}
 \item for every $X,Y \in \Obj(\me)$, the morphism $W_{X,Y}$ is the composite
\[
 F(X) \oplus^{\wr} F(Y) \xrightarrow{\tau^{\wr}_{F(X),F(Y)}} 
 F(X) {\oplus^{\wr}}^{\circ} F(Y) \xrightarrow{J^{\circ}_{X,Y}} F(X \oplus^{\circ} Y) \, ,
\]
 equivalently the composite
\[
 F(X) \oplus^{\wr} F(Y) \xrightarrow{\tau^{\wr}_{F(X),F(Y)}} 
 F(Y) \oplus^{\wr} F(X) \xrightarrow{J_{Y,X}} F(Y \oplus X) \, ,
\]
\item  and $\theta$ is the composite $F(0) \xrightarrow{\psi} 0^{\wr} \xrightarrow{\id_{0^{\wr}}} 0^{\wr}$, that is $\theta = \psi$.
\end{itemize}
Again according to Remark \ref{moncat-comp}, we have
\begin{align*}
 \mathbf{F} \circ \pmb{\tau}_{\me} &= (F,J,\psi) \circ (\id_{\me}, \tau, \id_{0})
 \\
 &= \pars*{F \circ \id_{\me}, \ov{W}, \ov{\theta}} = \pars*{F, \ov{W}, \ov{\theta}}
\end{align*}
where
\begin{itemize}
 \item for every $X,Y \in \Obj(\me)$, the morphism $\ov{W}_{\!X,Y}$ is the composite
\[
 F(X) \oplus^{\wr} F(Y) \xrightarrow{J_{X,Y}} 
 F\pars*{X \oplus Y} \xrightarrow{F\pars*{\tau_{X,Y}}} F(X \oplus^{\circ} Y) = F(Y \oplus X) \,,
\] 
\item and $\ov{\theta}$ is the composite $F(0) \xrightarrow{F(\id_{0})} F(0) \xrightarrow{\psi} 0^{\wr}$, that is $\ov{\theta} = \psi$.
\end{itemize}
Here, as $\mathbf{F} = (F,J,\psi)$ is braided, we have
\[
 W_{X,Y} = J_{Y,X}\circ \tau^{\wr}_{F(X),F(Y)} = F(\tau_{X,Y}) \circ J_{X,Y} = \ov{W}_{\!X,Y}
\]
for every $X,Y$, and hence
\[
 \pmb{\tau}_{\me^{\wr}} \circ \mathbf{F}^{\circ} = \pars*{F,W,\theta} = \pars*{F,W,\psi}
 = \pars*{F,\ov{W},\ov{\theta}} = \mathbf{F} \circ \pmb{\tau}_{\me}
\]
as desired.
\end{proof}
\subsubsection{$\atsi$-groups out of monoidal categories}
\begin{defn} \label{monoidal-to-A-grp}
Let $(\me,\oplus,0)$ be a monoidal category and $c,x \in \Obj(\me)$. We shortly write $\me(c,x)$ for the $\atsi$-group $\me\pars*{c,\,-\oplus \id_{x}}$ defined in Definition \ref{endo-to-A-grp}: spelled out at the level of objects, we have
 \[
  \me(c,x)_{n} = \Aut_{\me}\pars*{c \oplus x^{\oplus n}} \, .
 \] 
\end{defn}

\begin{rem} \label{Gcx-maps}
 Let $\mathbf{F} = (F,J,\psi) \colon (\me,\oplus,0) \to (\me^{\wr},\oplus^{\wr},0^{\wr})$ be a monoidal functor between monoidal categories as in Definition \ref{monoidal-F}. Given
\begin{itemize}
 \item $c,x \in \Obj(\me)$,
 \item $c^{\wr}, x^{\wr} \in \Obj(\me)$,
 \item isomorphisms $\gamma \colon F(c) \to c^{\wr}$ and $\xi \colon F(x) \to x^{\wr}$ in $\me^{\wr}$,
\end{itemize}
we may define a map $\mathbf{F}[\gamma,\xi] \colon \me(c,x) \to \me^{\wr}(c^{\wr},x^{\wr})$ of $\atsi$-groups inductively as follows:
\begin{birki}
 \item For $n=0$, we define the group homomorphism
\begin{align*}
 \mathbf{F}[\gamma,\xi]_{0} \colon \Aut_{\me}(c) &\to \Aut_{\me^{\wr}}(c^{\wr})
 \\
 f &\mapsto \gamma \circ F(f) \circ \gamma^{-1} \, .
\end{align*}
 \item For $n \in \zz_{\geq 1}$, noting that $(\gamma \oplus^{\wr} \xi) \circ J_{c,x}^{-1} \colon F(c \oplus x) \to c^{\wr} \oplus^{\wr} x^{\wr}$ is an isomorphism in $\me^{\wr}$, we define
\begin{align*}
 \mathbf{F}[\gamma,\xi]_{n} \coloneqq \mathbf{F}\!\left[
 (\gamma \oplus^{\wr} \xi) \circ J_{c,x}^{-1},\,\xi
 \right]_{n-1} \colon \me(c \oplus x, x)_{n-1} \to 
 \me^{\wr}\pars*{c^{\wr} \oplus^{\wr} x^{\wr}, x^{\wr}}_{n-1}
\end{align*}
since $\me(c \oplus x, x)_{n-1} = \Aut_{\me}\pars*{c \oplus x \oplus x^{\oplus n-1}} = \Aut_{\me}\pars*{c \oplus x^{\oplus n}} = \me(c,x)_{n}$.
\end{birki}
Checking the naturality of $\mathbf{F}[\gamma,\xi]$ (for all $\xi$, $\gamma$) here amounts to the commmutativity of
\[
 \xymatrixcolsep{2.5cm}
 \xymatrixrowsep{1.5cm} 
 \xymatrix{
 \Aut_{\me}\pars*{c \oplus x^{\oplus n}} \ar[r]^-{- \oplus \id_{x}} 
 \ar[d]_-{\mathbf{F}[\gamma,\xi]_{n}}
 &
 \Aut_{\me}\pars*{c \oplus x^{\oplus n+1}}
 \ar[d]^-{\mathbf{F}[\gamma,\xi]_{n+1}} 
 \\
 \Aut_{\me^{\wr}}\pars*{c^{\wr} \oplus^{\wr} {x^{\wr}}^{\oplus^{\wr} n}}
 \ar[r]^-{- \oplus^{\wr} \id_{x^{\wr}}}
 &
 \Aut_{\me^{\wr}}\pars*{c^{\wr} \oplus^{\wr} {x^{\wr}}^{\oplus^{\wr} n+1}}
 }
\]
which can be shown by inducting on $n$: for $n=0$ it follows from the computation that given $f \in \Aut_{\me}\pars*{c}$, we have
\begin{align*}
 \mathbf{F}[\gamma,\xi]_{1}\pars*{f \oplus \id_{x}} 
 &=
 \mathbf{F}\!\left[
 (\gamma \oplus^{\wr} \xi) \circ J_{c,x}^{-1},\,\xi
 \right]_{0}\pars*{f \oplus \id_{x}}
 \\
 &= \pars*{(\gamma \oplus^{\wr} \xi) \circ J_{c,x}^{-1}}
 \circ F(f \oplus \id_{x}) \circ
 \pars*{(\gamma \oplus^{\wr} \xi) \circ J_{c,x}^{-1}}^{-1}
 \\
 &= (\gamma \oplus^{\wr} \xi) \circ J_{c,x}^{-1}
 \circ F(f \oplus \id_{x}) \circ
 J_{c,x} \circ \pars*{\gamma^{-1} \oplus^{\wr} \xi^{-1}}
 \\
 &= (\gamma \oplus^{\wr} \xi) \circ 
 \pars*{F(f) \oplus^{\wr} F(\id_{x})}
 \circ \pars*{\gamma^{-1} \oplus^{\wr} \xi^{-1}} 
 \\
 &= \pars*{\gamma \circ F(f) \circ \gamma^{-1}} \oplus^{\wr}
 \pars*{\xi \circ F(\id_{x}) \circ \xi^{-1}}
 \\
 &= \mathbf{F}[\gamma,\xi]_{0}(f) \oplus^{\wr} \id_{x^{\wr}} \, ,
\end{align*}
and for $n \geq 1$, it follows from the induction hypothesis that
\[
 \xymatrixcolsep{2.5cm}
 \xymatrixrowsep{1.5cm} 
 \xymatrix{
 \Aut_{\me}\pars*{c \oplus x \oplus x^{\oplus n-1}} \ar[r]^-{- \oplus \id_{x}} 
 \ar[d]_-{\mathbf{F}\!\left[
 (\gamma \oplus^{\wr} \xi) \circ J_{c,x}^{-1},\,\xi
 \right]_{n-1} =\, \mathbf{F}[\gamma,\xi]_{n}}
 &
 \Aut_{\me}\pars*{c \oplus x \oplus x^{\oplus n}}
 \ar[d]^-{\mathbf{F}[\gamma,\xi]_{n+1} =\, \mathbf{F}\!\left[
 (\gamma \oplus^{\wr} \xi) \circ J_{c,x}^{-1},\,\xi
 \right]_{n}} 
 \\
 \Aut_{\me^{\wr}}\pars*{c^{\wr} \oplus^{\wr} x^{\wr} \oplus^{\wr} {x^{\wr}}^{\oplus^{\wr} n-1}}
 \ar[r]^-{- \oplus^{\wr} \id_{x^{\wr}}}
 &
 \Aut_{\me^{\wr}}\pars*{c^{\wr} \oplus^{\wr} x^{\wr} \oplus^{\wr} {x^{\wr}}^{\oplus^{\wr} n}}
 }
\]
commutes.
\end{rem}

\begin{prop} \label{tc-from-mon}
 Let $(\me, \oplus, 0 ,\tau)$ be a braided monoidal groupoid and $c,x \in \Obj(\me)$ so that we have an associated $\atsi$-group $\me(c,x)$ from Definition \ref{monoidal-to-A-grp}. Then the sequence $(\gamma_{n}: n \geq 2)$ defined as
\begin{align*}
 \gamma_{n} \coloneqq 
 \id_{c} \oplus \id_{x^{\oplus n-2}} \oplus \tau_{x,x}
 \in
 \me(c,x)_{n} = \Aut_{\me}\pars*{c \oplus x^{\oplus n}}
\end{align*} 
is tail-central in $\me(c,x)$ in the sense of Definition \ref{tail-central}.
\end{prop}
\begin{proof}
 We simply compute that for every $n \geq 2$ and $g \in \me(c,x)_{n-2}$, we have
\begin{align*}
 \gamma_{n} \cdot \me(c,x)_{(n-2,n)}(g)
 &= \pars*{\id_{c} \oplus \id_{x^{\oplus n-2}} \oplus \tau_{x,x}} \circ 
 (g \oplus \id_{x \oplus x}) 
 \\
 &= \pars*{\id_{c \oplus x^{\oplus n-2}} \oplus \tau_{x,x}} \circ 
 (g \oplus \id_{x \oplus x}) 
 \\
 &= g \oplus \tau_{x,x} 
 \\
 &= (g \oplus \id_{x \oplus x}) \circ
 \pars*{\id_{c \oplus x^{\oplus n-2}} \oplus \tau_{x,x}} 
 \\
 &= \me(c,x)_{(n-2,n)}(g) \cdot \gamma_{n} \, .
\end{align*}
\end{proof}

\subsection{Semi-simplicial notions} \label{semi-simp}
Let $\OI$ be the category of totally ordered finite sets and order-preserving injections. The ordinal sum $\oplus$ \cite[Section 4.1]{nk-posets} defines a monoidal structure on $\OI$ with the empty set $\empt$ as the unit object and it has a universal property as such:

\begin{thm}[{\cite[Theorem 2.3]{patzt-central}}] \label{initial}
 Let $(\me,\oplus,0)$ be a monoidal category where the unit object $0$ is initial. Then for every object $x$ in $\me$, there is a unique monoidal functor $F_{x} \colon \OI \to \me$ with identity coherence data that sends every singleton to $x$.
\end{thm}

We write $\OI_{+}$ for the full subcategory of $\OI$ consisting of nonempty totally ordered finite sets. A functor $\OI_{+}^{\opp} \to \ce$ is called a \textbf{semi-simplicial} object in $\ce$, and we write
 \[
   \ses \ce \coloneqq \left[\OI_{+}^{\opp}, \ce\right] \, .
 \]
 A functor $\OI^{\opp} \to \ce$ is called a \textbf{augmented semi-simplicial} object in $\ce$, and we write
 \[
   \aus \ce \coloneqq [\OI^{\opp}, \ce] \, .
 \]
 We adopt the topologist's convention and write $[k]$ for the totally ordered set
 \[
 0 < \cdots < k
 \]
so that $[-1] = \empt$ and $[k]$ has $k+1$ elements for each $k \geq -1$. Given a semi-simplicial (or augmented semi-simplicial) object $X_{\star}$ in $\ce$, we write $X_{k} \coloneqq X_{[k]}$ for each $k \geq 0$ (or $k \geq -1$). 

\begin{rem}
The notation above is in clash with what we use for, say, an $\FI$-object $X$: there $X_{n}$ means evaluating $X$ at a finite set of size $n$, not $n+1$. In what follows, it will always be clear from context as to whether the indexing is semi-simplicial or not. One cue is that our semi-simplicial objects will almost always be defined with a five-pointed star such as $X_{\star}$ or $\PBC_{\star}(V)$ before their individual simplices are referred to.
\end{rem}

Given a non-empty semi-simplicial set $X_{\star}$, the quantity
\[
 \sup\{k \in \zz_{\geq 0} : X_{k} \neq \empt\} \in \zz_{\geq 0} \cup\{\infty\}
\]
is called the \textbf{dimension} of $X_{\star}$.

Given a semi-simplicial set $X_{\star}$, we write
\[
  \Face(X_{\star}) \coloneqq \bigsqcup_{k=0}^{\infty} X_{k}
\]
and order this set via declaring $\sigma \leq \tau$ for $\sigma \in X_{a}$ and $\tau \in X_{b}$ if there exists a morphism $f \colon [b] \to [a]$ in $\OI_{+}$ such that
\[
 X_{f} \colon X_{a} \to X_{b}
\]
sends $\tau$ to $\sigma$. It is straightforward to verify that $\leq$ is a partial order (the antisymmetry follows from the only endomorphisms in $\OI_{+}$ being the identities) and we call $\Face(X_{\star})$ the \textbf{face poset} of $X_{\star}$. The construction defines a functor
\[
\Face \colon \ses \Set \to \Pos \, .
\]
The following type of semi-simplicial sets turn out to be most closely related with their face posets as we shall see in Theorem \ref{regular-ok}.
\begin{defn} \label{regular-defn}
 A semi-simplicial set $X_{\star}$ is called \textbf{regular} if every simplex in $X$ has distinct vertices, more precisely, for every $k \in \zz_{\geq 0}$ and $a \in X_{k}$, the map
\begin{align*}
 \Hom_{\OI_{+}}([0],[k]) &\to X_{0}
 \\
 f &\mapsto X_{f}(a)
\end{align*}
is injective.
\end{defn}

%\begin{defn}
% Let $\FI$ be the category of finite sets and injections, which is monoidal via disjoint union $\sqcup$ where the unit object $\empt$ is initial. For every finite set $S$, taking $x$ to be a singleton and $y$ to be $S$ in (\ref{sfx}), we write $\Inj_{\star}(S)$ for the corresponding semi-simplicial set. More explicitly, it has the description
%\begin{align*}
% \Inj_{\star}(S) \colon \OI_{+}^{\opp} &\to \Set \\
% [k] &\mapsto \Hom_{\FI}([k],S) \, .
%\end{align*}
%\end{defn}

%\begin{defn} \label{pb-defn}
% Let $R$ be a ring and $\VI(R)$ be the category of free right $R$-modules of finite rank and \textbf{splittable} injective $R$-linear maps, where a splittable map is one with a left inverse. $\VI(R)$ is monoidal via direct sum $\oplus$ where the unit object $0$ is initial. For every free right $R$-module $V$ of finite rank, taking $x$ to be $R$ and $y$ to be $V$ in (\ref{sfx}), we write $\UuU_{\star}(V)$. More explicitly, it has the description
%\begin{align*}
% \UuU_{\star}(V) \colon \OI_{+}^{\opp} &\to \Set \\
% [k] &\mapsto \Hom_{\VI(R)}\!\left(
% R^{\oplus [k]},V\right) \, .
%\end{align*}
%\end{defn}

\subsubsection{A general construction} \label{sec-omega}
Suppose $\ce$ is equipped with a shifting context $(\sigma, \eta)$ as in Definition \ref{defn:shifting-context}. The endofunctor category $\End\!\big( [\ce,\de] \big)$ is monoidal with functor composition, and the undercategory
\[
 \id_{[\ce,\de]} \darr \End\!\big( [\ce,\de] \big)
\]
is monoidal whose unit object is initial. The shift functor $\shift{}{} \colon [\ce,\de] \to [\ce,\de]$ receives a natural transformation 
\[
\eta_{*} \colon \id_{[\ce,\de]} \to \shift{}{} 
\]
via $\eta$. Hence by Theorem \ref{initial} there is a unique monoidal functor
\[
 \OI \to \id_{[\ce,\de]} \darr \End\!\big( [\ce,\de] \big)
\]
with identity coherence that assigns every singleton to $\eta_{*}$. If furthermore $\de$ is cocomplete, $\shift{}{} = - \circ \sigma$ has a left adjoint
\[
 \desu \colon [\ce,\de] \to [\ce,\de]
\]
given by the left Kan extension along $\sigma\colon \ce \to \ce$ \cite[Corollary 6.2.6]{riehl-context}. Through $\eta_{*}$, this adjunction also yields a natural transformation
\[
\xi \colon \desu \to \id_{[\ce,\de]}
\]
and for instance by \cite[Theorem 4.1]{kan-adjoint} a functor
\[
 \OI^{\opp} \to \End\!\big( [\ce,\de] \big) \darr \id_{[\ce,\de]}
\]
which sends each finite ordered set $[n] : 0 < \cdots < n$ to the natural transformation
\[
 \xi \circ \cdots \circ \Omega^{n}\xi \colon \desu^{n+1} \to \id_{[\ce,\de]} \, .
\]
We can equivalently specify this data as a functor
\[
 \wt{\desu}^{\ce}_{\star} \colon [\ce,\de] \to \big[\OI^{\opp},[\ce,\de]\big] = \aus[\ce,\de]
\]
which satisfies $\wt{\desu}^{\ce}_{-1}V = V$, $\wt{\desu}^{\ce}_{0}V = \desu V$, $\wt{\desu}^{\ce}_{1}V = \desu^{2} V$ etc. for every $V \colon \ce \to \de$. For the restriction of $\wt{\desu}^{\ce}_{\star}$ to  $\OI_{+}^{\opp}$ we write
\[
 \desu_{\star}^{\ce} \colon [\ce,\de] \to \big[\OI_{+}^{\opp},[\ce,\de]\big] = \ses[\ce,\de]\, .
\]
Writing \(
 \bul \,\colon \ce \to \Set
 \)
for the constant $\ce$-set that sends every object to the singleton $\{*\}$ and every morphism to the identity, the semi-simplicial $\ce$-set $\desu_{\star}^{\ce}(\bul)$ occupies an important place in the theory. 

Proposition \ref{des} below provides an explicit description for $\desu_{\star}^{\ce}(\bul)$. We shall write $\Pi_{0}$ for the functor that assigns a category with a small skeleton to the set of its connected components.

\begin{lem} \label{constant-colim}
 Let $\je, \de$ be categories where $\je$ has a small skeleton, $\de$ is cocomplete, and $d \in \Obj(\de)$.  Then the colimit of the constant functor
\begin{align*}
 \ul{d} \colon \je &\to \de \\
 x &\mapsto d
\end{align*}
is the $\Pi_{0}(\je)$-fold coproduct of $d$.
\end{lem}
\begin{proof}
 Considering $\Pi_{0}(\je)$ as a discrete category, $\ul{d}$ factors through $\Pi_{0}(\je)$ as
\begin{align*}
 \xymatrix{
 \je \ar[r]^-{\ul{d}} \ar[d]_-{\Pi_{0}} & \de \\
 \Pi_{0}(\je) \ar[ur]_-{\ul{d}}
 } \, .
\end{align*}
 Here every object in $\Pi_{0}(\je)$ is a connected component $C$ of objects in $\je$. For such $C$, the undercategory $C \darr \Pi_{0}$ is precisely the full subcategory of $\je$ on $C$, in particular $C \darr \Pi_{0}$ is connected. Thus by \cite[Lemma 8.3.4]{riehl-book} the functor $\Pi_{0}$ is final \cite[Definition 8.3.2]{riehl-book} and hence
 \[
  \colim(\ul{d} \colon \je \to \de) =
  \colim(\ul{d} \colon \Pi_{0}(\je) \to \de)  
 \]
which yields the claim as $\Pi_{0}(\je)$ is discrete and discrete colimits are coproducts.
\end{proof}
\begin{lem} \label{constant-Kan}
 Let $\ce, \de$ be categories where $\ce$ has a small skeleton, $\de$ is cocomplete, and $d \in \Obj(\de)$.  Then for every functor $\kappa \colon \ce \to \ce$,
the left Kan extension $\lkan_{\kappa}{\ul{d}}$ of the constant functor
\begin{align*}
 \ul{d} \colon \ce &\to \de \\
 x &\mapsto d
\end{align*}
along $\kappa$ can be described as
\begin{align*}
 \lkan_{\kappa}{\ul{d}} \colon \ce &\to \de
 \\
 y &\mapsto \text{the $\Pi_{0}(\kappa \darr y)$-fold coproduct of $d$} \, .
\end{align*}
\end{lem}
\begin{proof}
 By \cite[Theorem 6.2.1]{riehl-context} the evaluation of  $\lkan_{\kappa}{\ul{d}}$ at an object $y$ of $\ce$ is the colimit of the constant functor
 \[
  \ul{d} \colon \kappa \darr y \rarr \de \, ,
 \]
 where $\kappa \darr y$ inherits the property of having a small skeleton from $\ce$. We are done by Lemma \ref{constant-colim}.
\end{proof}

\begin{prop} \label{des}
 Let $\ce$ be a category with a small skeleton equipped with a shifting context $(\sigma, \eta)$. Then the semi-simplicial $\ce$-set $\desu_{\star}^{\ce}(\bul)$ can be described as
\begin{align*}
  \desu_{\star}^{\ce}(\bul) \colon \OI_{+}^{\opp} \times \ce &\to \Set
  \\
  ([k],y) &\mapsto \Pi_{0}(\sigma^{k+1} \darr y) \, .
\end{align*}
If furthermore $e$ is initial in $\ce$ such that $\Hom_{\ce}(x,e) = \empt$ whenever $x \ncong e$, we have the description
\begin{align*}
  \desu_{\star}^{\ce}(\bul) \colon \OI_{+}^{\opp} \times \ce &\to \Set
  \\
  ([k],y) &\mapsto \Hom_{\ce}(\sigma^{k+1}e, y) \, .
\end{align*}
\end{prop}
\begin{proof}
 As mentioned earlier, for every integer $k \geq -1$, the $\ce$-set $\desu_{k}^{\ce}(\bul)$ is equal to $\desu^{k+1}(\bul)$, that is, the $(k+1)$-fold composite of
\[
 \desu \colon [\ce,\Set] \to [\ce,\Set]
\]
applied to the constant $\ce$-set $\bul$, where $\desu = \lkan_{\sigma}$. We shall prove the first claim by induction on $k \geq -1$. The base step $k=-1$ follows because
\[
  \desu^{0}(\bul) = \bul = \Pi_{0}(\id_{\ce} \darr -)
\]
as $\id_{\ce} \darr y$ is connected for every object $y$ in $\ce$. Assuming the first claim holds for $k \geq 0$, the $\ce$-set $\Omega^{k+1}(\bul)$ is given by
\begin{align*}
  \desu^{k+1}(\bul) \colon \ce &\to \Set
  \\
  y &\mapsto \Pi_{0}(\sigma^{k+1} \darr y) \, .
\end{align*}
Since the set $\Pi_{0}(\sigma^{k+1} \darr y)$ is the $\Pi_{0}(\sigma^{k+1} \darr y)$-fold coproduct of the singleton $\{*\}$ in $\Set$, by Lemma \ref{constant-Kan} we have
\[
 \desu^{k+1}(\bul) = \lkan_{\sigma^{k+1}}(\bul)
\]
and hence
\begin{align*}
  \desu^{k+2}(\bul) &= \desu(\desu^{k+1}(\bul))
  \\
  &= \lkan_{\sigma}(\lkan_{\sigma^{k+1}}(\bul))
  \\
  &= \lkan_{\sigma^{k+2}}(\bul) \, ,
\end{align*}
which by Lemma \ref{constant-Kan} satisfies
\begin{align*}
  \desu^{k+2}(\bul) \colon \ce &\to \Set
  \\
  y &\mapsto \Pi_{0}(\sigma^{k+2} \darr y) \, ,
\end{align*}
completing the inductive step.

For the second claim, write $\kappa = \sigma^{k+1}$ and note that for every object $y$ in $\ce$ there is a natural map
\begin{align*}
  \Xi \colon \Hom_{\ce}(\kappa e, y) 
  &\to \Pi_{0}(\kappa \darr y) \\
  \alpha &\mapsto [(e,\alpha)] 
\end{align*}
where $[(e,\alpha)]$ is the set of objects in the connected component of 
\[
 (e,\alpha) \in \Obj(\kappa \darr y)
\] 
To see $\Xi$ is surjective, say $(x,\beta) \in \Obj(\kappa \darr y)$, that is, $x \in \Obj(\ce)$ and $\beta \colon \kappa x \to y$ is a morphism in $\ce$. The unique morphism $u \colon e \to x$, together with the commutative triangle
\begin{align*}
 \xymatrix{
 \kappa e \ar[r]^-{\kappa u} \ar[dr]_-{\alpha \,\coloneqq\, \beta \cdot \kappa u} 
 & \kappa x \ar[d]^-{\beta} \\
 & y
 } \, .
\end{align*}
defines a morphism $(e, \alpha) \to (x,\beta)$ in $\kappa \darr y$, so that
\[
 [(x,\beta)] = [(e,\alpha)] = \Xi(\alpha)
\]
Finally we show that the $\Xi$ is injective, this time using the hypothesis on $e$. Assume $\Xi(\alpha) = \Xi(\alpha')$, that is, $(e,\alpha)$ and $(e,\alpha)$ lie in the same connected component in the category $\kappa \darr y$. Suppose the shortest length of a zig-zag connecting $(e,\alpha)$ with $(e,\alpha')$ is $n \geq 1$. Such a zig-zag necessarily has the form
\begin{align*}
 \xymatrix{
 \kappa e \ar[r]^-{\kappa u_{1}} \ar[drr]_-{\alpha} 
 & \kappa x_{1} \ar^{\alpha_{1}}[dr] 
 & \kappa x_{2} \ar[l]_-{\kappa u_{2}} \ar[d]^-{\alpha_{2}}
 & \cdots & \kappa x_{n-1} \ar[dll]_-{\alpha_{n-1}}
 & \kappa e \ar[l]_-{\kappa u_{n}} \ar[dlll]^-{\alpha'} \\
 & & y
 } \, .
\end{align*}
But then the unique morphism $f \colon e \to x_{2}$ satisfies $u_{1} = u_{2}f$ due to $e$ being initial, so that
\[
 \alpha_{2} \cdot \kappa f = \alpha_{1} \cdot \kappa u_{2} \cdot \kappa f
 = \alpha_{1} \cdot \kappa u_{1} = \alpha \, ,
\]
hence we can skip $x_{1}$ write down a shorter zig-zag. This contradicts the minimality of $n$ and forces $n=0$, that is, $\alpha = \alpha'$ .
\end{proof}

We are now in a position to recover some of the semi-simplicial sets in Appendix \ref{specific}.

\begin{defn} \label{x-shift}
 Let $(\me,\oplus,0)$ be a monoidal category where the unit object $0$ is initial, and $x$ be an object in $\me$. The functor 
 \[
  - \oplus x \colon \me \to \me
 \]
 receives a natural transformation from $- \oplus 0 = \id_{\me}$ via the unique morphism $0 \to x$. We call the resulting shifting context (on $\me$, recall Definition \ref{defn:shifting-context})  \textbf{$x$-shifting}.
\end{defn}

\begin{rem} \label{patzt-Kx}
 If $\pars*{\me,\oplus,0}$ and $x$ satisfy the setting of Definition \ref{x-shift} so that $\me$ is equipped with $x$-shifting, the associated functor
\[
 \wt{\desu}^{\ce}_{\star} \colon [\ce,\de] \to \big[\OI^{\opp},[\ce,\de]\big] = \aus[\ce,\de]
\]
is denoted $K^{x}$ in \cite[Definition 2.4]{patzt-central}.
\end{rem}

Note that in the situation of Definition \ref{x-shift} when $\me$ has a small skeleton, if we also have $\Hom_{\me}(c,0) = \empt$ whenever $c \ncong 0$, by Proposition \ref{des} the resulting semi-simplicial $\me$-set $\desu_{\star}^{\me}(\bul)$ has the description
\begin{align*}
  \desu_{\star}^{\me}(\bul) \colon \OI_{+}^{\opp} \times \me &\to \Set
  \\
  ([k],y) &\mapsto \Hom_{\me}\!\left(x^{\oplus [k]}, y\right) \, .
\end{align*}
In particular, for every object $y$ in $\me$, the evaluation $  \desu_{\star}^{\me}(\bul)_{y}$ is precisely the semi-simplicial set (\ref{sfx}) in the beginning of Appendix \ref{specific} that all the specific semi-simplicial sets related to rings we specialized from. Let us note the ones we shall later refer to explicitly.

\begin{rem} \label{omega-PBC}
 Let $R$ be a ring. The monoidal category $\VIC(R)$ from Definition \ref{su-pbc-defn} has a countable skeleton, has $0$ as an initial object where $0$ does not receive any morphism from nonzero objects. Therefore equipping $\VIC(R)$ with $R$-shifting, the resulting semi-simplicial $\VIC(R)$-set $\desu_{\star}^{\VIC(R)}(\bul)$ has the description
 \begin{align*}
  \desu_{\star}^{\VIC(R)}(\bul) \colon \VIC(R) &\to \ses\Set
  \\
  V &\mapsto \PBC_{\star}(V)
\end{align*}
where the semi-simplicial set $\PBC_{\star}(V)$ is also defined in Definition \ref{su-pbc-defn}.
\end{rem}

\begin{rem} \label{omega-PBC-fU}
 Let $R$ be a commutative ring with a subgroup $\fU \leq R^{\times}$. The monoidal category $\VIC(R,\fU)$ from Definition \ref{pbc-U} has a countable set of objects, has $0$ as an initial object where $0$ does not receive any morphism from nonzero objects. Therefore equipping $\VIC(R,\fU)$ with $R$-shifting, the resulting semi-simplicial $\VIC(R,\fU)$-set has the description
 \begin{align*}
  \desu_{\star}^{\VIC(R,\fU)}(\bul) \colon \VIC(R,\fU) &\to \ses\Set
  \\
  R^{n} &\mapsto \PBC^{\fU}_{\star}(R^{n})
\end{align*}
where the semi-simplicial set $\PBC^{\fU}_{\star}(R^{n})$ is also defined in Definition \ref{pbc-U}.
\end{rem}

\begin{rem} \label{omega-HU}
 Let $R$ be a commutative ring. The monoidal category $\SI(R)$ from Definition \ref{hu-defn} has a small skeleton, has $0$ as an initial object where $0$ does not receive any morphism from nonzero objects. Therefore equipping $\SI(R)$ with $H_{R}\,$-shifting, the resulting semi-simplicial $\SI(R)$-set $\desu_{\star}^{\SI(R)}(\bul)$ has the description
 \begin{align*}
  \desu_{\star}^{\SI(R)}(\bul) \colon \SI(R) &\to \ses\Set
  \\
  V &\mapsto \HU_{\star}(V)
\end{align*}
where the semi-simplicial set $ \HU_{\star}(V)$ is also defined in Definition \ref{hu-defn}.
\end{rem}

\begin{rem}
 Let $\me$ be a monoidal category where the unit object $0$ is initial and $0$ does not receive any morphism from nonzero objects. Let $x$ be an object in $\me$ and equip $\me$ with $x$-shifting. The resulting semi-simplicial $\me$-set $\desu_{\star}^{\me}(\bul)$ has the property
 \begin{align*}
  \desu_{\star}^{\me}(\bul) \colon \me &\to \ses\Set \\
  x^{\oplus n} &\mapsto W_{n}(0,x)_{\bul}
\end{align*}
in the sense of the $W_{n}$-construction of \cite[Definition 2.1]{rw-wahl-stab}.
\end{rem}

%\begin{defn} \label{quadruple}
% Let $(\me,\oplus,0,x)$ be a quadruple such that
%\begin{birki}
% \item $\oplus \colon \me \times \me \to \me$ is a monoidal product on $\me$ with unit object $0$,
% \item $0$ is initial in $\me$,
% \item $x$ is a (distinguished) object in $\me$ with the unique morphism $0 \to x$ denoted $\iota_{x}$.
%\end{birki}
% We write $\W_{\star}(\me)$ for the semi-simplicial $\me$-set $\desu_{\star}(\bul)$ associated to the shifting context 
%\(
% (-\oplus x, - \oplus \iota_{x})
%\) 
%on $\me$.
%\end{defn}
%
%\begin{prop}
% Let $(\me,\oplus,0,x)$ be a quadruple as in Definition \ref{quadruple}. Then the semi-simplicial $\me$-set $\W_{\star}(\me)$ can be described as
%the composite
%\begin{align*}
% \OI_{+}^{\opp} \times \me &\xrightarrow{(F_{x})^{\opp}\, \times \,\id_{\me}} \me^{\opp} \times \me \xrightarrow{\Hom_{\me}(-,-)} \Set
%\end{align*}
%\end{prop}

\subsection{Topological notions}

Let $\Pos$ be the category of posets and monotone maps, $\scx$ be the category of abstract simplicial complexes and simplicial maps, and $\CW$ be the category of CW-complexes and cellular maps. There are functors
\[
 \Pos \xrightarrow{\Delta} \scx \xrightarrow{|\cdot|} \CW
\]
where $\Delta$ is the \textbf{order complex} \cite[Section 1.1]{wachs-poset} and $|\cdot|$ is the \textbf{geometric realization} \cite[Section 3.3]{fri-pic-cellular} constructions. There is also a realization functor 
\[
 |\cdot| \colon \ses \Set \to \CW
\]
for semi-simplicial sets \cite[Section 2]{rs-delta-1}. Using these functors, one can attach topological properties or invariants such as connectivity or homology groups to a poset, a simplicial complex, or a semi-simplicial set. 

\begin{conv}
 We often employ the common practice of omitting writing the realization functors for the topology of posets, simplicial complexes, or semi-simplicial sets. For instance when we say that a monotone map $f \colon P \to Q$ of posets is a homotopy equivalence, we mean that $|\Delta(f)| \colon |\Delta(P)| \to |\Delta(Q)|$ is a homotopy equivalence.
\end{conv}

\begin{thm} \label{regular-ok}
Every regular semi-simplicial set $X_{\star}$ is homeomorphic to its face poset $\Face(X_{\star})$.
\end{thm}
\begin{proof}
 The semi-simplicial set $X_{\star}$ being regular (Definition \ref{regular-defn}) implies that the CW-complex $|X_{\star}|$ is regular in the sense of \cite[Definition 2.55]{kozlov-book}. The poset $\F(|X_{\star}|)$ of \cite[Definition 10.11]{kozlov-book} is precisely $\Face(X_{\star})$ and the desired homeomorphism is then \cite[(10.5)]{kozlov-book}.
\end{proof}

\begin{defn} \label{space-conn}
Let $X$ be a nonempty (topological) space and $k \in \zz_{\geq 0}$. We say $X$ is $k$-connected if $\pi_{j}(X,x)$ is  a singleton for every $x \in X$ and $0\leq j \leq k$. We say $X$ is $k$-acyclic if $\wt{\co}_{j}(X) = 0$ for $0 \leq j \leq k$.
\end{defn}

\begin{conv}
 For an integer $k \leq -2$, every space is $k$-connected and $k$-acyclic. A space is $(-1)$-connected if and only if it is  $(-1)$-acyclic if and only if it is nonempty. 
\end{conv}

\begin{defn}
 Let $X,Y$ be nonempty spaces and $k \in \zz$. We say a (continuous) map $f \colon X \to Y$ is $k$-connected if for every $x \in X$, the homotopy fiber of the pointed map
 \[
  (X,x) \to (Y,f(x))
 \]
is $(k-1)$-connected.
\end{defn}

\begin{rem} \label{map-conn}
 By the long exact sequence of homotopy groups, a map $f \colon X \to Y$ between nonempty spaces is $k$-connected if and only if for every $x \in X$, and $j \in \zz_{\geq 0}$, the induced map
 \[
  \pi_{j}(f) \colon \pi_{j}(X,x) \to \pi_{j}(Y,f(x))
 \]
is an isomorphism if $j < k$ and surjective if $j = k$.
\end{rem}

%\begin{conv}
% Let $Y$ be a space and $k \in \zz$. The unique map $\empt \to Y$
%\begin{itemize}
% \item $k$-connected if and only if $k \leq -1$ when $Y \neq \empt$,
% \item is always $k$-connected when $Y = \empt$.
%\end{itemize}
%\end{conv}
%

\begin{prop} \label{inclusion-conn}
 Suppose $Y$ is a nonempty CW-complex and $\empt \neq X \subseteq Y$ is a subcomplex which contains every cell of $Y$ of dimension $\leq k$. Then the inclusion $X \emb Y$ is $k$-connected.
\end{prop}
\begin{proof}
Use Remark \ref{map-conn}.
\end{proof}

\begin{prop} \label{conn-transfer}
 Let $f \colon X \to Y$ be a map between nonempty spaces and $a,b \in \zz$ such that
\begin{itemize}
 \item $f$ is $a$-connected,
 \item $Y$ is $b$-connected,
 \item $0 \leq j \leq b$ implies $0 \leq j < a$.
\end{itemize}
Then $X$ is $b$-connected.
\end{prop}
\begin{proof}
 As $X \neq \empt$, the conclusion holds if $b \leq -1$. Next, assume $b \geq 0$ and fix $0 \leq j \leq b$. Then we also have $0 \leq j < a$ so that for every $x \in X$, the induced map
 \[
  \pi_{j}(f) \colon \pi_{j}(X,x) \to \pi_{j}(Y,f(x))
 \]
is an isomorphism due to $f$ being $a$-connected via Remark \ref{map-conn}. But $\pi_{j}(Y,f(x)) = 0$ because $Y$ is $b$-connected, so $\pi_{j}(X,x) = 0$. This vanishing happens for every $0 \leq j \leq b$ and $x \in X$, hence $X$ is $b$-connected by Definition \ref{space-conn}.
\end{proof}

\subsection{Homological notions}
\begin{defn} \label{H3-defn}
 Let $\ce$ be a category with a small skeleton equipped with a function $\rk \colon \Iso(\ce) \to \zz_{\geq 0}$ and a shifting context $(\sigma, \eta)$ as in Definition \ref{defn:shifting-context}. For $a \in \zz_{\geq 1}$, $b \in \zz_{\geq 0}$, we say that 
\[\text{$\ce$ satisfies $\hth{a,b}$}\] 
if one (hence both) of the following equivalent conditions hold:
\begin{birki}
 \item  For every integer $k \geq -1$, the $\ce$-module $\wt{\co}_{k}\!\left(
  \desu^{\ce}_{\star}(\bul)
  \right)$ satisfies
 \[
  \deg \wt{\co}_{k}\!\left(
  \desu^{\ce}_{\star}(\bul)
  \right) \leq \max\{-1, ak+b\} \, .
 \]
 \item For every object $x$ in $\ce$, the semi-simplicial set $\desu_{\star}^{\ce}(\bul)_{x}$ is
\[
  \text{$\floor*{\frac{\rk(x)-b-1}{a}}$-acyclic.}
\] 
\end{birki}
\end{defn}

\begin{thm} \label{VIC-H3}
 Let $R$ be a ring. Then the monoidal category $\VIC(R)$ from Definition \ref{su-pbc-defn} equipped with $R$-shifting and the function
\begin{align*}
 \rk \colon \Iso(\VIC(R)) &\to \zz_{\geq 0}
 \\
 V &\mapsto \rank_{R}(V) 
\end{align*}
 satisfies \(
\begin{cases}
 \hth{2,s+1} & \text{if $\sr(R) \leq s$,}
 \\
 \hth{2,2} & \text{if $R$ is a Euclidean domain.}
\end{cases}
 \)
%\begin{itemize}
% \item $\hth{2,s+1}$ if $\sr(R) \leq s$, and
% \item $\hth{2,2}$ if $R$ is a Euclidean domain.
%\end{itemize}
\end{thm}
\begin{proof}
 Note that every object $V$ in $\VIC(R)$ is isomorphic to $R^{n}$ for some $n \in \zz_{\geq 0}$, so the semi-simplicial set
 \[
  \Omega_{\star}^{\VIC(R)}(\bul)_{V} = \PBC_{\star}(V)
  \cong \PBC_{\star}(R^{n}) \quad \text{(Remark \ref{omega-PBC})}
 \]
is 
\[
\begin{dcases*}
 \floor*{\frac{\rk(V) - (s+1) - 1}{2}}
 \text{-connected} & \text{if  $\sr(R) \leq s$,}
 \vspace{0.1cm}
 \\
 \floor*{\frac{\rk(V) - 2 - 1}{2}} 
 \text{-connected} & \text{if $R$ is a Euclidean domain,}
\end{dcases*}
\]
 by Corollary \ref{pbc-conn} and Corollary \ref{PBC-Euc-conn}.
\end{proof}

\begin{thm} \label{VIC-fU-H3}
 Let $R$ be a commutative ring with $\sr(R) \leq s$ and let $\fU \leq R^{\times}$ be a subgroup. Then the monoidal category $\VIC(R,\fU)$ from Definition \ref{pbc-U} equipped with $R$-shifting and the function $\rk(R^{n}) \coloneqq n$
%\begin{align*}
% \rk \colon \Iso(\VIC(R,\fU)) &\to \zz_{\geq 0}
% \\
% R^{n} &\mapsto n 
%\end{align*}
 satisfies $\hth{2,s+1}$.
\end{thm}
\begin{proof}
  For every object $R^{n}$ in $\VIC(R)$, the semi-simplicial set
 \[
  \Omega_{\star}^{\VIC(R,\fU)}(\bul)_{R^{n}} = \PBC^{\fU}_{\star}(R^{n}) \quad \text{(Remark \ref{omega-PBC-fU})}
 \]
is \(\floor*{\frac{\rk(V) - (s+1) - 1}{2}} = \floor*{\frac{n-s-2}{2}}\)-connected by Corollary \ref{PBC-fU-conn}.
\end{proof}

\begin{thm} \label{SI-H3}
 Let $R$ be a Euclidean domain. Then the monoidal category $\SI(R)$ from Definition \ref{hu-defn} equipped with $H_{R}$-shifting and the function
 \begin{align*}
 \rk \colon \Iso(\SI(R)) &\to \zz_{\geq 0}
 \\
 V &\mapsto \frac{\rank_{R}(V)}{2}
\end{align*}
 satisfies $\hth{2,2}$.
\end{thm}
\begin{proof}
 Note that every object $V$ in $\SI(R)$ is isomorphic to a finite direct sum of $Y \coloneqq H_{R}$'s by \cite[Corollary 4.1.2]{knus-forms}, so $\rk$ is well-defined as $\rank_{R}(Y) = 2$. In other words, for every object $V$ in $\SI(R)$, there exists $g \in \zz_{\geq 0}$ such that $V \cong Y^{g}$; hence the semi-simplicial set
\[
 \Omega_{\star}^{\SI(R)}(\bul)_{V} = \HU_{\star}(V) \cong \HU_{\star}(Y^{g}) \quad \text{(Remark \ref{omega-HU})}
\]
is
\(
 \floor*{\frac{\rk(V) - 2 - 1}{2}} = \floor*{\frac{g-3}{2}}
\)-connected by Theorem \ref{hu-conn}.
\end{proof}

So far we have only applied the $\desu_{\star}^{\ce}$ construction to the constant $\ce$-set $\bul$ at a singleton. Note that for every abelian category $\ab$, there is a \textbf{Moore complex} construction
\[
 \mathbf{M} \colon \wt{\ses}\ab \to \Ch_{\geq -1}(\ab)
\]
that turns an augmented semi-simplicial object in $\ab$ into a chain complex in $\ab$ supported in degrees $\geq -1$ via setting $\mathbf{M}(Y_{\star})_{i} \coloneqq Y_{i}$ and the differential given by an alternating sum of the face maps.

\begin{rem} \label{augment-reduced}
 Given $X_{\star} \colon \OI_{+}^{\opp} \to \Set$, in other words a semi-simplicial set $X_{\star}$, form the augmented semi-simplicial set $\wt{X}_{\star}$ via
\begin{align*}
 \wt{X}_{\star} \colon \OI^{\opp} &\to \Set
 \\
 A &\mapsto \wt{X}_{A} \coloneqq
\begin{cases}
 X_{A} & \text{if $A \neq \empt$,}
 \\
 \{*\} & \text{if $A = \empt$,}
\end{cases}
\end{align*}
so the induced map for the ordered injection $\empt \to A$ for any linearly ordered set $A$ is the unique map $\wt{X}_{A} \to \{*\}$. Then the Moore complex \[\mathbf{M}\!\big(\zz [\wt{X}_{\star}]\big) \, ,\]
where $\zz[-]$ is the free abelian group functor, is precisely the augmented cellular chain complex of the CW-complex $|X_{\star}|$. In particular, for every integer $i \geq -1$ we have
\[\co_{i}\!\left(
\mathbf{M}\!\big(\zz [\wt{X}_{\star}]\big)\right) = \wt{\co}_{i}\big(X_{\star}\big)
\]
where the $\wt{\co}_{i}$ on the right denotes the $i$-th reduced homology of  a space.
\end{rem}

\begin{defn} \label{C-homology}
 Let $\ce$ be a category with a small skeleton equipped with a shifting context $(\sigma, \eta)$, and $\ab$ be an abelian category. For every integer $i \geq -1$, we write $\wt{\co}_{i}^{\ce}$ for the composite
 \[
  [\ce,\ab] \xrightarrow{\wt{\desu}^{\ce}_{\star}} [\ce,\wt{\ses}\ab]
  \xrightarrow{\mathbf{M} \,\circ\, -} \left[\ce, \Ch_{\geq -1}(\ab)\right]
  \xrightarrow{\co_{i}\circ\,-} [\ce,\ab]
 \]
and call it the \textbf{$i$-th $\ce$-homology}.
\end{defn}

\begin{rem} \label{patzt-Cx}
 Suppose $\pars*{\me,\oplus,0}$ and $x$ satisfy the setting of Definition \ref{x-shift} so that $\me$ is equipped with $x$-shifting. For every functor $V \colon \me \to \ab$ where $\ab$ is an abelian category, recalling Remark \ref{patzt-Kx}, the functor
 \[
  \mathbf{M}\pars*{\wt{\desu}^{\me}_{\star}(V)} \colon \me \to \Ch_{\geq -1}(\ab)
 \]
% (which can be equivalently regarded as an object in $\Ch_{\geq -1}[\me,\ab]$) 
 is denoted $\wt{C}^{x}_{\ast}(V)$ in \cite[Definition 2.5]{patzt-central}. 
\end{rem}

\begin{rem}
 With $\ce$ as in Definition \ref{C-homology}, for every $\ce$-set $\Lambda$ the $\ce$-module $\zz[\Lambda]$ satisfies
\begin{align*}
   \wt{\co}^{\ce}_{i}(\zz[\Lambda]) 
   &= \co_{i}\!\left(
   \mathbf{M}\big(\wt{\desu}^{\ce}_{\star}(\zz[\Lambda])\big)
   \right)
   \\
   &= \co_{i}\!\left(
   \mathbf{M}\big(\zz\big[
   \wt{\desu}^{\ce}_{\star}(\Lambda)\big]\big)
   \right)
\end{align*}
because the $\wt{\desu}^{\ce}_{\star}$-construction in Section \ref{sec-omega} is constructed by composites of the left adjoint functor $\desu$, which will commute with $\zz[-]$, another left adjoint. 
As $\wt{\desu}^{\ce}_{-1}(\bul) = \bul$ for the constant $\ce$-set $\bul$ at a singleton, we have $\wt{\desu}^{\ce}_{\star}(\bul) = \wt{\desu^{\ce}_{\star}(\bul)}$ with respect to the augmenting operation in Remark \ref{augment-reduced}, so
\begin{align*}
 \wt{\co}^{\ce}_{i}(\zz[\bul]) = \wt{\co}_{i}\!\left(
 \desu^{\ce}_{\star}(\bul)
 \right)
\end{align*}
as a $\ce$-module by Remark \ref{augment-reduced}.
\end{rem}

\section{Patzt's stability framework}

A \textbf{stability groupoid} (originally defined in \cite[Definition 3.1]{patzt-central}) is a skeletal monoidal groupoid $(\GG,\oplus,0)$ with $\Obj(\GG) = \zz_{\geq 0}$ and the bifunctor $\oplus$ on objects being precisely addition in $\zz_{\geq 0}$, such that writing $\GG_{n} \coloneqq \Aut_{\GG}(n)$ the following hold:
\begin{birki}
 \item For every $m,n \in \zz_{\geq 0}$, the monoidal structure map
 \[
  \oplus_{m,n} \colon \GG_{m} \times \GG_{n} \to \GG_{m+n}
 \]
 is injective.
 \item $\GG_{0}$ is the trivial group.
 \item For every $l,m,n \in \zz_{\geq 0}$, the diagram
 \[
  \xymatrixcolsep{2cm}
  \xymatrixrowsep{1.1cm}  
  \xymatrix{
  1 \times \GG_{m} \times 1 \ar[r]^-{\oplus_{l,m}\, \times \, 1}
  \ar[d]_-{1 \, \times \,\oplus_{m,n}} & \GG_{l+m} \times 1
  \ar[d]^-{\oplus_{l+m,n}}
  \\
  1 \times \GG_{m+n} \ar[r]^-{\oplus_{l,m+n}} & \GG_{l + m + n}
  }
 \]
 of groups is a pullback.
\end{birki}

\begin{defn} \label{braided-lift}
 Let $\pi \colon G \to G^{\wr}$ be a map of $\atsi$-groups. A braided monoidal functor
\[
 \mathbf{F} = (F,J,\psi) \colon \pars*{\GG, \oplus, 0, \tau} \to 
 \pars*{\GG^{\wr}, \oplus^{\wr}, 0^{\wr}, \tau^{\wr}}
\] 
between braided monoidal groupoids as in Definition \ref{bm-functor} is called a \textbf{braided lift} of $\pi$ if the following hold:
\begin{birki}
\item $\pars*{\GG,\oplus,0}$ and $\pars*{\GG^{\wr},\oplus^{\wr},0}$ are stability groupoids with $F(1) = 1$.
\item In the sense of Definition \ref{monoidal-to-A-grp}, we have $G = \GG(0,1)$ and $G^{\wr} =\GG^{\wr}(0,1)$.
\item In the sense of Remark \ref{Gcx-maps}, $\pi = \mathbf{F}[\id_{0},\id_{1}]$.
\end{birki}
The braided monoidal groupoid $(\GG,\oplus,0, \tau)$ is called a \textbf{braided lift} of the $\atsi$-group $G$ if $\id_{\GG}$ is a braided lift of the identity map $\id_{G} \colon G \to G$ of $\atsi$-groups.
\end{defn}

\begin{rem}
 To every monoidal groupoid $\GG$ one can associate a category $U\GG$ so that when $\GG$ is a stability groupoid, the associated $U\GG$ is called the ``stability category'' in \cite[Section 3]{patzt-central} and occupies an important place in the theory. The $\GG \mapsto U\GG$ construction is treated in some depth and generality in Appendix \ref{UG-ulan}.  
\end{rem}

\begin{ex} \label{lift-Sym}
 The sequence of finite symmetric groups $(\sym{n} : n\in \zz_{\geq 0})$ can be realized as the automorphism groups of a stability groupoid $\sym{\!}$ where the monoidal operation is the disjoint union $\sqcup$ together with the identifications
\begin{align*}
  m \sqcup n &= \{1,\dots,m\} \sqcup \{1,\dots,n\}
  \\
  &\cong \{1,\dots,m,m+1,\dots,m+n\} = m+n\, .
\end{align*}
Swaps of the form $\sigma \sqcup \sigma' \mapsto \sigma' \sqcup \sigma$ define a symmetric braiding on $\sym{\!}$. We note the following: 
\begin{birki}
 \item In the sense of Definition \ref{braided-lift}, this stability groupoid $\sym{\!}$ is a braided lift of the $\atsi$-group $\sym{\!}$ of Example \ref{symmetric-A} (admittedly with an abuse of notation).
 \item The associated category $U \sym{\!}$ from Definition \ref{U-const}, with the monoidal $\oplus$ of Theorem \ref{enhance} part (1), is monoidally equivalent to the category $\FI$ of finite sets and injections with the disjoint union via the assignment $1 \mapsto \{*\}$. We investigate $\FI$-modules further in Section \ref{poly-FI}. 
\end{birki}
\end{ex}

\begin{ex} \label{lift-AF}
 The sequence of free groups $(F_{n} : n\in \zz_{\geq 0})$ of finite rank, as defined in Section \ref{torelli-AF} can be realized as the automorphism groups of a stability groupoid $\cauf$ where the monoidal operation is the free product $\ast$ together with the identifications
\begin{align*}
  F_{m} \ast F_{n} &= F_{\{x_{1}, \dots, x_{m}\}} \ast F_{\kume*{x_{1}, \dots ,x_{n}}}
  \\
  &\cong
  F_{\kume*{x_{1}, \dots, x_{m}}} \ast F_{\kume*{x_{m+1}, \dots, x_{m+n}}}
  \\ 
  &\cong F_{\kume*{x_{1}, \dots, x_{m}} \sqcup \kume*{x_{m+1}, \dots, x_{m+n}}} = F_{m+n} \, .
\end{align*}
Swaps of the form $a \ast b \mapsto b \ast a$ define a symmetric braiding on $\cauf$. We note the following:
\begin{birki}
 \item The inclusion of $\cauf$ in $f\GG_{\text{free}}$ of \cite[Section 5.2]{rw-wahl-stab} is a braided monoidal equivalence.
 \item The braided (in fact symmetric) stability groupoid $\cauf$ is denoted $\operatorname{AutF}$ in \cite[Example 1.1, part (d)]{patzt-central}.
 \item In the sense of Definition \ref{braided-lift}, $\cauf$ is a braided lift of the $\atsi$-group $\auf$ from Section \ref{torelli-AF}.
\end{birki}
\end{ex}

\begin{ex} \label{lift-GL}
 Fix a ring $R$.  The sequence of general linear groups $\pars*{\GL_{n}(R) : n\in \zz_{\geq 0}}$ can be realized as the automorphism groups of a stability groupoid $\cGL(R)$ where the monoidal operation is the block-sum
\[
  A \oplus B \coloneqq \matr{A & 0 \\ 0 & B}
\]
of matrices, which is (symmetrically) braided via swaps of the form
\[
 \matr{A & 0 \\ 0 & B} \mapsto \matr{B & 0 \\ 0 & A} \, .
\]
We note the following:
\begin{birki}
 \item The inclusion of $\cGL(R)$ in $fR\mhyphen\operatorname{Mod}$ of \cite[Section 5.3]{rw-wahl-stab} is a braided monoidal equivalence onto the full subgroupoid of $fR\mhyphen\!\operatorname{Mod}$ consisting of free $R$-modules.
 \item In the sense of Definition \ref{braided-lift}, $\cGL(R)$ is a braided lift of the $\atsi$-group $\GL(R)$ from Example \ref{A-grp-GL}.
 \item The associated category $U \cGL(R)$ from Definition \ref{U-const}, with the monoidal $\oplus$ of Theorem \ref{enhance} part (1), is monoidally equivalent to $\VIC(R)$ of Definition \ref{su-pbc-defn} via the assignment $1 \mapsto R$.
\end{birki}
\end{ex}

\begin{ex} \label{lift-Mod}
 The sequence of mapping class groups $\pars*{\Mod\pars*{\surf^{1}_{g}} : g \in \zz_{\geq 0}}$ of orientable surfaces with one boundary component, as defined in Section \ref{torelli-MCG}, can be realized as the automorphism groups of a stability groupoid $\cMod\pars*{\surf^{1}}$ by declaring $\cMod\pars*{\surf^{1}}$ to be the braided monoidal groupoid denoted $\mathcal{M}_{2}^{+}$ in \cite[Section 1.1.2.2]{palmer-soulie-poly}. We note the following: 
\begin{birki}
 \item As indicated in \cite[page 5777]{palmer-soulie-poly}, the inclusion of $\cMod\pars*{\surf^{1}}$ in $\mathcal{M}_{2}$ of \cite[Section 5.6]{rw-wahl-stab} is a braided monoidal equivalence onto the full subgroupoid of $\mathcal{M}_{2}$  consisting of orientable surfaces.
 \item  In the sense of Definition \ref{braided-lift}, $\cMod\pars*{\surf^{1}}$ is a braided lift of the $\atsi$-group $\Mod\pars*{\surf^{1}}$ from Section \ref{torelli-MCG}.
\end{birki}
\end{ex}

\begin{ex} \label{lift-Sp}
 Fix a commutative ring $R$.  The sequence of symplectic groups \[\pars*{\Sp_{2g}(R) : g \in \zz_{\geq 0}}\,,\] as defined in Section \ref{torelli-MCG} can be realized as the automorphism groups of a stability groupoid $\cSp(R)$ where the monoidal operation is the block-sum
\[
  A \oplus B \coloneqq \matr{A & 0 \\ 0 & B}
\]
of matrices, which is (symmetrically) braided via swaps of the form
\[
 \matr{A & 0 \\ 0 & B} \mapsto \matr{B & 0 \\ 0 & A}
\]
 define a symmetric braiding on $\cSp(R)$. We note the following: 
\begin{birki}
 \item The inclusion of $\cSp(R)$ in \(f(R,-1,R)\mhyphen\!\operatorname{Quad}\) 
of \cite[Section 5.4]{rw-wahl-stab} is a braided monoidal equivalence onto the full subgroupoid of \(f(R,-1,R)\mhyphen\!\operatorname{Quad}\)  consisting of direct sums of the symplectic $R$-module $H_{R}$ from Definition \ref{hu-defn}.
 \item In the sense of Definition \ref{braided-lift}, $\cSp(R)$ is a braided lift of the $\atsi$-group $\Sp(R)$ from Section \ref{torelli-MCG}.
 \item The associated category $U \cSp(R)$ from Definition \ref{U-const}, with the monoidal $\oplus$ of Theorem \ref{enhance} part (1), is monoidally equivalent to $\SI(R)$ of Definition \ref{hu-defn} via the assignment $1 \mapsto H_{R}$.
\end{birki}
\end{ex}

\begin{ex} \label{lift-SL-fU}
 Fix a commutative ring $R$ and a subgroup $\fU \leq R^{\times}$.  The sequence of groups $\pars*{\SL^{\fU}_{n}(R) : n\in \zz_{\geq 0}}$, as defined in Section \ref{section:cong-GL} can be realized as the automorphism groups of a stability groupoid $\cSL^{\fU}(R)$ where the monoidal operation is the block-sum
\[
  A \oplus B \coloneqq \matr{A & 0 \\ 0 & B}
\]
of matrices, which, in case $-1 \in \fU$, is (symmetrically) braided via swaps of the form
\[
 \matr{A & 0 \\ 0 & B} \mapsto \matr{B & 0 \\ 0 & A} \, .
\]
We note the following:
\begin{birki}
 \item The stability groupoid $\cSL^{\fU}(R)$ is denoted $\GL^{\fU}(R)$ in \cite[Example 3.6]{mpw-torelli-H2}.
 \item Suppose $-1 \in \fU$. Then in the sense of Definition \ref{braided-lift}, $\cSL^{\fU}(R)$ is a braided lift of the $\atsi$-group $\SL^{\fU}(R)$ from Section \ref{section:cong-GL}.
 \item The associated category $U \cSL^{\fU}(R)$ from Definition \ref{U-const} is equivalent to $\VIC(R,\fU)$ of Definition \ref{pbc-U} via the assignment $1 \mapsto R$. When $-1 \in \fU$, this becomes a monoidal equivalence under the monoidal $\oplus$ of Theorem \ref{enhance} part (1).
\end{birki}
\end{ex}

Having lifted the $\atsi$-groups from the introduction, we note that the relevant maps between them have braided lifts as well, with the underlying functors being \textbf{full}:

\begin{prop} \label{lift-AF-to-GL}
 In the sense of Definition \ref{braided-lift}, the surjective map $\auf \to \GL(\zz)$ of $\atsi$-groups from Section \ref{torelli-AF} has a full braided lift of the form
 \[ 
   \cauf \to \cGL(\zz)
 \]
with respect to the braided stability groupoids defined in Example \ref{lift-AF} and Example \ref{lift-GL}.
\end{prop}

\begin{prop} \label{lift-Mod-to-Sp}
 In the sense of Definition \ref{braided-lift}, the surjective map $\Mod\pars*{\surf^{1}} \to \Sp(\zz)$ of $\atsi$-groups from Section \ref{torelli-MCG} has a full braided lift of the form
 \[ 
   \cMod\pars*{\surf^{1}} \to \cSp(\zz)
 \]
with respect to the braided stability groupoids defined in Example \ref{lift-Mod} and Example \ref{lift-Sp}.
\end{prop}

\begin{prop} \label{lift-GL-to-SLfU}
  Let $I$ be a proper ideal in a commutative ring $R$ such that the mod-$I$ reduction  
$\SL_{n}(R) \rarr \SL_{n}(R/I)$ is surjective for every $n \geq 0$. Then setting 
\begin{align*}
  \fU := \{x + I : x \in R^{\times}\} \, ,
\end{align*}
the surjective map $\GL(R) \to \SL^{\fU}(R/I)$ of $\atsi$-groups from Remark \ref{mod-I-surj} has a full braided lift of the form
\[
 \cGL(R) \to \cSL^{\fU}(R/I)
\]
with respect to the braided stability groupoids defined in Example \ref{lift-GL} and Example \ref{lift-SL-fU}.
\end{prop}

\begin{defn} \label{braided-lift-module}
 Let $G$ be an $\atsi$-group and $V$ be an $\atsi_{G}$-module. A pair $\pars*{\GG,\wh{V}}$ is called a \textbf{braided lift of} $V$ under the following conditions: 
\begin{birki}
 \item $\GG = \pars*{\GG,\oplus,0, \tau}$ is a braided lift of $G$ as in Definition \ref{braided-lift}.
 \item The functor $T_{\GG,0,1} \colon \atsi_{G} \to U\GG^{\circ}$ from Proposition \ref{A-to-U} and the functor 
 \[
  U_{\pmb{\tau}} \colon U\GG^{\circ} \to U\GG
 \] 
 induced by the (braided) monoidal functor
 \[
  \pmb{\tau} = \pars*{\id_{\GG},\tau,\id_{0}} \colon \GG^{\circ} \to \GG
 \]
 from Proposition \ref{exer} via Proposition \ref{UF-oldu} make the diagram
\[
 \xymatrixcolsep{1.5cm}
 \xymatrixrowsep{1.1cm} 
 \xymatrix{
 \atsi_{G} \ar[r]^-{T_{\GG,0,1}} \ar[drr]_-{V} & 
 U\GG^{\circ} 
 \ar[r]^-{U_{\pmb{\tau}}} & U\GG \ar[d]^-{\wh{V}}
 \\
 & & \lMod{\zz} 
 }
\]  
commute up to natural isomorphism. 
\end{birki}
\end{defn}

\begin{prop}\label{Hk-lift}
  Let $\pi \colon G \to G^{\wr}$ be a surjective map of $\atsi$-groups with a full braided lift
\[
 \mathbf{F} = (F,J,\psi) \colon \pars*{\GG, \oplus, 0, \tau} \to 
 \pars*{\GG^{\wr}, \oplus^{\wr}, 0, \tau^{\wr}} \, .
\] 
Then writing $K \coloneqq \ker \pi$, for every $k \in \zz_{\geq 0}$ the $\atsi_{G^{\wr}}$-module $\co_{k}(K)$ from Proposition \ref{factor-AQ} has a unique braided lift of the form
\[
 \pars*{\GG^{\wr}, \co_{k}(\K)}
\]
up to natural isomorphism.
\end{prop}
\begin{proof}
 Since $\psi \colon F(0) \to 0$ is an isomorphism by Definition \ref{monoidal-F} and $\GG^{\wr}$ is skeletal, we have $F(0) = 0$. We have $F(1) = 1$ by Definition \ref{braided-lift}. Moreover for every $n \in \zz_{\geq 2}$, the morphism 
 \[
  J_{n-1,1} \colon F(n-1) \oplus^{\wr} F(1) \to F\pars*{\pars*{n-1} \oplus 1} = F(n)
 \]
 in $\GG^{\wr}$ is an isomorphism by Definition \ref{monoidal-F}, so again $\GG^{\wr}$ being skeletal forces
 \[
  F(n-1) \oplus^{\wr} F(1) = F(n) \in \Obj(\GG^{\wr}) \, .
 \]
Since $\oplus^{\wr}$ is also addition on $\Obj(\GG^{\wr}) = \zz_{\geq 0}$, we can induct on $n$ to conclude $F(n) = n$ for every $n \in \zz_{\geq 0}$. In other words, $F$ is the identity on objects.

As $F$ is essentially surjective and full, we can now invoke Theorem \ref{essek}, part (2). The uniqueness claim follows from the proof there.
\end{proof}

\begin{cor} \label{torelli-AF-lift}
 For every $k \in \zz_{\geq 0}$, the $\atsi_{\GL(\zz)}$-module $\co_{k}(\ia)$ from Section \ref{torelli-AF} has a unique braided lift (Definition \ref{braided-lift-module}) of the form
\[
 \pars*{\,\cGL(\zz), \co_{k}(\cia)\,}
\]
up to natural isomorphism, where $\cGL(\zz)$ is from Example \ref{lift-GL}.
\end{cor}
\begin{proof}
 This follows from Proposition \ref{lift-AF-to-GL} and Proposition \ref{Hk-lift}.
\end{proof}

\begin{cor} \label{torelli-MCG-lift}
 For every $k \in \zz_{\geq 0}$, the $\atsi_{\Sp(\zz)}$-module $\co_{k}\pars*{\torel^{1}}$ from Section \ref{torelli-MCG} has a unique braided lift (Definition \ref{braided-lift-module}) of the form
\[
 \pars*{\,\cSp(\zz), \co_{k}\pars*{\ctor^{1}}\,}
\]
up to natural isomorphism, where $\cSp(\zz)$ is from Example \ref{lift-Sp}.
\end{cor}
\begin{proof}
 This follows from Proposition \ref{lift-Mod-to-Sp} and Proposition \ref{Hk-lift}.
\end{proof}

\begin{cor} \label{cong-lift}
 Let $I$ be a proper ideal in a commutative ring $R$ such that the mod-$I$ reduction  
$\SL_{n}(R) \rarr \SL_{n}(R/I)$ is surjective for every $n \geq 0$. Then setting 
\begin{align*}
 \fU := \{x + I : x \in R^{\times}\} \leq (R/I)^{\times} \, ,
\end{align*}
 for every $k \in \zz_{\geq 0}$, the $\atsi_{\SL^{\fU}(R/I)}$-module $\co_{k}\pars*{\GL(R,I)}$ from Section \ref{section:cong-GL} has a has a unique braided lift (Definition \ref{braided-lift-module}) of the form
\[
 \pars*{\,\cSL^{\fU}(R/I),\, \co_{k}\pars*{\cGL(R,I)}\,}
\]
up to natural isomorphism, where $\cSL^{\fU}(R/I)$ is from Example \ref{lift-SL-fU}.
\end{cor}
\begin{proof}
 This follows from Proposition \ref{lift-GL-to-SLfU} and Proposition \ref{Hk-lift}.
\end{proof}

\subsection{Polynomiality and $U\GG$-homology}
\begin{conv} \label{conv-shift}
 Given a braided stability groupoid $\GG$, noting that $0$ is initial in $U\GG$ by \cite[Proposition 1.8.(i)]{rw-wahl-stab}, we may, and do, equip $U\GG$ with $1$-shifting of Definition \ref{x-shift} via the monoidal $\oplus$ in Theorem \ref{enhance} part (1). Using this shifting context and the identity rank function 
 \[ 
  \id_{\zz_{\geq 0}} \colon \Iso(U\GG) = \zz_{\geq 0} \to \zz_{\geq 0} \, ,
 \]
we have polynomial conditions via Definition \ref{poly-defn}. Moreover we have associated $U\GG$-homology functors
 \[
  \wt{\co}_{i}^{U\GG} \colon [U\GG, \lMod{\zz}] \to [U\GG, \lMod{\zz}]
 \]
for every $i \geq -1$ coming from Definition \ref{C-homology}, through which we can ask whether $U\GG$ satisfies $\hth{a,b}$ from Definition \ref{H3-defn}.
\end{conv}

\begin{rem} \label{H-match}
 Let $\GG$ be a braided stability groupoid under Convention \ref{conv-shift}. According to Remark \ref{patzt-Cx}, for every $i \in \zz_{\geq -1}$ and $U\GG$-module $V$, the $U\GG$-module $\wt{\co}_{i}^{U\GG}(V)$ is the $i$-th homology of the chain complex denoted $\wt{C}^{1}_{\ast}(V)$ in \cite[Definition 2.5]{patzt-central}, which is abbreviated as $\wt{C}_{\ast}(V)$ from \cite[page 892]{patzt-central} onwards. The augmented semi-simplicial $U\GG$-module behind $\wt{C}_{\ast}(V)$ (via the Moore complex construction) is denoted 
 \[
   \wt{\desu}^{U\GG}_{\star}(V)
 \]
in this paper, whereas it is denoted $K_{\bul}^{1}V$ in \cite[Definition 2.5]{patzt-central}, which is abbreviated again as $K_{\bul}V$ from \cite[page 892]{patzt-central} onwards.
The description of $K_{\bul} V$ in \cite[Proposition 4.2]{patzt-central}, \cite[Proposition 4.3]{patzt-central} matches precisely with the augmented semi-simplicial $U\GG$-module denoted 
\[
  \wt{C}_{\bul}^{\GG}(V)
\]
in \cite[Definition 3.14]{mpw-torelli-H2}. Consequently, the $U\GG$-module $\wt{\co}_{i}^{U\GG}(V)$ in Convention \ref{conv-shift} is isomorphic to $\wt{\co}_{i}^{\GG}(V) = \wt{\co}_{i}(V)$ of \cite[Definition 3.14]{mpw-torelli-H2} and of \cite[Definition 2.9]{mpp-secondary} (note \cite[Remark 2.10]{mpp-secondary}).
\end{rem}

\begin{rem} \label{restrict-FI}
 Suppose $\GG$ is a \textbf{symmetric} stability groupoid, that is, a braided stability groupoid where the braiding is symmetric as in Definition \ref{bm-cat}. Then the maps in \cite[Remark 2.8.5]{tensor-cat-book} patch to a braided monoidal functor $\sym{\!} \to \GG$, which then induces a monoidal functor
 \[
  U\sym{\!} \to U\GG
 \] 
 via Theorem \ref{enhance}. Since this functor is the identity on objects, it preserves the associated shifting contexts, hence under Convention \ref{conv-shift} a $U\GG$-module $V$ is polynomial of degree $r$ starting at $M$ if and only if it is so as a $U\sym{\!}$-module, if and only if it is so as an $\FI$-module in the setting of Appendix \ref{poly-FI}, recalling Example \ref{lift-Sym}.
\end{rem}

\begin{thm}[{\cite[Theorem 3.11]{mpp-secondary}}] \label{mpp-bound}
 Let $\GG$ be a braided stability groupoid under Convention \ref{conv-shift} such that $U\GG$ satisfies $\hth{a,b}$ with $a \geq 2$. If a $U\GG$-module $L$ is polynomial of degree $r$ starting at $N$, then
 \[
  \deg \wt{\co}^{U\GG}_{i}\!(L) \leq \max\{i+N,\,ai+b+r\}
 \]
for every $i \in \zz_{\geq -1}$. 
\end{thm}

\begin{thm}[{\cite{patzt-central},\cite{mpw-torelli-H2}}] \label{H-kriter}
 Let $G$ be an $\atsi$-group with a braided lift $\GG = \pars*{\GG,\oplus,0, \tau}$. For every $\atsi_{G}$-module $V$ with a braided lift  
 \(\pars*{\GG,\wh{V}}\)
 as in Definition \ref{braided-lift-module}, under Convention \ref{conv-shift} the following hold:
\begin{birki}
 \item $V$ has surjective stability degree $\leq d$ (Definition \ref{surj-stab}) if and only if
 \[
  \deg \wt{\co}^{U\GG}_{-1}\!\pars*{\wh{V}} \leq d \, .
 \]
 \item With respect to the tail-central sequence in $G =\GG(0,1)$ from Proposition \ref{tc-from-mon}, $V$ has central stability degree $\leq d$ (Definition \ref{defn:central}) if and only if
 \[
 \max\!\left\{
 \deg\wt{\co}^{U\GG}_{-1}\!\pars*{\wh{V}},\,
 \deg\wt{\co}^{U\GG}_{0}\!\pars*{\wh{V}}
 \right\} \leq d \, . 
 \] 
\end{birki}
\end{thm}
\begin{proof}
 Matching \cite[Definition 3.10]{mpw-torelli-H2} with Definition \ref{surj-stab} and Definition \ref{defn:central} here, the equivalences follow from \cite[Remark 3.16]{mpw-torelli-H2} through Remark \ref{H-match}.
\end{proof}

\begin{thm} \label{H1-ia}
 Consider the braided stability groupoid $\cGL(\zz)$ from Example \ref{lift-GL}. Under Convention \ref{conv-shift}, the $U\cGL(\zz)$-module 
\[
 \co_{1}\pars*{\cia}
\] 
 described in Corollary \ref{torelli-AF-lift} is polynomial of degree $3$ starting at $0$.
\end{thm}
\begin{proof}
 By Remark \ref{restrict-FI}, it suffices to establish polynomiality of $\co_{1}\pars*{\cia}$ as an $\FI$-module. By \cite[Theorem 4.1]{mpw-torelli-H2}, there exists an $\FB$-module $W$ with $\deg(W) = 3$ such that 
 \[
   \co_{1}(\cia) \cong \induce(W)
 \] 
 (see Appendix \ref{poly-FI} for the notation). Hence we are done by Proposition \ref{induced-poly}.
\end{proof}

\begin{thm} \label{H1-torel}
 Consider the braided stability groupoid $\cSp(\zz)$ from Example \ref{lift-Sp}. Under Convention \ref{conv-shift}, the $U\cSp(\zz)$-module 
\[
 \co_{1}\pars*{\ctor^{1}}
\] 
 described in Corollary \ref{torelli-MCG-lift} is polynomial of degree $3$ starting at $4$.
\end{thm}
\begin{proof}
 By Remark \ref{restrict-FI}, it suffices to establish polynomiality of $\co_{1}\pars*{\ctor^{1}}$ as an $\FI$-module. By \cite[Theorem 4.2]{mpw-torelli-H2}, there exists a finitely generated $\FB$-module $W$ with $\deg(W) = 3$ and an isomorphism 
\begin{align*}
 \co_{1}\pars*{\ctor^{1}}_{\!\geq 3} \cong \induce(W)_{\geq 3}
\end{align*}
(see Appendix \ref{poly-FI} for the notation). The $\FI$-module $\induce(W)$ is polynomial of degree $3$ starting at $0$ by Proposition \ref{induced-poly}. Therefore 
\begin{itemize}
 \item by part (2) of Corollary \ref{poly-truncate}, the $\FI$-module $\induce(W)_{\geq 3}$ is polynomial of degree $3$ starting at $\!\max\{0,4\} = 4$, and
 \item by part (1) of Corollary \ref{poly-truncate}, the $\FI$-module $\co_{1}\pars*{\ctor^{1}}$ is polynomial of degree $3$ starting at $\max\{4,3\} = 4$.
\end{itemize}
\end{proof}

\begin{thm} \label{cong-poly}
Let $I$ be a proper ideal in a commutative ring $R$ and $s \geq 1$ such that 
\begin{itemize}
 \item $
 \sr(R) \leq s  
$, and
 \item the mod-$I$ reduction  
$\SL_{n}(R) \rarr \SL_{n}(R/I)$ is surjective for every $n \geq 0$. 
\end{itemize}
Set 
\( \fU := \{x + I : x \in R^{\times}\} \leq (R/I)^{\times}
\), consider the braided stability groupoid $\cSL^{\fU}(R/I)$ from Example \ref{lift-SL-fU}, and fix $k \in \zz_{\geq 0}$. Under Convention \ref{conv-shift}, the $U\cSL^{\fU}(R/I)$-module 
\[
 \co_{k}\pars*{\cGL(R,I)}
\] 
 described in Corollary \ref{cong-lift} is polynomial of degree $2k$ starting at
\begin{align*}
 \begin{cases}
 0 & \text{if $k=0$,}
 \\
 2s+4 & \text{if $k=1$,}
 \\
 4k+2s+1 & \text{if $k \geq 2$.}
\end{cases}
\end{align*}
\end{thm}
\begin{proof}
 By Remark \ref{restrict-FI}, it suffices to establish polynomiality of $\co_{k}\pars*{\cGL(R,I)}$ as an $\FI$-module. This follows from \cite[Theorem 4.17]{bahran-reg} and Theorem \ref{bah-poly}.
\end{proof}

\subsection{ $\co_{< k}$ is polynomial $\imp$ $\co_{k}$ is centrally stable} \label{section:mpw-method}

%\begin{defn} \label{stab-ses}
%We say that
%\[
% 1 \rarr \K \rarr \GG \xrightarrow{F} \GG^{\wr} \rarr 1
%\]
%is a \textbf{stability short exact sequence} if 
%\begin{itemize}
% \item $\GG, \GG^{\wr}$ are braided stability groupoids,
% \item The functor $F \colon \GG\ \to \GG^{\wr}$ is braided monoidal, full, and satisfies $F(1) = 1$,
% \item $\K = \ker F$ as in Definition \ref{defn-ker}.
%\end{itemize}
%\end{defn}

%\begin{conv} \label{ses-setup}
% Given a stability short exact sequence 
% \[
%  1 \rarr \K \rarr \GG \xrightarrow{F} \GG^{\wr} \rarr 1
% \]
%as in Definition \ref{stab-ses}, we apply Convention \ref{patzt-setup} to both $\GG$ and $\GG^{\wr}$. Since $\Aut_{\K}(n) = \K_{n}$,  
%taking $\gamma \coloneqq \id_{0}$, $\xi \coloneqq \id_{1}$ in $\GG^{\wr}$,
%for every homological degree $k \in \zz_{\geq 0}$, the resulting triangular prism diagram in part (2) of Theorem \ref{essek} can be rewritten as
%\[
% \xymatrixcolsep{1.8cm}
% \xymatrixrowsep{1.0cm} 
% \xymatrix{
% \atsi_{\GG} \ar[r]^-{T_{\GG}} \ar[drr]_-{\K} 
% \ar[ddr]_{\atsi_{\mathbf{F}}} & 
% U\GG^{\circ} \ar[ddr]_-{U_{\mathbf{F}^{\circ}}}
% \ar[r]^-{U_{\pmb{\tau}}} & U\GG \ar[d]_-{\K} \ar[ddr]^{U_{\mathbf{F}}}
% \\
% & & \Grp \ar[ddr]^<<<<<<<<<<<<{\co_{k}}
% \\
% & \atsi_{\GG^{\wr}} \ar[r]^-{T_{\GG^{\wr}}} 
% \ar[drr]_-{\co_{k}\pars*{\K}\,\,\,\,\,\,} & 
% U{\GG^{\wr}}^{\circ} 
% \ar[r]^>>>>>>>{U_{\pmb{\tau}^{\wr}}} & U\GG^{\wr} \ar[d]^-{\co_{k}\pars*{\K}}
% \\
% & & & \lMod{\zz}  
% }
%\]
%resulting in a $U\GG^{\wr}$-module $\co_{k}(\K)$.
%\end{conv}

We prove the main technical theorem of this paper, Theorem \ref{outsource},  which we shall specialize to obtain the theorems from the introduction.

\begin{thm} \label{outsource}
 Let $\pi \colon G \to G^{\wr}$ be a surjective map of $\atsi$-groups with a braided lift
\[
 \mathbf{F} = (F,J,\psi) \colon \pars*{\GG, \oplus, 0, \tau} 
 \to \pars*{\GG^{\wr}, \oplus^{\wr}, 0^{\wr}, \tau^{\wr}}
\] 
as in Definition \ref{braided-lift} and  write $K \coloneqq \ker \pi$. Suppose $k \geq 1$ is a fixed homological degree and
\[
 \weak_{0},\, \dots,\, \weak_{k-1},\,
 \rho_{0},\, \dots,\, \rho_{k-1} \in \zz_{\geq -1} \, ,
\]
such that under Convention \ref{conv-shift},
\begin{itemize}
 \item $U \GG$ satisfies $\hth{a,b}$ with $a \geq 2$,
 \vspace{0.1cm}
  \item $U \GG^{\wr}$ satisfies $\hth{a^{\wr},b^{\wr}}$ with $a^{\wr} \geq 2$,
  \vspace{0.1cm}
 \item for each $0 \leq q < k$, the braided lift
\(
 \pars*{\GG^{\wr}, \co_{q}(\K)} 
\)
 of the $\atsi_{G^{\wr}}$-module $\co_{q}(K)$ of Proposition \ref{Hk-lift} has the property that the $U\GG^{\wr}$-module $\co_{q}(\K)$ is polynomial of degree $\weak_{q}$ starting at $\rho_{q}+1$.
\end{itemize}
Then setting  
\begin{align*}
 A_{k} &:= \max\!\left(\{a(k-1) + b\} \cup \bigcup_{q=0}^{k-1}\left\{
 \rho_{q} - q + k + 1,\, \weak_{q} + a^{\wr}(k-q) + b^{\wr}
 \right\} 
 \right) \,,
 \vspace{0.1cm}
 \\
 B_{k} &:= \max\!\left(\{ak + b\} \cup \bigcup_{q=0}^{k-1}\left\{
 \rho_{q}-q + k + 2,\, \weak_{q} + a^{\wr}(k+1-q) + b^{\wr}
 \right\} 
 \right) \, ,
\end{align*}
the $\atsi_{G^{\wr}}$-module $\co_{k}(K)$ from Proposition \ref{factor-AQ} has 
\begin{birki}
 \item surjective stability degree $\leq A_{k}$, and
 \item central stability degree $\leq B_{k}$, with respect to the tail-central sequence in $G^{\wr} =\GG^{\wr}(0,1)$ from Proposition \ref{tc-from-mon}.
\end{birki}
%\begin{birki}
% \item The natural map 
%\begin{align*}
% \mathlarger{
% \Ind_{\Q_{n-1}}^{\Q_{n}} \co_{k}(\N_{n-1})
% \rarr
% \co_{k}(\N_{n})
% }
%\end{align*}
%of $\Q_{n}$-modules is surjective whenever $ n \geq A_{k} + 1$.
%\vspace{0.1cm}
%\item The natural maps
%\begin{align*}
%\mathlarger{
% \Ind_{\Q_{n-2}}^{\Q_{n}} \co_{k}(\N_{n-2}) \rightrightarrows   
% \Ind_{\Q_{n-1}}^{\Q_{n}} \co_{k}(\N_{n-1})
% \rarr
% \co_{k}(\N_{n})
% }
%\end{align*}
%of $\Q_{n}$-modules form a coequalizer diagram whenever $n \geq B_{k} + 1$.
%\end{birki}
\end{thm}
\begin{proof}
 By \cite[Proposition 3.39]{mpw-torelli-H2}, there is a spectral sequence of $U\GG^{\wr}$-modules
\begin{itemize}
 \item with $E^{2}_{p,q} = \wt{\co}_{p}^{U\GG^{\wr}}(\co_{q}(\K))$,
 \item supported in the region $\{(p,q) \in \zz^{2} : p \geq -1,\, q \geq 0\}$,
 \item with $\deg E^{\infty}_{p,q} \leq a(p+q) + b$. 
\end{itemize}
Invoking Theorem \ref{mpp-bound}, whenever $0 \leq q <k$ the $U\GG^{\wr}$-module $\co_{q}(\K)$ satisfies
\begin{align*}
 \deg E^{2}_{p,q} = \deg\wt{\co}_{p}^{U\GG^{\wr}}\big(\!
 \co_{q}(\K)\big) \leq \max\{p + \rho_{q} + 1,\,a^{\wr}p + b^{\wr} + \weak_{q}\}
\end{align*}
for every $p \geq -1$.
 Therefore by Lemma \ref{sseq-deg}
\begin{align*}
 \deg\wt{\co}_{-1}^{U\GG^{\wr}}\!\pars*{\co_{k}(\K)}
 &= \deg E^{2}_{-1,k}
 \\
 &\leq \max\!\left(\{\deg E^{\infty}_{-1,k}\} \cup \left\{
 \deg E^{2}_{k-q,q} : 0\leq q < k
 \right\} 
 \right)
 \\
 &\leq A_{k}
\end{align*}
and
\begin{align*}
 \deg\wt{\co}_{0}^{U\GG^{\wr}}\!\pars*{\co_{k}(\K)}
 &= \deg E^{2}_{0,k}
 \\
 &\leq \max\!\left( \{\deg E^{\infty}_{0,k}\} \cup \left\{
 \deg E^{2}_{k+1-q,q} : 0\leq q < k
 \right\} 
 \right)
 \\
 &\leq B_{k} \, .
\end{align*}
Noting that $A_{k} \leq B_{k}$ from their definitions, we are done by Theorem \ref{H-kriter}.
\end{proof}

\subsection{Proofs of the main theorems} \label{main-proofs}

\begin{proof}[Proof of \textbf{\emph{Theorem \ref{H2-IA}}}]
 We work under Convention \ref{conv-shift}. The braided lift 
 \[ 
   \cauf \to \cGL(\zz)
 \]
 of $\auf \to \GL(\zz)$ from Proposition \ref{lift-AF-to-GL} satisfies the following:
\begin{itemize}
  \item  $U \cauf$ satisfies $\hth{2,2}$ by \cite[Proposition 3.20, part (ii)]{mpw-torelli-H2}.
\vspace{0.1cm}
  \item Recalling part (3) of Example \ref{lift-GL}, $U \cGL(\zz)$ satisfies $\hth{2,2}$ by Theorem \ref{VIC-H3}.
\end{itemize}
For every $k \in \zz_{\geq 0}$, the $\atsi_{\GL(\zz)}$-module $\co_{k}(\ia)$ has a braided lift 
\[
 \pars*{\,\cGL(\zz), \co_{k}(\cia)\,}
\] 
from Corollary \ref{torelli-AF-lift} which satisfy the following: 
\begin{itemize}
\vspace{0.1cm}
 \item The $U \cGL(\zz)$-module $\co_{0}(\cia)$ is constant at $\zz$, hence is polynomial of degree $0$ starting at $0$.
 \vspace{0.1cm}
 \item The $U \cGL(\zz)$-module $\co_{1}(\cia)$ is polynomial of degree $3$ starting at $0$ by Theorem \ref{H1-ia}.
\end{itemize}
Therefore we can apply Theorem \ref{outsource} with $G = \auf$ and $G^{\wr} = \GL(\zz)$ at degree $k=2$, together with the parameters $a=b=a^{\wr}=b^{\wr}=2$ and $\weak_{0} = 0$, $\rho_{0} = -1$, $\weak_{1} = 3$, $\rho_{1} = -1$,  which yields
\begin{align*}
 A_{2} &= \max\!\left(\{4\} \cup \bigcup_{q=0}^{1}\left\{
 (-1) - q + 3,\, 3q + 2(2-q) + 2
 \right\} 
 \right)
 \\
 &=
 \max\!\left(\{4\} \cup \bigcup_{q=0}^{1}\left\{
 2 - q,\, q + 6
 \right\} 
 \right) = 7
\end{align*}
and
\begin{align*}
 B_{2} &= \max\!\left(\{6\} \cup \bigcup_{q=0}^{1}\left\{
 (-1)-q + 4,\, 3q + 2(3-q) + 2
 \right\} 
 \right)
 \\
 &= \max\!\left(\{6\} \cup \bigcup_{q=0}^{1}\left\{
 3-q,\, q + 8
 \right\} 
 \right) = 9 \, .
\end{align*}
\end{proof}
% In the $\torel$ situation we have $k=2$, $c=c'=d=d'=2$, $\rho_{0} = -1$, $\rho_{1} = 3$, $\weak_{0} = 0$, $\weak_{1} = 3$ so 
%\begin{align*}
% &\leq \max\!\left(\left\{
% 2,\, 6
% \right\} \cup
% \left\{
% 3 + 2,\, 3 + 4
% \right\} 
% \right) = 7
% \\
% &\leq \max\!\left(\left\{
% 3,\, 8
% \right\} \cup \left\{
% 6,\, 9
% \right\}
% \right)
%\end{align*}
%
%In the $\ia$ situation we have $k=2$, $c=c'=d=d'=2$, $\rho_{0} = \rho_{1} = -1$, $\weak_{0} = 0$, $\weak_{1} = 3$ so 
%
%\begin{align*}
% &\leq \max\!\left( \{4\} \cup \left\{
% 6 + 0,\,4 + 3
% \right\}
% \right) = 7
% \\
% &\leq \max\!\left( \{6\} \cup \left\{
% 8 + 0,\,6 + 3
% \right\} = 9 
% \right)
%\end{align*}

%\begin{align*}
% &\leq \max\!\left(\left\{
% 4,\,6,\,3+4
% \right\} 
% \right) = 7
% \\
% &\leq \max\!\left(\left\{
% 3,\, 8
% \right\} \cup
% \left\{
% 2,\, 9
% \right\} 
% \cup \{6\} \right) = 9
%\end{align*}

\begin{proof}[Proof of \textbf{\emph{Theorem \ref{H2-torelli}}}]
We work under Convention \ref{conv-shift}. The braided lift 
 \[ 
   \cMod\pars*{\surf^{1}} \to \cSp(\zz)
 \]
 of $\Mod\pars*{\surf^{1}} \to \Sp(\zz)$ from Proposition \ref{lift-Mod-to-Sp} satisfies the following:
\begin{itemize}
  \item  $U \cMod\pars*{\surf^{1}}$ satisfies $\hth{2,2}$ by \cite[Proposition 3.20, part (ii)]{mpw-torelli-H2}.
\vspace{0.1cm}
  \item Recalling part (3) of Example \ref{lift-Sp}, $U \cSp(\zz)$ satisfies $\hth{2,2}$ by Theorem \ref{SI-H3}.
\end{itemize}
For every $k \in \zz_{\geq 0}$, the $\atsi_{\Sp(\zz)}$-module $\co_{k}(\torel^{1})$ has a braided lift 
\[
 \pars*{\,\cSp(\zz), \co_{k}\pars*{\ctor^{1}}\,}
\]
from Corollary \ref{torelli-MCG-lift} which satisfy the following: 
\begin{itemize}
\vspace{0.1cm}
 \item the $U \cSp(\zz)$-module $\co_{0}(\ctor^{1})$ is constant at $\zz$, hence is polynomial of degree $0$ starting at $0$.
 \vspace{0.1cm}
 \item the $U \cSp(\zz)$-module $\co_{1}(\ctor^{1})$ is polynomial of degree $3$ starting at $4$, by Theorem \ref{H1-torel}.
\end{itemize}
Therefore we can apply Theorem \ref{outsource} with $G = \Mod\pars*{\surf^{1}}$ and $G^{\wr} = \Sp(\zz)$ at degree $k=2$, together with the parameters $a=b=a^{\wr}=b^{\wr}=2$ and $\weak_{0} = 0$,  $\rho_{0} = -1$, $\weak_{1} = \rho_{1} = 3$, which yields 
\begin{align*}
 A_{2} &= \max\!\left(\{4\} \cup \bigcup_{q=0}^{1}\left\{
 \rho_{q} - q + 3,\, 3q + 2(2-q) + 2
 \right\} 
 \right)
 \\
 &= \max\!\left(\{4\} \cup \bigcup_{q=0}^{1}\left\{
 \rho_{q} - q + 3,\, q + 6
 \right\} 
 \right) = \max\{4,2,6,5,7\} = 7
\end{align*}
and 
\begin{align*}
 B_{2} 
 &= \max\!\left(\{6\} \cup \bigcup_{q=0}^{1}\left\{
 \rho_{q}-q + 4,\, 3q + 2(3-q) + 2
 \right\} 
 \right)
 \\
 &= \max\!\left(\{6\} \cup \bigcup_{q=0}^{1}\left\{
 \rho_{q}-q + 4,\, q + 8
 \right\} 
 \right) = \max\{6,3,8,6,9\} = 9 \, .
\end{align*}
\end{proof}
% In the $\torel$ situation we have $k=2$, $c=c'=d=d'=2$, $\rho_{0} = -1$, $\rho_{1} = 3$, $\weak_{0} = 0$, $\weak_{1} = 3$ so 
%\begin{align*}
% &\leq \max\!\left(\left\{
% 2,\, 6
% \right\} \cup
% \left\{
% 3 + 2,\, 3 + 4
% \right\} 
% \right) = 7
% \\
% &\leq \max\!\left(\left\{
% 3,\, 8
% \right\} \cup \left\{
% 6,\, 9
% \right\}
% \right)
%\end{align*}

\begin{proof}[Proof of \textbf{\emph{Theorem \ref{congruence-larger-action}}}]
We work under Convention \ref{conv-shift}. The braided lift 
\[
 \cGL(R) \to \cSL^{\fU}(R/I)
\]
 of $\GL(R) \to \SL^{\fU}(R/I)$ from Proposition \ref{lift-GL-to-SLfU} satisfies the following:
\begin{itemize}
 \item Recalling part (3) of Example \ref{lift-GL}, $U \cGL(R)$ satisfies $\hth{2,s+1}$ by Theorem \ref{VIC-H3}.
  \item Noting $\sr(R/I) \leq s$ \cite[Proposition 1.5, part (1)]{lam-crash} and recalling part (3) of Example \ref{lift-SL-fU}, $U\cSL^{\fU}(R/I$) satisfies $\hth{2,s+1}$ by Theorem \ref{VIC-fU-H3}.
\end{itemize}

Fix $k \in \zz_{\geq 1}$. The $\atsi_{\SL^{\fU}(R)}$-module $\co_{k}\pars*{\GL(R,I)}$ has a braided lift 
\[
 \pars*{\,\cSL^{\fU}(R/I),\, \co_{k}\pars*{\cGL(R,I)}\,}
\]
from Corollary \ref{cong-lift} which satisfy that 
\begin{itemize}
 \item for each $0 \leq q < k$, the $U \cSL^{\fU}(R/I)$-module $\co_{q}(\cGL(R,I))$ is polynomial of degree $2q$ starting at
\begin{align*}
 M(q,s) := \begin{cases}
 0 & \text{if $q=0$,}
 \\
 2s+4 & \text{if $q=1$,}
 \\
 4q+2s+1 & \text{if $q \geq 2$,}
\end{cases}
\end{align*}
by Theorem \ref{cong-poly}.
\end{itemize}
 Therefore we can apply Theorem \ref{outsource} with $G = \GL(R)$ and $G^{\wr} = \SL^{\fU}(R/I)$ at, together with the parameters $a=a^{\wr}=2$, $b=b^{\wr}=s+1$, \[\weak_{q} = 2q \quad \text{and} \quad \rho_{q} = M(q,s) - 1\] 
for each $0 \leq q < k$, which yields (noting that $q \mapsto (M(q,s) - q)$ is an increasing sequence)
%\begin{align*}
% A_{k} 
% &= \max\!\left(\{2(k-1) + s+1\} \cup \bigcup_{q=0}^{k-1}\left\{
% M(q,s) - q + k + 1,\, 2q + 2(k-q) + s+1
% \right\} 
% \right)
% \\
% &= \max\!\left(\{2k+s+1\} \cup \left\{
% M(q,s) - q + k + 1 : 0 \leq q < k
% \right\} 
% \right)
% \\
%% &= 
%%\begin{cases}
%% \max\{s+3,1\} & \text{if $k=1$,}
%% \\
%% \max\!\left(\{s+5\} \cup \left\{
%% \rho_{q} - q + 3 : 0 \leq q < 2
%% \right\} 
%% \right) & \text{if $k=2$,}
%% \\
%% \max\!\left(\{2k+s+1\} \cup \left\{
%% \rho_{q} - q + k + 1 : 0 \leq q < k
%% \right\} 
%% \right) & \text{if $k \geq 3$,}
%%\end{cases}
%%\\
%&=
%\begin{cases}
% \max\{s+3,\,M(0,s)+2\}
% \\
% \max\!\left\{s+5,\, 
% M(1,s) + 2
% \right\} 
% \\
% \max\!\left\{2k+s+1,\, 
% M(k-1,s) + 2
% \right\} 
%\end{cases}
%=
%\begin{cases}
% s+3 & \text{if $k=1$,}
% \\
% 2s+5 & \text{if $k=2$,}
% \\
% 4k+2s-2 & \text{if $k \geq 3$,}
%\end{cases}
%\end{align*}
%and 
\begin{align*}
 B_{k} 
 &= \max\!\left(\{2k + s+1\} \cup \bigcup_{q=0}^{k-1}\left\{
 M(q,s)-q + k + 1,\, 2q + 2(k+1-q) + s+1
 \right\} 
 \right)
 \\
 &= \max\!\left(\{2k + s+3\} \cup \left\{
 M(q,s)-q + k + 1 : 0\leq q < k
 \right\} 
 \right)
 \\
% &= 
%\begin{cases}
% \max\{s+5,\rho_{0}+3\} & \text{if $k = 1$,}
% \\
% \max\!\left(\{s+7\} \cup \left\{
% \rho_{q}-q + 4 : 0\leq q < 2
% \right\} 
% \right) & \text{if $k=2$,}
% \\
% \max\!\left(\{2k + s+3\} \cup \left\{
% \rho_{q}-q + k + 2 : 0\leq q < k
% \right\} 
% \right) & \text{if $k \geq 3$,}
%\end{cases}
%\\
 &= 
\begin{cases}
 \max\{s+5,\,M(0,s)+2\} & \text{if $k = 1$,}
 \\
 \max\!\left\{s+7,\,
 M(1,s) + 2
 \right\} & \text{if $k=2$,}
 \\
 \max\!\left\{2k + s+3,\,
 M(k-1,s) + 2
 \right\} & \text{if $k \geq 3$.}
\end{cases}
\\
&= C(k,s) \, .
\end{align*}
\end{proof}

\begin{appendices}

\section{Homological stability ranges for related $\atsi$-groups}
As alluded to in the introduction, all of our applications involve a short exact sequence
\[
 1 \to K \to G \to Q \to 1
\] 
of $\atsi$-groups for which we improve the representation stability ranges for the $\atsi_{Q}$-module $\co_{k}(K)$
for a fixed $k \in \zz_{\geq 0}$. In each one of these, the $\atsi$-groups $G,Q$ exhibit (classical) homological stability. In this appendix we collect some of the known stable ranges of such $\atsi$-groups. Although the ranges here are not directly used in the main body of the paper, they provide a decent point of comparison. Furthermore the last few years have seen a flurry of activity in improving these homological stability ranges, which we make an effort to record.

\begin{defn}
 An $\atsi$-module $V$ is \textbf{stable in the range} $> N$ if whenever $n > N$, the induced map
 \[
  V_{(n,n+1)} \colon V_{n} \to V_{n+1}
 \]
of abelian groups is an isomorphism. 
\end{defn}

\begin{defn} \label{defn:stable-homology}
 Given an $\atsi$-group $G$, for every integer $k \geq 0$ we define 
\begin{align*}
  \mathbf{n}_{k}(G) &\coloneqq \inf \{N \geq -1 : \text{the $\atsi$-module }\co_{k}(G) \text{ is stable in the range $> N$}\} \\
 &\in \{-1,0,1,\dots\} \cup \{\infty\} \, ,
\end{align*}
  We say $G$ \textbf{has stable homology} if $\mathbf{n}_{k}(G) < \infty$ for every $k \geq 0$.
\end{defn}

\begin{rem}
The $\atsi$-group $\mathfrak{S}$ of Example \ref{symmetric-A} has stable homology with 
\[\mathbf{n}_{k}(\mathfrak{S}) \leq 2k-1 \quad \text{\cite[Corollary 6.7]{nakaoka-Sn}.}\]
\end{rem}

\begin{ex} \label{braid-A}
 The assignment $n \mapsto \beta_{n}$ where 
\[
 \beta_{n} \coloneqq \left\langle
 \sigma_{1}, \dots, \sigma_{n-1} : 
\begin{array}{l}
 \sigma_{i}\sigma_{j} = \sigma_{j}\sigma_{i} \text{ for } |i-j| > 1 \, ,
 \\
 \sigma_{i}\sigma_{j}\sigma_{i} = \sigma_{j}\sigma_{i}\sigma_{j} \text{ for } |i-j| = 1 \, .
\end{array}
 \right\rangle
\] 
 is the braid group on $n$ strands, together with the standard inclusions, defines an $\atsi$-group $\beta$ which has stable homology \cite{arnold-braids} with 
 \(\mathbf{n}_{k}(\beta) \leq 2k \) \cite[Proposition 1.5]{hatcher-wahl-mcg-3d}.
\end{ex}

\begin{rem}
 The $\atsi$-group $\auf$ from Section \ref{torelli-AF} has stable homology \cite[Theorem, page 39]{hatcher-autf-stab}. It follows from \cite[Theorem D]{hepworth-edge}, \cite[page 93, Corollary]{cv-outer} and fairly elementary algebraic topology that
\(
 \mathbf{n}_{k}(\auf) \leq 2k
\).
\end{rem}

\begin{rem} 
The $\atsi$-group $\GL(\zz)$ has stable homology \cite[Theorem 3.2]{charney-GL-dedekind} with
 \[\mathbf{n}_{k}(\GL(\zz)) \leq \floor*{\frac{3k+2}{2}} \quad \text{\cite[Theorem C]{kmp-improved-GL}.}\]
%\(\mathbf{n}_{k}(\GL(\zz)) \leq 2k \)
%\cite[page 92]{maazen-thesis}.
\end{rem}

\begin{rem} 
The $\atsi$-group $\Sp(\zz)$ has stable homology \cite[Corollary 4.5]{charney-symp} with
\[
 \mathbf{n}_{k}(\Sp(\zz)) 
 \leq \floor*{\frac{3k+1}{2}} \quad \text{\cite[Theorem A]{sierra-wahl-sp}.}\]
\end{rem}

\begin{rem} \label{A-grp-Mod}
 The $\atsi$-group $\Mod(\surf^{1})$ from Section \ref{torelli-MCG} has stable homology \cite[Theorem 0.1]{harer-stab}, \cite[Theorem 1.9]{ivanov-stab} with 
\[
 \mathbf{n}_{k}(\Mod(\surf^{1})) \leq 
\floor*{\frac{3k+1}{2}} \quad \text{\cite[Theorem 1]{boldsen-improved}.}\]
In fact we have equalities $\mathbf{n}_{1}(\Mod(\surf^{1})) = 2$ by (for instance) \cite[Theorem 5.1]{korkmaz-survey} and $\mathbf{n}_{2}(\Mod(\surf^{1})) = 3$ by combining \cite[Theorem 1.1]{korkmaz-stipsicz}, \cite[Theorem 4.9]{sakasai-lagrangian-mcg}. Moreover for every $q \in \zz_{\geq 2}$, \cite[proof of Corollary 5.14]{gkrw-cells-mcg} yields that
\[
 \co_{2q}\!\left(\Mod(\surf^{1}_{3q-1})\right) \to 
 \co_{2q}\!\left(\Mod(\surf^{1}_{3q})\right)
\] 
is not surjective; hence $\mathbf{n}_{2q}(\Mod(\surf^{1})) \in \{3q-1,\,3q\}$.
\end{rem}

\begin{rem} \label{GL-range}
 For a ring $R$ with $\sr(R) \leq s$, the $\atsi$-group $\GL(R)$ has stable homology \cite[Theorem 4.11]{vdk-GL-stab} with 
\[
\mathbf{n}_{k}(\GL(R)) \leq 
\begin{cases}
  k+s-1 & \text{if $k < s$,}    
  \\
  2k & \text{if $k \geq s$,}
\end{cases}
 \quad \text{\cite[Corollary 8.3]{suslin-stab}.}\]
%Note that both of the containments
%\[
% \{\text{$R$ : $\sr(R) < \infty$}\} \subseteq 
% \{\text{$R$ : $\GL(R)$ has stable homology}\} \subseteq
% \{\text{all rings}\}
%\]
%are strict.
\end{rem}

\begin{rem}
 For a field $\kk$, we can take $s=1$ in Remark \ref{GL-range} to get $\mathbf{n}_{k}(\GL(\kk)) \leq 2k$. On the other hand, this bound is attained only when $\kk = \kk_{2}$ and $k \in \{1,2,3\}$, by \cite[Theorem 1.1 and the preceding paragraph]{szymik-GL6} in conjunction with \cite[Corollary 2, page 579]{quillen-GL}, considering the following:
\begin{itemize}
 \item $\mathbf{n}_{k}(\GL(\kk_{2})) \leq \floor*{\frac{3(k+1)}{2}}$ by \cite[Theorem 3]{quillen-GL} and \cite[Theorem B]{gkrw-cells-GL}. We also note that \cite[Theorem 1.1]{wang-sharp-slope} yields $\mathbf{n}_{k}(\GL(\kk_{2})) = \floor*{\frac{3(k+1)}{2}}$ when $k \equiv 3,4 \!\pmod{8}$ and $\mathbf{n}_{k}(\GL(\kk_{2})) \geq \floor*{\frac{3k+1}{2}}$ when $k \equiv 1,7 \!\pmod{8}$.
 \vspace{0.1cm}
 \item When $q=p^{r} > 2$ for a prime $p$, we have
%\begin{itemize}[$\blacktriangle$]
% \item $\mathbf{n}_{k}(\GL(\kk_{q})) = \floor*{\frac{k-1}{2}}$ if $k \leq 2r(p-1) - 7$, and
% \item $\mathbf{n}_{k}(\GL(\kk_{q})) \leq k-r(p-1) + 2$ if $k > 2r(p-1)-7$.
%\end{itemize}
 \[
 \mathbf{n}_{k}(\GL(\kk_{q})) \leq 
\begin{cases}
 \floor*{\frac{k-1}{2}} & \text{if $k \leq 2r(p-1) - 7$,}
 \\ 
 k-r(p-1) + 2 & \text{otherwise,}
 \\
\end{cases}
\]
 by \cite[Theorem 3]{quillen-GL} and \cite[Theorem A]{gkrw-cells-GL}. In particular $\mathbf{n}_{k}(\GL(\kk_{4})) \leq k$ (in fact $\mathbf{n}_{2}(\GL(\kk_{4})) = 2$ \cite[Remark 6.2,(1)]{sprehn-wahl-stab}).
 \vspace{0.1cm}
 \item $\mathbf{n}_{k}(\GL(\kk)) \leq k-1$ for an infinite field $\kk$ by \cite[Theorem 3.25(b)]{nesterenko-suslin}. We also note that if furthermore the Milnor $K$-theory $\mathbf{K}^{M}_{k}(\kk) \neq 0$ (for instance when $k \geq 2$ and $\kk$ is uncountable \cite[Proposition 3]{springer-milnor-K}), then $\mathbf{n}_{k}(\GL(\kk)) = k-1$ by \cite[Theorem 3.25(a)]{nesterenko-suslin}. On the other hand if $\kk \in \kume*{\ov{\kk_{p}},\, \ov{\qq}}$, \cite[Corollary 9.14]{gkrw-cells-GL-infinite} yields $\mathbf{n}_{k}(\GL(\kk)) \leq \floor*{\frac{3k}{4}}$.
\end{itemize}
\end{rem}

Finally, it is worth mentioning that the $\hth{a,b}$ condition immediately yields homological stability in the following way:

\begin{thm} \label{groupoid-range}
 Let $G$ be an $\atsi$-group with a braided lift 
 \[\GG = \pars*{\GG,\oplus,0,\tau}\] 
as in Definition \ref{braided-lift}. Suppose $U\GG$ satisfies $\hth{a,b}$ with $a \geq 2$ under Convention \ref{conv-shift}. Then $G$ has stable homology (Definition \ref{defn:stable-homology}) with 
\[
 \mathbf{n}_{k}(G) \leq 
\begin{cases}
 2k+b
 & \text{if $a = 2$,}
 \\
 ak+b-1
 & \text{if $a \geq 3$.}
\end{cases}
\] 
\end{thm}
\begin{proof}
 The constant $U\GG$-module $\underline{\zz}$ is polynomial of degree $0$ starting at $0$, so it ``has polynomial degree $\leq 0$ at ranks $>-1$'' in the sense of \cite[Definition 2.40]{mpp-secondary}. Therefore by \cite[Corollary 4.5]{mpp-secondary} for every $i \geq 0$ we have
\begin{align*}
 \mathbf{n}_{i}(G) \leq \max\{-1 + 2i,\,\max\{ai,2i+1\} + b + 0 - 1\}
 &= \max\{ai+b-1,\, 2i+b\}
 \\
 &= 
\begin{cases}
 2i+b
 & \text{if $a = 2$,}
 \\
 ai+b-1
 & \text{if $a \geq 3$.}
\end{cases}
\end{align*}
\end{proof}

\section{Polynomial $\FI$-modules} \label{poly-FI}

Writing $\{*\}$ for a singleton, we equip the monoidal category $(\FI,\sqcup,\empt)$ of finite sets and injections with $\{*\}$-shifting (Definition \ref{x-shift}) and the function 
\begin{align*}
 \rk \colon \Iso(\FI) &\to \zz_{\geq 0}
 \\
 S &\mapsto |S| \, .
\end{align*}
As $\FI$ has a small skeleton and $\lMod{\zz}$ has a zero object, the notions in Section \ref{categorical} are defined for $\FI$-modules. We shortly write $\lMod{\FI} \coloneqq [\FI, \lMod{\zz}]$ for the category of $\FI$-modules.

The main objectives of this appendix is to recast some of the homological interpretations from \cite{bahran-polynomial} and obtain Corollary \ref{poly-truncate}.

\subsection{Relationship with homological invariants of $\FI$-modules}
We shall write $\FB$ for the category of finite sets and \textbf{bijections} (so that the category of $\FB$-modules is $\lMod{\FB} \coloneqq [\FB,\lMod{\zz}]$), and
\[\induce \colon \lMod{\FB} \to \lMod{\FI}\] 
for the left adjoint of the restriction functor $\Res_{\FB}^{\FI} \colon \lMod{\FI} \rarr \lMod{\FB}$.

\begin{prop} \label{induced-poly}
 Let $r \geq -1$ be an integer and $W$ be an $\FB$-module with $\deg W \leq r$. Then the induced $\FI$-module $\induce(W)$ is of polynomial of degree $r$ starting at $0$.
\end{prop}
\begin{proof}
 It can be checked from the description \cite[Definition 2.2.2]{cef} that 
 \[
  \kert\!\left(\induce(W)\right) = 0 \, ,
 \] 
 and by \cite[Lemma 4.4]{ce-homology} we have 
 \[
  \deriv\!\left(\induce(W)\right) \cong 
  \induce\!\left(\shift{W}{}\right) \, .
 \]
 We can now prove the claim for $V \coloneqq \induce(W)$ by induction on $r$:
\begin{itemize}
 \item If $r=-1$, then $W=0$, so $V = 0$ is polynomial of degree $-1$ starting at 0.
 \item If $r \geq 0$, then $\kert V = 0$ is polynomial of degree $-1$ starting at $0$, and since $\deriv V \cong \induce\!\left(\shift{W}{}\right)$ with $\deg \shift{W}{} \leq r-1$, by the induction hypothesis $\deriv V$ is polynomial of degree $r-1$ starting at $0 = \max\{0,-1\}$.
\end{itemize}
Thus $V$ is polynomial of degree $r$ starting at $0$ by Definition \ref{poly-defn}.
\end{proof}

\paragraph{Local cohomology and local degree.}
 An $\FI$-module $M$ is \textbf{torsion} if for every finite set $S$ and $x \in M_{S}$, there exists an injection $\alpha \colon S \emb T$ such that $M_{\alpha}(x) = 0 \in M_{T}$. We write
\begin{align*}
 \locoh{0} \colon \lMod{\FI} \rarr \lMod{\FI}
\end{align*}
for the functor which assigns an $\FI$-module its largest torsion $\FI$-submodule.
 The functor $\locoh{0}$ is left exact and for each $j \geq 1$, we write $\locoh{j} := \operatorname{R}^{j}\!\locoh{0}$ for the $j$-th right derived functor of $\locoh{0}$. Next, for every $j \geq 0$ we write 
\begin{align*}
 h^{j}(V) &:= \deg \locoh{j}(V) 
 \\
 &\in \{-1,0,1,\dots\} \cup \{\infty\}
% \\
% \local(V) &:= \max\{h^{j}(V) : j \geq 0\}
% \\
% &\in \{-1,0,1,\dots\} \cup \{\infty\} \, ,
\end{align*}
for every $\FI$-module $V$. 
%We call $\local(V)$ the \textbf{local degree} of $V$.

\paragraph{Stable degree.} For an $\FI$-module $V$, we set
\begin{align*}
 \weak(V) &:= \min\{r \geq -1 : \deriv^{r+1}(V) \text{ is torsion}\}
 \\
 &\in \{-1,0,1,\dots\} \cup \{\infty\} \, ,
\end{align*}
and call it the \textbf{stable degree} of $V$.

\begin{thm}\label{bah-poly}
 For an $\FI$-module $V$ and integers $r \geq -1$, $M \geq 0$, the following are equivalent:
\begin{birki}
 \item $V$ is polynomial of degree $r$ starting at $M$.
 \item $\weak(V) \leq r$, and for every $j \geq 0$ with $h^{j}(V) \geq 0$ we have $h^{j}(V) + j \leq M-1$.
\end{birki}
\end{thm} 
\begin{proof}
The characterization in \cite[Theorem B]{bahran-polynomial} involves the invariant $\reg(V)$, which is defined via the $\FI$-homology $\cofi{0}$ and its derived functors (see \cite[page 210]{bahran-polynomial}): it says that $V$ satisfying (1) is equivalent to 
 \[\weak(V) \leq r \quad \text{and} \quad \reg(V) \leq M-1 \, .\]
Thus it suffices to show that $\reg(V) \leq M - 1$ if and only if
\[
 \text{for every $j \geq 0$ with $h^{j}(V) \geq 0$, we have $h^{j}(V) + j \leq M - 1$,}
\]
and this follows from \cite[Theorem 1.1]{nss-regularity}.
\end{proof}

\begin{defn} \label{defn-truncate}
 Let $V$ be an $\FI$-module and $N \in \zz_{\geq 0}$. We write $V_{\geq N}$ for the unique $\FI$-submodule of $V$ with 
\begin{align*}
 (V_{\geq N})_{S} = 
\begin{cases}
 0 & \text{if $|S| < N$,}
 \\
 V_{S} & \text{if $|S| \geq N$.}
\end{cases}
\end{align*} 
\end{defn}

\begin{lem} \label{locoh-truncate}
 Given an $\FI$-module $V$ and $N \in \zz_{\geq 0}$. the following hold:
\begin{birki}
 \item $\weak(V) = \weak(V_{\geq N})$.
 \vspace{0.1cm}
 \item $h^{0}(V_{\geq N}) \leq h^{0}(V) \leq \max\{h^{0}(V_{\geq N}),\,N-1\}$.
 \vspace{0.1cm}
 \item $h^{1}(V) \leq h^{1}(V_{\geq N}) \leq \max\{N-1,\,h^{1}(V)\}$.
 \vspace{0.1cm}
 \item $h^{j}(V) = h^{j}(V_{\geq N})$ for every $j \geq 2$.
\end{birki}
\end{lem}
\begin{proof}
Writing $U \coloneqq V/V_{\geq N}$, we have $\deg U \leq N-1$ so that 
\[
 \deg \deriv U \leq \deg \shift{U}{} \leq \max\{-1, N-2\} < \infty
\]
and inductively $\deg \deriv^{a} U \leq \max\{-1, N-a-1\} < \infty$ for every $a \geq 0$.

On the other hand, by taking $p=1$ in \cite[(18)]{ce-homology} there is a short exact sequence
\begin{align*}
 0 \rarr \deriv\!\left(
 \co_{1}^{\deriv^{\!a-1}}\!(U)
 \right) \rarr \co_{1}^{\deriv^{\!a}}\!(U) 
 \rarr \kert\!\left(
 {\deriv^{\!a-1}}(U) 
 \right)\rarr 0 \, .
\end{align*}
for every $a \geq 1$ (here $\co_{1}^{F}$ denotes the first left derived functor of $F$) so that
\begin{align*}
 \deg \co_{1}^{\deriv^{\!a}}\!(U)
 &= \max\!\left\{
 \deg \deriv\!\left(
 \co_{1}^{\deriv^{\!a-1}}\!(U)
 \right),\,
 \deg \kert\!\left(
 {\deriv^{\!a-1}}(U) 
 \right)
 \right\}
 \\
 &\leq \max\!\left\{
 \deg \shift{\!\left(
 \co_{1}^{\deriv^{\!a-1}}\!(U)
 \right)}{},\,
 \deg 
 {\deriv^{\!a-1}}(U) 
 \right\}
 \\
 &\leq \max\!\left\{-1,\,
 \deg 
 \co_{1}^{\deriv^{\!a-1}}\!(U) - 1,\,
 N-a
 \right\}
\end{align*}
and hence by induction $\deg \co_{1}^{\deriv^{\!a}}\!(U) \leq \max\{-1,\,N-a\} < \infty$ for every $a \geq 0$.

Applying $\deriv^{\!a}$ to the short exact sequence 
\begin{align*}
 0 \rarr V_{\geq N} \rarr V \rarr U \rarr 0 ,\tag{$\star$}
\end{align*}
results in a long exact sequence of the form
\[
 \co_{1}^{\deriv^{\!a}}\!(U) \to
 \deriv^{\!a}\!\left(V_{\geq N}\right) \to \deriv^{\!a}V \to \deriv^{\!a}U \to 0 \, .
\]
Since being torsion is preserved under taking submodules, quotients, extensions and $\co_{1}^{\deriv^{\!a}}\!(U)$, $\deriv^{\!a}U$ are already finite degree, we conclude that $\deriv^{\!a}\!\left(V_{\geq N}\right)$ is torsion if and only if $\deriv^{\!a}V$ is torsion: (1) follows.

Applying $\locoh{0}$ to $(\star)$, using \cite[Corollary 2.3]{bahran-polynomial} the associated long exact sequence yields an exact sequence 
\begin{align*}
 0 \rarr \locoh{0}(V_{\geq N}) \rarr \locoh{0}(V) \rarr U \rarr \locoh{1}(V_{\geq N}) \rarr \locoh{1}(V) \rarr 0
\end{align*}
and an isomorphism $\locoh{j}(V_{\geq N}) \cong \locoh{j}(V)$ for every $j \geq 2$. Now (2), (3), (4) follow by taking degrees.
\end{proof}

\begin{cor} \label{poly-truncate}
 For an $\FI$-module $V$ and integers $r \geq - 1$, $M,N \geq 0$, the following hold: 
\begin{birki}
 \item If $V_{\geq N}$ is polynomial of degree $r$ starting at $M$, then $V$ is polynomial of degree $r$ starting at $\max\{M,N\}$.
 \item If $V$ is polynomial of degree $r$ starting at $M$, then $V_{\geq N}$ is polynomial of degree $r$ starting at $\max\{M,N+1\}$.
\end{birki}
\end{cor}
\begin{proof}
(1) Suppose $V_{\geq N}$ is polynomial of degree $r$ starting at $M$. Then by part (1) of Lemma \ref{locoh-truncate} and Theorem \ref{bah-poly}, we have \[\weak(V) = \weak(V_{\geq N}) \leq r \, .\]
 Next, suppose $j \geq 0$ such that $h^{j}(V) \geq 0$. There are three cases:
\begin{itemize}
 \item $j=0$ and $h^{0}(V_{\geq N}) \geq 0$. Then by part (2) of Lemma \ref{locoh-truncate} and Theorem \ref{bah-poly}, we have
 \[h^{0}(V) \leq \max\{M-1,N-1\} = \max\{M,N\} - 1 \, .\]
 \item $j=0$ and $h^{0}(V_{\geq N}) = -1$. Then by part (2) of Lemma \ref{locoh-truncate} we have \[h^{0}(V) \leq N-1 \, .\]
 \item $j \geq 1$. Then by part (3) and (4) of Lemma \ref{locoh-truncate} we have $h^{j}(V_{\geq N}) \geq h^{j}(V) \geq 0$ so by Theorem \ref{bah-poly} we get
 \[h^{j}(V) + j \leq h^{j}(V_{\geq N}) + j \leq M-1 \, .\]
\end{itemize}
In all cases we have $h^{j}(V) + j \leq \max\{M,N\}-1$. We get the desired conclusion by Theorem \ref{bah-poly}.

(2) Suppose $V$ is polynomial of degree $r$ starting at $M$. Then by part (1) of Lemma \ref{locoh-truncate} and Theorem \ref{bah-poly}, we have \[\weak(V_{\geq N}) = \weak(V) \leq r \, .\]
 Next, suppose $j \geq 0$ such that $h^{j}(V_{\geq N}) \geq 0$. There are three cases:
\begin{itemize}
 \item $j=1$ and $h^{1}(V) \geq 0$. Then by part (3) of Lemma \ref{locoh-truncate} and Theorem \ref{bah-poly}, we have
 \[h^{1}(V_{\geq N}) + 1 \leq \max\{N,h^{1}(V) + 1\} \leq \max\{N,M-1\} = \max\{M,N+1\} - 1\, .\]
 \item $j=1$ and $h^{1}(V) = -1$. Then by part (3) of Lemma \ref{locoh-truncate} we have \[h^{1}(V_{\geq N}) + 1 \leq N \, .\]
 \item $j \neq 1$. Then by part (2) and (4) of Lemma \ref{locoh-truncate} we have $h^{j}(V) \geq h^{j}(V_{\geq N}) \geq 0$ so by Theorem \ref{bah-poly} we get
 \[h^{j}(V_{\geq N}) + j \leq h^{j}(V) + j \leq M-1 \, .\]
\end{itemize}
In all cases we have $h^{j}\pars*{V_{\geq N}} + j \leq \max\{M,N+1\}-1$. We get the desired conclusion by Theorem \ref{bah-poly}.
\end{proof}

\section{Euclidean domains}
The objective of this appendix is to observe that the definition of \emph{Euclidean ring} in \cite{maazen-thesis} (and hence \cite{vdk-loo-symp} since it refers to \cite{maazen-thesis}) is equivalent to what is more or less the standard notion of \emph{Euclidean domain} today found in textbooks. Both Theorem \ref{PBC-Euc-conn} and Theorem \ref{hu-conn} are obtained using \cite{maazen-thesis}, making the correspondence between the different definitions in this Appendix necessary. For the main results Theorem \ref{H2-IA} and Theorem \ref{H2-torelli} of the paper, Theorem \ref{PBC-Euc-conn} and Theorem \ref{hu-conn} are invoked only for the Euclidean domain $\zz$.
\begin{defn}[{\cite[Definition 1.1]{graves-gaussian}}]
 Given a commutative ring $R$, a function 
\begin{align*}
 \phi \colon R - \{0\} \rarr \zz_{\geq 0}
\end{align*}
is \textbf{Euclidean} if for every $b \in R-\{0\}$ and $a \in R$, either $b$ divides $a$ or there exists $r \in R- \{0\}$ such that $\phi(r) < \phi(b)$ and $[a] = [r]$ in the quotient ring $R/b$.
\end{defn}

\begin{defn}[{\cite[page 44, Definition 4.1]{maazen-thesis}}] \label{M-E}
 A commutative ring $R$ is called \textbf{M-Euclidean} if there is a function $\mu \colon R \to \zz_{\geq 0}$ such that the following hold:
\begin{birki}
 \item $\mu(1) = 1$, and $\mu(a) = 0$ if and only if $a = 0$.
 \item If $a,b \neq 0$, then $\mu(ab) \geq \mu(b)$.
 \item There are functions $\kappa, \rho \colon R \times (R-\{0\}) \to R$ with the following properties:
\begin{itemize}
 \item For $a \in R$ and $b \in R-\{0\}$, $a = \kappa(a,b)b + \rho(a,b)$ and $\mu(\rho(a,b)) < \mu(b)$.
 \item If $\mu(a) < \mu(b)$, then $\kappa(a,b) = 0$ and $\rho(a,b) = a$.
\end{itemize}
\end{birki}
\end{defn}

\begin{prop} \label{maazen-euc}
 For a commutative ring $R$, the following are equivalent: 
\begin{birki}
 \item $R$ is an integral domain that has a Euclidean function.
 \item $R$ is M-Euclidean.
\end{birki}
\end{prop}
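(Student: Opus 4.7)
The plan is to show that both conditions amount to the same classical notion of a Euclidean domain, so that the proof reduces to an unwinding of definitions. The definition in \cite[Definition 1.1]{graves-gaussian} phrases the Euclidean axiom intrinsically in terms of the quotient ring $R/b$ (either $b \mid a$, or some nonzero residue of strictly smaller $\phi$-value realizes the class of $a$), while Maazen's \cite[Definition 4.1]{maazen-thesis} is formulated via the familiar division algorithm $a = qb + r$ with $r=0$ or $\phi(r) < \phi(b)$ in an integral domain.

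For $(i) \Rightarrow (ii)$, I would fix $a \in R$ and $b \in R - \{0\}$ and split into two cases. If $b \mid a$, pick $q$ with $a = qb$ and take $r = 0$; this is the trivial branch of the division algorithm. Otherwise, Graves' axiom produces $r \in R - \{0\}$ with $\phi(r) < \phi(b)$ and $[a] = [r]$ in $R/b$; lifting this congruence to $R$ yields a $q \in R$ with $a - qb = r$, which is precisely the non-trivial branch of Maazen's division-algorithm clause. The integral domain hypothesis is part of (i), so nothing further is needed.

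For $(ii) \Rightarrow (i)$, I would start with an $R$ satisfying Maazen's definition (in particular an integral domain) and given $a \in R$, $b \in R - \{0\}$ apply the division algorithm to obtain $q, r \in R$ with $a = qb + r$ and either $r = 0$ or $\phi(r) < \phi(b)$. The first possibility says $b \mid a$; the second gives $r \in R - \{0\}$ with $\phi(r) < \phi(b)$ and $[a] = [r]$ in $R/b$. These are exactly the two alternatives in Graves' condition.

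The only real obstacle — and the one step that requires actual care — is faithful bookkeeping of the exact wording of Maazen's original definition: verifying that the integral domain hypothesis is indeed part of his setup (or is forced by his axioms), and verifying that no extra condition on $\phi$ (such as multiplicativity or a monotonicity requirement like $\phi(ab) \geq \phi(a)$) is imposed which would make Maazen's notion strictly stronger than Graves'. Once this is confirmed against the thesis, the proposition becomes a formal dictionary between the two conventions, with no further content.
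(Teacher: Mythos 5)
Your plan works fine in the direction $(2) \Rightarrow (1)$: Maazen's condition (iii) directly hands you a division algorithm that implies Graves' axiom, and (as the paper notes, and you should not just assume) integrality follows from Maazen's axioms rather than being hypothesized separately. But in the direction $(1) \Rightarrow (2)$ your proposal has a genuine gap, and it sits exactly at the point you flag and then wave away. Maazen's Definition 4.1 does impose extra structure beyond the bare division axiom: it requires (part ii) the monotonicity $\mu(x) \leq \mu(xy)$ for nonzero $x,y$, and (part iii together with the final sentence) that quotient and remainder be given by \emph{functions} $\kappa, \rho$ which are suitably normalized (e.g.\ return the trivial division when $\mu(a) < \mu(b)$). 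A Euclidean function $\phi$ in Graves' sense need not satisfy the monotonicity condition, so you cannot simply feed $\phi$ into Maazen's definition; "lifting the congruence" gives you the division axiom but says nothing about $\phi(ab) \geq \phi(a)$.

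The paper's proof resolves this by \emph{replacing} the given Euclidean function by Motzkin's minimal one $\phi_R$, which is built canonically from the ring (via the filtration $A_{R,0} \subseteq A_{R,1} \subseteq \cdots$), then shifting it to $\mu = \phi_R + 1$ on nonzero elements. The monotonicity of $\mu$ is not obvious even for $\phi_R$; the paper proves it by showing that $x \mapsto \min\{\phi_R(xy) : y \neq 0\}$ is itself a Euclidean function and invoking the minimality of $\phi_R$ in $\E(R)$. Separately, the requirement that quotient and remainder be genuine functions (with the stated normalization) is handled by an explicit axiom-of-choice argument and a case split on whether $\mu(a) < \mu(b)$. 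None of these steps is a "formal dictionary"; the proposition is precisely the nontrivial claim that the two conventions coincide, and your proposal stops short of proving it. To repair it you would need to (a) recognize that a new Euclidean function must be manufactured, (b) supply the monotonicity argument for it, and (c) address the functional / normalized form of the division algorithm.
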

\begin{proof}
 (1) $\imp$ (2): As stated in \cite[Lemma 1.3]{graves-gaussian} (originally due to Motzkin \cite{motzkin-euclidean}), writing $\pi_{b} : R \rarr R/b$ for the canonical projection for each $b \in R$ and setting
\begin{align*}
 \E(R) &:= \{\phi \colon R - \{0\} \rarr \zz_{\geq 0} : \text{$\phi$ is a Euclidean function}\} \, ,
 \\
 A_{R,0} &:= \{0\} \cup R^{\times} \, ,
 \\
 A_{R,j} &:= A_{R,j-1} \cup \{b \in R : \pi_{b}(A_{R,j-1}) = R/b\} \text{ for every } j \geq 1 \, ,
\end{align*}
we have $R = \bigcup_{j=0}^{\infty} A_{R,j}$, and the function 
\begin{align*}
 \phi_{R} \colon R - \{0\} &\rarr \zz_{\geq 0}
 \\
 r &\mapsto \min\{j : r \in A_{R,j}\}
\end{align*}
belongs to $\E(R)$; moreover $\phi_{R}(r) \leq \phi(r)$ for every $\phi \in \E(R)$ and $r \in R-\{0\}$. We now define 
\begin{align*}
 \mu \colon R &\rarr \zz_{\geq 0}
 \\
 r &\mapsto 
\begin{cases}
 \phi_{R}(r) + 1 & \text{if $r \neq 0$,}
 \\
 0 & \text{if $r = 0$.}
\end{cases}
\end{align*}
We immediately see that $\mu$ satisfies part (1) of Definition \ref{M-E}. Further, for every $a,b \in R$ with $b\neq 0$, the set
\begin{align*}
 S_{a,b} &= \{(q,r) \in R^{2} : a = qb +r \text{ and } \mu(r) < \mu(b)\}
 \\
 &= \{(q,r) \in R^{2} : a = qb +r \text{ and } \mu(r) < \phi_{R}(b) + 1\}
 \\
 &= \{(q,0) \in R^{2} : a = qb\} \cup \{(q,r) \in R^{2} : a = qb +r\,, r\neq0, \text{ and } \phi_{R}(r) < \phi_{R}(b)\}
\end{align*}
is nonempty as $\phi_{R} \in \E(R)$. Therefore invoking the axiom of choice, there exist functions $\wt{\kappa}, \wt{\rho} \colon R \times (R-\{0\}) \rarr R$ such that 
\begin{align*}
 (\wt{\kappa}(a,b),\,\wt{\rho}(a,b)) \in S_{a,b} \quad \text{for every } a \in R ,\,b \in R-\{0\} \, .
\end{align*}
Noting that $\mu(a) < \mu(b) \imp (0,a) \in S_{a,b}$, we can define functions
\begin{align*}
 \kappa(a,b) := 
\begin{cases}
 0 & \text{if $\mu(a) < \mu(b)$,}
 \\
 \wt{\kappa}(a,b) & \text{otherwise,}
\end{cases}
\quad \text{and} \quad
\rho(a,b) := 
\begin{cases}
 a & \text{if $\mu(a) < \mu(b)$,}
 \\
 \wt{\rho}(a,b) & \text{otherwise,}
\end{cases}
\end{align*}
and still have
\begin{align*}
 (\kappa(a,b),\,\rho(a,b)) \in S_{a,b} \quad \text{for every } a \in R ,\,b \in R-\{0\} \, .
\end{align*}
It follows that $\mu$ satisfies part (3) of Definition \ref{M-E}.

Finally we claim that the function 
\begin{align*}
 \phi' \colon R - \{0\} &\rarr \zz_{\geq 0}
 \\
 x &\mapsto \min\{\phi_{R}(xy) : y \in R - \{0\}\}
\end{align*}
belongs to $\E(R)$ (note that $\phi'$ is well-defined because $R$ is a domain). To that end, fix $a,b \in R$ such that $b \neq 0$ and $b \nmid a$. We know that $\phi'(b) = \phi_{R}(bc)$ for some $c \in R-\{0\}$. We have $bc \neq 0$ as $R$ is a domain, and $bc \nmid a$ since $b \nmid a$. Therefore as $\phi_{R} \in \E(R)$, there exist $q,r \in R$ with $r \neq 0$ such that 
\begin{align*}
 a = q(bc) + r = (qc)b + r \quad \text{with $\phi_{R}(r) < \phi_{R}(bc) = \phi'(b)$.}
\end{align*}
But evidently $\phi'(r) \leq \phi_{R}(r)$, so $\phi'(r) < \phi'(b)$, proving our claim. By the minimality of $\phi_{R}$ in $\E(R)$, we conclude that $\phi_{R}(x) \leq \phi'(x)$ for every $x \in R - \{0\}$, which means that $\phi_{R}(x) \leq \phi_{R}(xy)$ and hence $\mu(x) \leq \mu(xy)$ for every $x,y \in R - \{0\}$. This verifies part (2) of Definition \ref{M-E}.

(2) $\imp$ (1): To see $R$ has to be a domain, note that if $a,b \in R-\{0\}$, then $\mu(ab) \geq \mu(b) \geq 1$ by parts (1) and (2) of Definition \ref{M-E}. Hence $ab \neq 0$ by part (1) of Definition \ref{M-E}. It is also evident by part (3) of Definition \ref{M-E} that $\mu|_{R - \{0\}} \in \E(R)$.
\end{proof}

\begin{defn}
 A commutative ring $R$ is called a \textbf{Euclidean domain} if it satisfies one (hence both) of the equivalent conditions in Proposition \ref{maazen-euc}.
\end{defn}

\section{Some spaces associated to rings} \label{specific}
Let us recall that our rings are, by Convention \ref{ring-conv}, nonzero and stably finite. This has some convenient consequences.

\begin{prop} \label{rank-surj}
 Let $R$ be a ring (with Convention \ref{ring-conv}). If $m,n \in \zz_{\geq 0}$ such that there is an $R$-linear surjection
 \[
  R^{m} \to R^{n}
 \]
of free right (or left) $R$-modules, then $m \geq n$.
\end{prop}
\begin{proof}
 This follows from \cite[Proposition 1.22]{lam-lectures}, considering \cite[Definition 1.20]{lam-lectures}.
\end{proof}

\begin{rem} \label{wd-free-rank}
 Thanks to Proposition \ref{rank-surj}, every finitely generated free right (or left) free $R$-module $F$ has a well-defined $\rank_{R}(F) \in \zz_{\geq 0}$ : every pair of $R$-linear isomorphisms
\[
 R^{\oplus X} \cong F \cong R^{\oplus Y}
\] 
for some sets $X,Y$ has to satisfy $|X| = |Y| < \infty$, so we can declare this common finite cardinality as $\rank_{R}(F)$. To see this, as $F$ is finitely generated, there exists $m \in \zz_{\geq 0}$ and a surjection $R^{m} \to R^{\oplus X}$. If $X$ were infinite, we could pick a finite subset $X_{0} \subseteq X$ with $|X_{0}| > m$ and get a surjection $R^{m} \to R^{\oplus X_{0}}$, contradicting Proposition \ref{rank-surj}. Thus $X$ is finite, and similarly $Y$ is finite. Writing $m = |X|$ and $n = |Y|$, we get an isomorphism $R^{m} \cong R^{n}$ so that applying Proposition \ref{rank-surj} twice yields $m \leq n$ and $n \leq m$, so $m=n$.
\end{rem}

Recall that if (P) is a property defined for right $R$-modules (for a ring $R$), a right $R$-module $M$ is said to be \textbf{stably} (P) if for some $a \in \zz_{\geq 0}$ the right $R$-module $M \oplus R^{a}$ is (P).

\begin{rem}
 As a generalization of Remark \ref{wd-free-rank}, every finitely generated free right (or left) \textbf{stably free} $R$-module $M$ has a well-defined $\rank_{R}(M) \in \zz_{\geq 0}$ : every pair of $R$-linear isomorphisms
 \[
  M \oplus R^{a} \cong R^{\oplus X} \quad \text{and} \quad M \oplus R^{b} \cong R^{\oplus Y}
 \]
for some $a,b \in \zz_{\geq 0}$ and sets $X,Y$ has to satisfy $|X|, |Y| < \infty$ and 
\[
 0 \leq |X| - a = |Y| - b \, ,
\]
so we can declare this common non-negative integer as $\rank_{R}(M)$. To see this, as $M \oplus R^{a}$ is finitely generated, an argument similar to the one in Remark \ref{wd-free-rank} yields $|X| < \infty$ and similarly $|Y| < \infty$. Writing $a' \coloneqq |X|$ and $b' \coloneqq |Y|$, we have
\[
 R^{a'+b} \cong R^{a'} \oplus R^{b} \cong M \oplus R^{a} \oplus R^{b} \cong M \oplus R^{b} \oplus R^{a} \cong R^{b'} \oplus R^{a} \cong R^{a + b'} \, ,
\]
so that applying Proposition \ref{rank-surj} twice yields $a' + b = a+b'$, and hence $a' - a = b'-b$. A third application of Proposition \ref{rank-surj} to the surjection $R^{a'} \cong R^{\oplus X} \to R^{a}$ finally yields $a \leq a'$.
\end{rem}

\begin{lem} \label{glr0}
 Let $R$ be a ring (with Convention \ref{ring-conv}). If $M$ is a stably free right $R$-module with $\rank_{R}(M) = 0$, then $M=0$.
\end{lem}
\begin{proof}
 There exists $n \in \zz_{\geq 0}$ with $M \oplus R^{n} \cong R^{n}$. We get $M=0$ by \cite[Proposition 1.7, part (2)]{lam-lectures}.
\end{proof}

\begin{prop} \label{rank-split-inj}
  Let $R$ be a ring (with Convention \ref{ring-conv}). If $m,n \in \zz_{\geq 0}$ such that there is a \textbf{split} $R$-linear injection
 \[
  R^{n} \to R^{m}
 \]
of free right (or left) $R$-modules, then $n \leq m$.
\end{prop}
\begin{proof}
 The splitting is necessarily an $R$-linear surjection $R^{m} \to R^{n}$. Now use Proposition \ref{rank-surj}.
\end{proof}

Given a monoidal category $(\me,\oplus,0)$ where the unit object $0$ is initial with objects $x,y$ in $\me$, there is a semi-simplicial set given by the composite
 \[
  \OI_{+}^{\opp} \xrightarrow{F_{x}^{\opp}} \me^{\opp}
  \xrightarrow{\Hom_{\me}(-,y)} \Set \tag{$\dagger$} \label{sfx}
 \]
where $F_{x}$ is (the restriction of) the functor in Theorem \ref{initial}.

\begin{defn} \label{pb-u-defn}
 For a ring $R$, let $\VI(R)$ be the category of finitely generated free right $R$-modules and \textbf{splittable} $R$-linear maps, where a splittable map is one with a left inverse. Let $\VI'(R)$ be the wide subcategory of $\VI(R)$ given by 
\[
 \Hom_{\VI'(R)}(W,V) \coloneqq \left\{
 \alpha \in \Hom_{\VI(R)}(W,V) : \coker(\alpha) \text{ is $R$-free}
 \right\} \, .
\] 
 
Both $\VI(R)$ and $\VI'(R)$ are monoidal via direct sum $\oplus$ where the unit object $0$ is initial. For every free right $R$-module $V$ of finite rank, taking $x$ to be $R$ and $y$ to be $V$ in (\ref{sfx}), we write $\UuU_{\star}(V)$ and $\PB_{\star}(V)$ for the corresponding semi-simplicial set, respectively for $\VI(R)$ and $\VI'(R)$. More explicitly, they have the description
\begin{align*}
 \UuU_{\star}(V) \colon \OI_{+}^{\opp} &\to \Set \\
 [k] &\mapsto \Hom_{\VI(R)}\!\left(
 R^{\oplus [k]},V\right)
\end{align*}
and
\begin{align*}
 \PB_{\star}(V) \colon \OI_{+}^{\opp} &\to \Set \\
 [k] &\mapsto \Hom_{\VI'(R)}\!\left(
 R^{\oplus [k]},V\right) \, .
\end{align*}
\end{defn}

We shall sometimes make use of an invariant that measures how proper the inclusion of semi-simplicial sets $\PB_{\star}(V) \emb \UuU_{\star}(V)$ is:

\begin{defn} \label{glr-defn}
 Let $R$ be a ring and $s \in \zz_{\geq 1}$. We write $\glr(R) \leq s$ if every finitely generated stably free right $R$-module of rank $\geq s$ is free.
\end{defn}

\begin{rem}
 Stably free modules which are \textbf{not} finitely generated are in fact always free \cite[Proposition 1.2]{gabel-thesis}, \cite[Proposition 4.2]{lam-serre}.
\end{rem}

\begin{thm} \label{glr-eqv}
 Given a ring $R$ and 
% an ideal J contained in the Jacobson radical $\rad(R)$ and 
 $s \in \zz_{\geq 1}$, the following are equivalent:
\begin{birki}
 \item $\glr(R) \leq s$.
 \item Whenever $M \oplus R \cong R^{n}$ and $n > s$, we have $M \cong R^{n-1}$.
 \item For every $n > s$, the left action of $\GL_{n}(R)$ on the set of unimodular column vectors of size $n$ is transitive.
 \item For every $n > s$, the right action of $\GL_{n}(R)$ on the set of unimodular row vectors of size $n$ is transitive. 
 \item A unimodular column vector of size $n$ can be completed to a matrix in $\GL_{n}(R)$ whenever $n > s$.
 \item For every free right $R$-module $V$ of finite rank, the inclusion
 \[
 \PB_{\star}(V) \emb \UuU_{\star}(V)
 \]
 of semi-simplicial sets is full on the $k$-simplices with $0 \leq k \leq \rank_{R}(V) - s - 1$, that is, $\PB_{k}(V) = \UuU_{k}(V)$ in this range.
% \item $\glr(R/J) \leq s$.
\end{birki}
\end{thm}
\begin{proof}
For every $n \in \zz_{\geq 1}$, consider the statement
\[
 T(n) : \text{Whenever $M \oplus R \cong R^{n}$ we have $M \cong R^{n-1}$.}
\] 
In \cite[page 407]{mr-nonc-book}, $T(n)$ being true means that  that $n-1$ is in the ``general linear range'' of $R$, hence in \cite[11.1.14]{mr-nonc-book} the inequality in (1) is essentially \emph{defined} as the statement
\[
\text{``$T(n)$ holds for every $n \geq s-1$''}
\] (which is (2) here). The theorem right after \cite[11.1.14]{mr-nonc-book} shows this is equivalent to our Definition \ref{glr-defn} of (1), and in addition equivalent to (5). The equivalence with (3) and (4) follows from \cite[Proposition 1.12]{mr-nonc-book}.

(1) $\imp$ (6): For every $k \leq \rank_{R}(V) - s- 1$, every 
\[ \alpha \in  \UuU_{k}(V) 
  = \Hom_{\VI(R)}\!\left(
 R^{\oplus [k]},V\right)
 \\
 = \Hom_{\VI(R)}\!\left(
 R^{k+1},V\right)
\]
satisfies $\coker(\alpha) \oplus R^{k+1} \cong V$, so the right $R$-module $\coker(\alpha)$ is stably free of rank $\rank_{R}(V) - (k + 1) \geq s$ and hence already free: this means
\[ \alpha \in  \Hom_{\VI'(R)}\!\left(
 R^{k+1},V\right) =
 \Hom_{\VI'(R)}\!\left(
 R^{\oplus [k]},V\right) = \PB_{k}(V) \, .
 \\
\]

(6) $\imp$ (2): Suppose $M \oplus R \cong R^{n}$ and $n > s$. Let us write $\alpha$ for this isomorphism precomposed with the inclusion $R \emb M \oplus R$ into the second coordinate. Then $\alpha$ is a morphism in $\VI(R)$ from $R$ to $R^{n}$, in other words $\alpha \in \UuU_{0}(R^{n})$ with
\[
 \rank_{R}(R^{n}) - s - 1 = (n -1) - s \geq 0 \, .
\]
Then $\alpha \in \PB_{0}(R^{n})$ as well, which means $\coker(\alpha) \cong M$ is free: $M \cong R^{n-1}$.

\end{proof}

\begin{prop} \label{glr-leq-sr}
 For every ring $R$ and $s \in \zz_{\geq 1}$, we have
 \[
  \sr(R) \leq s \,\,\, \imp \,\,\, \glr(R) \leq s \, .
 \]
\end{prop}
\begin{proof}
 See for instance \cite[page 417]{mr-nonc-book} or  \cite[Proposition 5.9]{rw-wahl-stab}.
\end{proof}
\begin{rem} \label{match-vdk}
 Specifying a splittable map $f \colon R^{\oplus [k]} \to V$ is equivalent to specifying a sequence
 \[
  (v_{0}, \dots,  v_{k})
 \]
 of elements in $V$ which form a basis of a free direct summand in $V$, via the mapping $f \mapsto (f(e_{0}), \dots, f(e_{k}))$ where $(e_{0}, \dots, e_{k})$ is the standard ordered $R$-basis of $R^{\oplus [k]}$. As a result, the face poset $\Face(\UuU_{\star}(V))$ is isomorphic to the poset $\mathcal{O}(V) \cap \UU$ of \cite[2.3, 2.4]{vdk-GL-stab}.
\end{rem}

\begin{rem} \label{match-maazen}
 Specifying a splittable map $R^{\oplus [k]} \to V$ with $R$-free cokernel is equivalent to specifying a sequence
 \[
  (v_{0}, \dots,  v_{k})
 \]
 of elements in $V$ which \textbf{can be completed to} a basis of the whole $V$, or with more common terminology an ordered \textbf{partial basis} of $V$. As a result, the face poset $\Face(\PB_{\star}(R^{n}))$ is isomorphic to the poset denoted
\begin{itemize}
 \item $\mathcal{O}(n,R)$ in \cite[page 27, Examples.(2)]{maazen-thesis},
 \vspace{0.1cm}
 \item $\mathscr{O}(n)$ in \cite[Appendix]{vdk-loo-symp}.
\end{itemize}
\end{rem}

\begin{defn} \label{u-link-defn}
 Fix a ring $R$, a free right $R$-module $V$ of finite rank, $a \in \zz_{\geq 0}$, and a splittable $R$-linear map $f \colon R^{\oplus [a]} \to V$. We define a semi-simplicial subset $\UuU_{\star}(V)_{f}$ of $\UuU_{\star}(V)$ as follows:
\begin{align*}
 \UuU_{\star}(V)_{f} \colon \OI_{+}^{\opp} &\to \Set \\
 [k] &\mapsto \left\{g \colon R^{\oplus [k]} \to V \text{ splittable} :
 f\oplus g \colon R^{\oplus [a]} \oplus R^{\oplus [k]} \to V \text{ still splittable}
 \right\} \, .
\end{align*}
If in addition $\coker(f)$ is $R$-free, we define a semi-simplicial subset $\PB_{\star}(V)_{f}$ of $\PB_{\star}(V)$ as follows:
\begin{align*}
 \PB_{\star}(V)_{f} \colon \OI_{+}^{\opp} &\to \Set \\
 [k] &\mapsto \left\{\!\!\!\!\text{  
\begin{tabular}{l}
 $g \colon R^{\oplus [k]} \to V$
 splittable \\
 and $\coker(g)$ is $R$-free
\end{tabular}
}
    \!\!\!\!:\!\!\!\!
\text{
\begin{tabular}{l}
 $f\oplus g \colon R^{\oplus [a]} \oplus R^{\oplus [k]} \to V$ splittable
 \\
 and $\coker(f \oplus g)$ is $R$-free
\end{tabular}
}
\!\!\!\!\right\} \, .
\end{align*}
\end{defn}

%\begin{rem}
% Suppose $\glr(R) \leq s$. Then for a splittable $R$-linear $f \colon R^{\oplus [a]} \to V$ with $R$-free cokernel, the inclusion
% \[
% \PB_{\star}(V)_{f} \emb \UuU_{\star}(V)_{f}
% \]
% of semi-simplicial sets is full on the $k$-simplices with 
%\[
% k \leq \rank_{R}(V) - s - a - 2
%\] 
%\end{rem}

\begin{rem} \label{match-vdk-link}
 In a similar vein to Remark \ref{match-vdk}, for a splittable $R$-linear $f \colon R^{\oplus [a]} \to V$, the face poset $\Face(\UuU_{\star}(V)_{f})$ is isomorphic to the poset denoted
 \[ \mathcal{O}(V) \cap \UU_{(f(e_{0}), \,\dots\,, f(e_{a}))} \]
in \cite[2.3, 2.4]{vdk-GL-stab}.
\end{rem}

\begin{rem} \label{match-maazen-link}
 In a similar vein to Remark \ref{match-maazen}, for a splittable $R$-linear $f \colon R^{\oplus [a]} \to R^{n}$ with $R$-free cokernel, the face poset $\Face\pars*{\PB_{\star}(R^{n})_{f}}$ is isomorphic to the poset denoted
\begin{itemize}
 \item $\mathcal{O}(n,R)_{(f(e_{0}), \,\dots\,, f(e_{a}))}$ in \cite[page 27, Examples.(2) and page 28]{maazen-thesis},
 \vspace{0.1cm}
 \item $\mathscr{O}(n)_{(f(e_{0}), \,\dots\,, \,f(e_{a}))}$ in \cite[Appendix]{vdk-loo-symp}.
\end{itemize}
\end{rem}

\begin{defn} \label{u-simp-defn}
 Let $R$ be a ring and $V$ be a free right $R$-module of finite rank. Building off of Remark \ref{match-vdk}, we define an unordered version of $\UuU_{\star}(V)$: we set
\[
 \UuU_{\circ}(V) \coloneqq \{\empt \neq F \subseteq V : \text{$F$ is a basis of a free direct summand of $V$}\}
\]
so that $\UuU_{\circ}(V)$ is a set of nonempty finite subsets of $V$ with the property that 
\[
 \empt \neq F' \subseteq F \in \UuU_{\circ}(V) 
 \quad \text{implies} \quad
 F' \in \UuU_{\circ}(V) \, ,
\]
in other words an (abstract) simplicial complex whose vertex set is a subset of $V$.
\end{defn}

\begin{rem} \label{match-U}
 The simplicial complex $\UuU_{\circ}(R^{n})$ of Definition \ref{u-simp-defn} is 
\(
 U(R^{n})
\)
in the notation of \cite[the proof of Theorem 5.10]{rw-wahl-stab}.
\end{rem}

\begin{defn} \label{su-pbc-defn}
  Let $R$ be a ring and $\VIS(R)$ be the category of free right $R$-modules of finite rank and \textbf{split} $R$-linear maps, where a split map from $W$ to $V$ is a pair $(\alpha,\alpha')$ such that $\alpha \colon W \to V$, $\alpha' \colon V\to W$ with $\alpha'\circ\alpha = \id_{W}$. Let $\VIC(R)$ be the wide subcategory of $\VIS(R)$ given by 
\[
 \Hom_{\VIC(R)}(W,V) \coloneqq \left\{
 (\alpha,\alpha') \in \Hom_{\VIS(R)}(W,V) : \coker(\alpha) \text{ is $R$-free}
 \right\} \, .
\] 
 
Both $\VIS(R)$ and $\VIC(R)$ are monoidal via direct sum $\oplus$ where the unit object $0$ is initial. For every free right $R$-module $V$ of finite rank, taking $x$ to be $R$ and $y$ to be $V$ in (\ref{sfx}), we write $\SU_{\star}(V)$ and $\PBC_{\star}(V)$ for the corresponding semi-simplicial set, respectively for $\VIS(R)$ and $\VIC(R)$. More explicitly, they have the description
\begin{align*}
 \SU_{\star}(V) \colon \OI_{+}^{\opp} &\to \Set \\
 [k] &\mapsto \Hom_{\VIS(R)}\pars*{R^{\oplus [k]},V}
\end{align*}
and
\begin{align*}
 \PBC_{\star}(V) \colon \OI_{+}^{\opp} &\to \Set \\
 [k] &\mapsto \Hom_{\VIC(R)}\pars*{R^{\oplus [k]},V} \, .
\end{align*}
\end{defn}

\begin{thm} \label{glr-pbc}
 Given a ring $R$ and $s \in \zz_{\geq 1}$, the following are equivalent:
\begin{birki}
 \item $\glr(R) \leq s$.
 \item For every free right $R$-module $V$ of finite rank, the inclusion
 \[
 \PBC_{\star}(V) \emb \SU_{\star}(V)
 \]
 of semi-simplicial sets is full on the $k$-simplices with $k \leq \rank_{R}(V) - s - 1$, that is, $\PBC_{k}(V) = \SU_{k}(V)$ in this range.
\end{birki}
\end{thm}
\begin{proof}
 The proof of the equivalence involving the inclusion $\PB_{\star}(V) \emb \UuU_{\star}(V)$ in Theorem \ref{glr-eqv} goes through essentially verbatim.
\end{proof}

\begin{rem} \label{conv-to-fd}
 Let $R$ be a ring and $V$ be a nonzero finitely generated free right $R$-module. By Proposition \ref{rank-split-inj}, all four of the semi-simplicial sets 
 \[
  \PB_{\star}(V),\, \UuU_{\star}(V),\, \PBC_{\star}(V),\, \SU_{\star}(V)
 \]
 have dimension equal to $\rank_{R}(V) - 1$.
\end{rem}

\begin{cor} \label{glr-one}
 Given a ring $R$, the following are equivalent:
\begin{birki}
 \item $\glr(R) \leq 1$.
 \item For every free right $R$-module $V$ of finite rank, $\PB_{\star}(V) = \UuU_{\star}(V)$.
 \item For every free right $R$-module $V$ of finite rank, $\PBC_{\star}(V) = \SU_{\star}(V)$. 
\end{birki}
\end{cor}
\begin{proof}
For a fixed free right $R$-module $V$ with $\rank_{R}(V) < \infty$ and $k \in \zz_{\geq 0}$, let us write $F(V,k)$ for the claim that the inclusion
 \[
 \iota \colon 
 \PB_{\star}(V) \emb \UuU_{\star}(V)
 \]
 of semi-simplicial sets is full on the $k$-simplices; so that whenever $V \neq 0$ the statement $F(V,\rank_{R}(V)-1)$ holds by Proposition \ref{glr0}.

By the equivalence (1) $\Leftrightarrow$ (6) in Theorem \ref{glr-eqv}, the statement (1) is equivalent to the following bullet point:
\begin{itemize}
 \item For every free right $R$-module $V$ of finite rank and $0 \leq k \leq \rank_{R}(V) - 2$, the statement $F(V,k)$ holds. 
\end{itemize}
Recalling Remark \ref{conv-to-fd}, we get the equivalence (1) $\Leftrightarrow$ (2). A similar argument in tandem with Theorem \ref{glr-pbc} (instead of Theorem \ref{glr-eqv}) shows (1) $\Leftrightarrow$ (3).
\end{proof}

\begin{thm} \label{glr-dedekind}
 For every Dedekind domain $R$ we have $\glr(R) \leq 1$.
\end{thm}
\begin{proof}
 It follows from (for instance) \cite[Theorem]{gmr-dedekind} that part (5) of Theorem \ref{glr-eqv} holds with $s=1$.
\end{proof}

\begin{rem} \label{match-W}
 Fix integers $n \geq s \geq 1 $. Then the semi-simplicial set $\SU_{\star}(R^{n})$ of Definition \ref{su-pbc-defn} is 
\[
 W_{n-s}(R^{s},R)_{\bul}
\]
in the notation of \cite{rw-wahl-stab}, when the $W$-construction \cite[Definition 2.1]{rw-wahl-stab} is applied in the monoidal category $U \lMod{fR}$ \cite[Section 5.3]{rw-wahl-stab}. \end{rem}

\begin{rem} \label{su-elts}
 Writing $V^{*} \coloneqq \Hom_{R}(V,R)$, specifying a split map $(\alpha, \alpha') \colon R^{\oplus [k]} \to V$ is equivalent to specifying a sequence
 \[
  \big( (v_{0},g_{0}), \dots,  (v_{k},g_{k}) \big)
 \]
 of elements in $V \times V^{*}$ such that $g_{i}(v_{j}) = \delta_{ij}$.
\end{rem}

\begin{defn} \label{su-simp-defn}
 Let $R$ be a ring and $V$ be a free right $R$-module of finite rank. Building off of Remark \ref{su-elts}, we define an unordered version of $\SU_{\star}(V)$: we set
\[
 \SU_{\circ}(V) \coloneqq \left\{
 \empt \neq F \subseteq V \times V^{*} :  \text{ 
\begin{tabular}{l}
 $g(v) = 1$ whenever $(v,g) \in F$, and
 \\
 $g(w) = 0$ whenever $(v,g) \neq (w,h)$ both in $F$
\end{tabular}
 }
 \!\right\} \, .
\]
Note that $\SU_{\circ}(V)$ is a set of nonempty finite subsets of $V \times V^{*}$ with the property that 
\[
 \empt \neq F' \subseteq F \in \SU_{\circ}(V) 
 \quad \text{implies} \quad
 F' \in \SU_{\circ}(V) \, ,
\]
in other words an (abstract) simplicial complex whose vertex set is a subset of $V \times V^{*}$.
\end{defn}

\begin{rem} \label{match-S}
 Fix integers $n \geq s \geq 1 $. Then the simplicial complex $\SU_{\circ}(R^{n})$ of Definition \ref{su-simp-defn} is isomorphic to
\[
 S_{n-s}(R^{s},R)
\]
in the notation of \cite{rw-wahl-stab}, when the $S$-construction \cite[Definition 2.8]{rw-wahl-stab} is applied in the monoidal category $U \lMod{fR}$ \cite[Section 5.3]{rw-wahl-stab}.
\end{rem}

\begin{defn} \label{pbc-U}
 For a \textbf{commutative} ring $R$ and a subgroup $\fU \leq R^{\times}$, let $\VIC(R,\fU)$ be the subcategory of $\VIC(R)$ with $\{R^{n} : n\in \zz_{\geq 0}\}$ as its set of objects so that 
 \[
  \Hom_{\VIC(R,\fU)}\!\left(R^{a},R^{b}\right) \coloneqq 
\begin{cases}
 \Hom_{\VIC(R)}\!\left(R^{a},R^{b}\right) & \text{if $a \neq b$,}
 \\
 \{(\alpha,\alpha') \in
 \Hom_{\VIC(R)}\!\left(R^{a},R^{a}\right) :
 \det(\alpha) \in \fU\} & \text{if $a=b$.}
\end{cases}
 \]
 See \cite[pages 2528-2529]{putman-sam} for alternative definitions of this category. The category $\VIC(R,\fU)$ is monoidal via direct sum $\oplus$ where the unit object $0$ is initial. For every $n \in \zz_{\geq 0}$, taking $x$ to be $R$ and $y$ to be $R^{n}$ in (\ref{sfx}), we write $\PBC^{\fU}_{\star}(R^{n})$ for the corresponding semi-simplicial set for $\VIC(R,\fU)$. More explicitly, it has the description
\begin{align*}
 \PBC^{\fU}_{\star}(R^{n}) \colon \OI_{+}^{\opp} &\to \Set \\
 [k] &\mapsto \Hom_{\VIC(R,\fU)}\!\left(
 R^{\oplus [k]},R^{n}\right) \, .
\end{align*}
\end{defn}

\begin{rem} \label{all-but-top}
 Let $R$ be a commutative ring with a subgroup $\fU \leq R^{\times}$. For every $n \in \zz_{\geq 0}$, the inclusion
 \[
 \PBC^{\fU}_{\star}(R^{n}) \emb \PBC_{\star}(R^{n})
 \]
 of semi-simplicial sets is full on the $k$-simplices with $k \leq n - 2$, that is, $\PBC^{\fU}_{k}(R^{n}) = \PBC_{k}(R^{n})$ in this range.
\end{rem}

\begin{defn} \label{hu-defn}
 For a \textbf{commutative} ring $R$, let $\SI(R)$ be the category of finitely generated free $R$-modules equipped with a symplectic form (shortly symplectic $R$-modules) and $R$-linear maps preserving the symplectic form (shortly symplectic maps). The main example of a symplectic $R$-module is $H_{R}$ which has rank $2$ with an $R$-basis $\{x,y\}$ equipped with the unique symplectic form given by 
\begin{align*}
  \omega \colon H_{R} \times H_{R} &\to R \\
  (x,y) &\mapsto 1 \, .
\end{align*}
$\SI(R)$ is monoidal via direct sum $\oplus$ where the unit object $0$ is initial. For every symplectic $R$-module $V$, taking $x$ to be $H_{R}$ and $y$ to be $V$ in (\ref{sfx}), we write $\HU_{\star}(V)$ for the corresponding semi-simplicial set. More explicitly, it has the description
\begin{align*}
 \HU_{\star}(V) \colon \OI_{+}^{\opp} &\to \Set \\
 [k] &\mapsto \Hom_{\SI(R)}\!\left(
 H_{R}^{\oplus [k]},V\right) \, .
\end{align*}
\end{defn}

\begin{rem} \label{match-hu}
 Specifying a symplectic map $H_{R}^{\oplus [k]} \to V$ is equivalent to specifying a sequence
 \[
  (x_{0}, y_{0}, \dots,  x_{k}, y_{k})
 \]
 of elements in $V$ such that the symplectic form in $V$ satisfies
\begin{align*}
 \gen{x_{i},y_{j}} &= \delta_{ij} = -\gen{y_{i},x_{j}} \\
 \gen{x_{i},x_{j}} &= 0 = \gen{y_{i},y_{j}}
\end{align*}
As a result, the face poset $\Face(\HU_{\star}(V))$ is isomorphic to the poset denoted
\begin{itemize}
 \item $HU(V)$ in \cite[Section 3]{charney-symp},
 \vspace{0.1cm}
 \item $\mathcal{HU}(R^{2n})$ in \cite[Section 7]{mirzaii-vdk-unitary} when $V = H_{R}^{\oplus n}$,
 \vspace{0.1cm}
 \item $\mathscr{HU}_{g}(R)$ in \cite[Section 4]{vdk-loo-symp} when $V = H_{R}^{\oplus g}$.
\end{itemize}
\end{rem}

\subsection{Connectivity} 
Recalling Remark \ref{match-vdk}, Remark \ref{match-vdk-link}, the argument in \cite[proof of Lemma 5.10]{rw-wahl-stab} can be compartmentalized as the following three results:
\begin{prop} \label{join}
 Let $R$ be a ring and $n \in \zz_{\geq 0}$. Then in the sense of \emph{\cite[Definition 3.2]{hatcher-wahl-mcg-3d}}, the simplicial complex $\SU_{\circ}(R^{n})$ of Definition \ref{su-simp-defn} is a ``join complex over'' the simplicial complex $\UuU_{\circ}(R^{n})$ of Definition \ref{u-simp-defn} through the natural forgetful map
 \[
 \pi \colon \SU_{\circ}(R^{n}) \to \UuU_{\circ}(R^{n}) \, .
 \]
\end{prop}
\begin{proof}
 Recalling Remark \ref{match-U} and Remark \ref{match-S}, this follows from the argument that starts at the bottom two paragraphs of \cite[page 603]{rw-wahl-stab}.
\end{proof}

\begin{prop}[\cite{rw-wahl-stab}] \label{wCM}
 Let $R$ be a ring and $s \in \zz_{\geq 1}$ such that $\glr(R) \leq s$. Consider the following statement:\\
 $B(n)$: In the sense of \emph{\cite[Definition 2.11]{rw-wahl-stab}}, the simplicial complex $\UuU_{\circ}(R^{n})$ is ``weakly Cohen--Macaulay of dimension $n-s$''.
 
If $B(n)$ holds for every integer $n \geq s$, then for every $n \in \zz_{\geq 0}$ both 
\begin{itemize}
 \item the simplicial complex $\SU_{\circ}(R^{n})$ of Definition \ref{su-simp-defn}, and
 \item the semi-simplicial set $\SU_{\star}(R^{n})$ of Definition \ref{su-pbc-defn},
\end{itemize}
 are
\ds{
\floor*{\frac{n-s-2}{2}}
}-connected.
\end{prop}
\begin{proof}
For $n=0$ we have
\(
\floor*{\frac{n-s-2}{2}} \leq  \floor*{\frac{-3}{2}} = -2
\)
so there is nothing to show. For $1 \leq n \leq s$, we have 
\(
\floor*{\frac{n-s-2}{2}} \leq  -1
\)
and each three of the spaces is nonempty hence $(-1)$-connected. Consequently we may assume from now on that $n > s$.

The penultimate paragraph of \cite[proof of Lemma 5.10]{rw-wahl-stab} shows that for every $p$-simplex $\sigma$ of $\SU_{\circ}(R^{n})$ with $0 \leq p < n-s$, we have
\[
 \Link_{\SU_{\circ}(R^{n})}(\sigma) \cong \SU_{\circ}(R^{n-p-1})
\]
provided that the implication
\[
 E(p,n): \quad R^{p+1} \oplus H \cong R^{n} \,\,\imp\,\, H \cong R^{n-p-1}
\]
holds: indeed when $0 \leq p < n-s$, such $H$ is stably free of rank $n-p-1 \geq s \geq \glr(R)$ so has to be free. 

Consequently, under the forgetful map $ \pi \colon \SU_{\circ}(R^{n}) \to \UuU_{\circ}(R^{n})
$, every $p$-simplex $\sigma$ of $\SU_{\circ}(R^{n})$ with $0 \leq p < n-s$ satisfies
\[
 \pi\! \left(
 \Link_{\SU_{\circ}(R^{n})}(\sigma)
 \right) \cong \UuU_{\circ}(R^{n-p-1}) \, ,
\]
which is weakly Cohen--Macaulay of dimension $n-p-1-s$ by $B(n-p-1)$ (which holds because $n-p > s$). In particular, this link is weakly Cohen--Macaulay of dimension $n-s-p-2$. Together with the validity of $B(n)$ and Proposition \ref{join}, we may apply  \cite[Theorem 3.6]{hatcher-wahl-mcg-3d} to $\pi$ and conclude that $\SU_{\circ}(R^{n})$ is $\floor*{\frac{n-s-2}{2}}$-connected.

To conclude the same connectivity for $\SU_{\star}(R^{n})$, recalling Remark \ref{match-W} and Remark \ref{match-S}, thanks to \cite[Theorem 2.10]{rw-wahl-stab} it suffices to verify the following with the notation of the paper \cite{rw-wahl-stab}:
\begin{birki}
 \item The monoidal category $(U \lMod{fR}, \oplus, 0)$ of \cite[Section 5.3]{rw-wahl-stab} is ``locally homogeneous'' \cite[Definition 1.2]{rw-wahl-stab} and ``locally standard'' \cite[Definition 2.5]{rw-wahl-stab} at $(R^{s},R)$.
 \item ``Condition (A)'' \cite[page 558]{rw-wahl-stab} holds for all $n$.
\end{birki}
Here (2) can be checked by hand, or alternatively it follows from (1) and \cite[Proposition 2.9]{rw-wahl-stab} as $U\lMod{fR}$ is symmetric monoidal. 

For the ``locally homogeneous``'' part of (1), we need to verify the two properties \textbf{LH1} and \textbf{LH2} in \cite[Definition 1.2]{rw-wahl-stab} which can be reduced to statements about the monoidal groupoid $(\lMod{fR}, \oplus , 0)$ itself (before the $U$-construction) via \cite[Theorem 1.10]{rw-wahl-stab}. The if clause of \cite[Theorem 1.10, part (b)]{rw-wahl-stab} holds in $\lMod{fR}$ at every pair of objects. The if clause of \cite[Theorem 1.10, part (a)]{rw-wahl-stab} for the pair $(R^{s},R)$ is (see \cite[Definition 1.9]{rw-wahl-stab}) that the implication
\[
 Y \oplus R^{p+1} \cong R^{s+n} \quad \imp
 \quad Y \cong R^{s + n - p - 1}
\]
should hold for every finitely generated right $R$-module $Y$ and whenever $0 \leq p  < n$. This implication is precisely the $E(p,n+s)$ we introduced in the beginning of the proof which we have shown to hold whenever
\[
 0 \leq p < (n+s) - s = n \, ,
\]
as desired.

For the ``locally standard'' part of (1), we can invoke \cite[Proposition 2.6]{rw-wahl-stab} which amounts to checking that for every $n$ the semi-simplicial set $\SU_{\star}(R^{n})$ is determined by its vertices and has distinct vertices, which are evident.
\end{proof}

\begin{prop}[\cite{rw-wahl-stab}] \label{ular}
 Let $R$ be a ring and $1 \leq s \leq n$ integers such that the following hold:
\begin{birki}
 \item The semi-simplicial set $\UuU_{\star}(R^{n})$ of Definition \ref{pb-u-defn} is $(n-s-1)$-connected.
 \item For every $a \in \zz_{\geq 0}$ and splittable $R$-linear map $f \colon R^{\oplus [a]} \to V$, the semi-simplicial set $\UuU_{\star}(R^{n})_{f}$ of Definition \ref{u-link-defn} is $(n-s-a-2)$-connected.
\end{birki} 
Then condition $B(n)$ of Proposition \ref{wCM} holds. 
\end{prop}
\begin{proof}
 This follows from the argument in the paragraphs adjacent to the commutative diagram in \cite[page 603]{rw-wahl-stab}, which we shall now expand in our notation. Inspecting the involved definitions, assuming (1) and (2), we need to verify that the simplicial complex $\UuU_{\circ}(R^{n})$ satisfy the following:
\begin{birki}[(i)]
 \item $\UuU_{\circ}(R^{n})$ is $(n-s-1)$-connected.
 \item For every $a \geq 0$ and $a$-simplex $F \in \UuU_{\circ}(R^{n})$, the link $\Link_{\UuU_{\circ}(R^{n})}(F)$ is 
\vspace{-0.1cm} 
 \[
 \text{$(n-s-a-2)$-connected.}
 \]
\end{birki}
For every free right $R$-module $V$ of finite rank, writing $e_{0}, \dots, e_{k}$ for the standard basis elements of $R^{\oplus [k]}$, the assignments
\begin{align*}
  p_{k} \colon \Hom_{\VI(R)}(R^{\oplus[k]}, V) &\to \{\text{$k$-simplices of $\UuU_{\circ}(V)$}\} \\
 f &\mapsto \{f(e_{i}) : i \in [k]\} \, ,
\end{align*}
as $k$ varies, patch to a cellular map 
\[
 p \colon \left| \UuU_{\star}(V) \right| \to \left| \UuU_{\circ}(V) \right|
\]
between the geometric realizations. Moreover, choosing a total order on the set of vertices of $\UuU_{\circ}(V)$ defines a cellular map 
\[
 j \colon \left| \UuU_{\circ}(V) \right| \to \left| \UuU_{\star}(V) \right|
\]
such that $pj = \id_{\left|\UuU_{\circ}(V)\right|}$. Therefore (i) follows from (1) and the functoriality of homotopy groups.

Next, given $b \geq 0$ and a $b$-simplex $F \in \UuU_{\circ}(V)$, we have
\begin{align*}
 \Link_{\UuU_{\circ}(V)}(F)
 &= \left\{
 G \in \UuU_{\circ}(V) : G \cup F \in \UuU_{\circ}(V)
 \text{ and } G \cap F = \empt
 \right\} \, ,
\end{align*}
so by letting $f = j_{b}(F)$, the maps $p,i$ above restrict to maps
\begin{align*}
  p &\colon \left| \UuU_{\star}(V)_{f} \right| \to 
  \left| \Link_{\UuU_{\circ}(V)}(F) \right| 
  \\ 
  j &\colon \left| \Link_{\UuU_{\circ}(V)}(F) \right| \to 
  \left| \UuU_{\star}(V)_{f} \right|   
\end{align*}
such that $pj = \id_{\left|\UuU_{\circ}(V)_{f}\right|}$. Therefore (ii) follows from (2) and the functoriality of homotopy groups.
\end{proof}

\begin{cor} \label{nihai}
  Let $R$ be a ring and $s \in \zz_{\geq 1}$ such that $\glr(R) \leq s$. Consider the following statements:\\
  $C(n)$: The semi-simplicial set $\UuU_{\star}(R^{n})$ is $(n-s-1)$-connected.\\
  $C'(n)$: For every $q \in \zz_{\geq 0}$ and splittable $R$-linear map $f \colon R^{\oplus [q]} \to R^{n}$ between right $R$-modules, the semi-simplicial set $\UuU_{\star}(R^{n})_{f}$ is $(n-s-q-2)$-connected.
  
  If both $C(n)$ and $C'(n)$ hold for every integer $n \geq s$, then for every $n \in \zz_{\geq 0}$ the semi-simplicial set $\SU_{\star}(R^{n})$ is
 \[
\text{$\floor*{\frac{n-s-2}{2}}$-connected.}
\]
\end{cor}
\begin{proof}
  As
\(
 \floor*{\frac{-s-2}{2}} \leq -2
\), there is nothing to show when $n=0$. By Proposition \ref{ular}, condition $B(n)$ of Proposition \ref{wCM} holds for every $n \geq s$. We may now apply Proposition \ref{wCM} and conclude.
\end{proof}

\begin{thm}[{\cite{rw-wahl-stab}}] \label{su-conn}
 Let $R$ be a ring and $s \in \zz_{\geq 1}$ such that \(\sr(R) \leq s\). Then for every $n \in \zz_{\geq 0}$, the semi-simplicial set $\SU_{\star}(R^{n})$ is
\[
\text{$\floor*{\frac{n-s-2}{2}}$-connected.}
\]
\end{thm}
\begin{proof}
 We have $\glr(R) \leq s$ by Proposition \ref{glr-leq-sr}. Therefore it suffices to verify the statements $C(n)$, $C'(n)$ of Corollary \ref{nihai} for every $n \geq s$. Fixing an arbitrary $n \geq s$, we have the following:
\begin{itemize}
 \item The face poset $\Face(\UuU_{\star}(R^{n}))$ is 
 \[
  \text{$(n-2-(s-1)) = (n-s-1)$-connected}
 \]
  by Remark \ref{match-vdk}, \cite[Convention 2.2]{vdk-GL-stab}, and \cite[Theorem 2.6, part (i) with $\delta = 0$]{vdk-GL-stab}. Hence $C(n)$ holds by Theorem \ref{regular-ok}.
 \item For every $q \in \zz_{\geq 0}$ and splittable $R$-linear map $f \colon R^{\oplus [q]} \to R^{n}$ between right $R$-modules, the face poset $\Face(\UuU_{\star}(R^{n})_{f})$ is
 \[
  \text{$(n-2-(s-1) - (q+1)) = (n-s-q-2)$-connected}
 \]
 by Remark \ref{match-vdk-link}, \cite[Convention 2.2]{vdk-GL-stab}, and \cite[Theorem 2.6, part (ii) with $\delta = 0$]{vdk-GL-stab}. Hence $C'(n)$ holds by Theorem \ref{regular-ok}.
\end{itemize}
\end{proof}

\begin{prop} \label{su-pbc-conn}
 Let $R$ be a ring and $s \in \zz_{\geq 1}$ such that $\glr(R) \leq s$. Then for every $n \in \zz_{\geq 1}$, the inclusion of the nonempty semi-simplicial sets 
 \[
 \PBC_{\star}(R^{n}) \emb \SU_{\star}(R^{n})
 \]
 is $(n-s-1)$-connected.
 \end{prop}
\begin{proof}
 This follows from Theorem \ref{glr-pbc} and Proposition \ref{inclusion-conn}.
\end{proof}

\begin{cor} \label{pbc-conn}
 Let $R$ be a ring and $s \in \zz_{\geq 1}$ such that \(\sr(R) \leq s\). Then for every $n \geq 0$, the semi-simplicial set $\PBC_{\star}(R^{n})$ is
\[
\text{$\floor*{\frac{n-s-2}{2}}$-connected.}
\]
\end{cor}
\begin{proof}
  If $n = 0$, then $\PBC_{\star}(R^{0}) = \empt$ is $(-2)$-connected and 
\[
 \floor*{\frac{0-s-2}{2}} \leq \floor*{\frac{-3}{2}} = -2 \, .
\]
 Fix $n \geq 1$ and note that $\glr(R) \leq s$ by Proposition \ref{glr-leq-sr}. The larger semi-simplicial set $\SU_{\star}(R^{n})$ is $\floor*{\frac{n-s-2}{2}}$-connected by Theorem \ref{su-conn}. Now Proposition \ref{su-pbc-conn}, the implication
 \[
 0 \leq j \leq \floor*{\frac{n-s-2}{2}} \quad \imp \quad
 0 \leq j < n-s-1 \, ,
 \]
and Proposition \ref{conn-transfer} yield the result.
\end{proof}

\begin{prop} \label{pbc-unit-conn}
 Let $R$ be a commutative ring with a subgroup $\fU \leq R^{\times}$. Then for every $n \in \zz_{\geq 1}$, the inclusion of the nonempty semi-simplicial sets
 \[
  \PBC^{\fU}_{\star}(R^{n}) \emb \PBC_{\star}(R^{n})
 \]
 is $(n-2)$-connected.
\end{prop}
\begin{proof}
 This follows from Remark \ref{all-but-top} and Proposition \ref{inclusion-conn}.
\end{proof}

\begin{cor} \label{PBC-fU-conn}
 Let $R$ be a commutative ring with a subgroup $\fU \leq R^{\times}$, and $s \in \zz_{\geq 1}$ such that \(\sr(R) \leq s\). Then for every $n \geq 0$, the semi-simplicial set $\PBC^{\fU}_{\star}(R^{n})$ is
\[
\text{$\floor*{\frac{n-s-2}{2}}$-connected.}
\]
\end{cor}
\begin{proof}
 If $n = 0$, then $\PBC^{\fU}_{\star}(R^{0}) = \empt$ is $(-2)$-connected and 
\[
 \floor*{\frac{0-s-2}{2}} \leq \floor*{\frac{-3}{2}} = -2 \, .
\]
 Fix $n \geq 1$. The larger semi-simplicial set $\PBC_{\star}(R^{n})$ is $\floor*{\frac{n-s-2}{2}}$-connected by Corollary \ref{pbc-conn}. Now Proposition \ref{pbc-unit-conn}, the implication
 \[
 0 \leq j \leq \floor*{\frac{n-s-2}{2}} \quad \imp \quad
 0 \leq j < n-2
 \]
and Proposition \ref{conn-transfer}  yield the result.
\end{proof}

\begin{thm} \label{PBC-Euc-conn}
 Let $R$ be a Euclidean domain. Then for every $n \geq 0$, the semi-simplicial set $\PBC_{\star}(R^{n}) = \SU_{\star}(R^{n})$ is
\[
\text{$\floor*{\frac{n-3}{2}}$-connected.}
\]
\end{thm}
\begin{proof}
 Being a Euclidean domain, $R$ is also a Dedekind domain, so $\glr(R) \leq 1$ by Theorem \ref{glr-dedekind}. We get $\PB_{\star}(R^{n}) = \UuU_{\star}(R^{n})$ and $\PBC_{\star}(R^{n}) = \SU_{\star}(R^{n})$ by Corollary \ref{glr-one}. Therefore it suffices to verify the statements $C(n)$, $C'(n)$ of Corollary \ref{nihai} with $s \coloneqq 1$ for every $n \geq 1$. Fixing an arbitrary $n \geq 1$, we have the following:
 \begin{itemize}
 \item The face poset $\Face(\UuU_{\star}(R^{n})) = \Face(\PB_{\star}(R^{n}))$ is 
 \(
  \text{$(n-2)$-connected}
 \)
  by Remark \ref{match-maazen} and \cite[Theorem 6.1(i)]{vdk-loo-symp}. Hence $C(n)$ holds by Theorem \ref{regular-ok}.
 \item For every $q \in \zz_{\geq 0}$ and splittable $R$-linear map $f \colon R^{\oplus [q]} \to R^{n}$ between right $R$-modules, the face poset $\Face(\UuU_{\star}(R^{n})_{f}) =\Face(\PB_{\star}(R^{n})_{f})$ is
 \[
  \text{$(n-(q+1) - 2) = (n-q-3)$-connected}
 \]
 by Remark \ref{match-maazen-link} and \cite[Theorem 6.1(ii)]{vdk-loo-symp}. Hence $C'(n)$ holds by Theorem \ref{regular-ok}.
\end{itemize}
\end{proof}

\begin{thm}[{\cite{vdk-loo-symp}}] \label{hu-conn}
 Let $R$ be a Euclidean domain and write $Y \coloneqq H_{R}$ as in Definition \ref{hu-defn}. Then for every $g \geq 0$, the semi-simplicial set $\HU_{\star}(Y^{g})$ is
 \[
\text{$\floor*{\frac{g-3}{2}}$-connected.}
\]
\end{thm}
\begin{proof}
 The face poset $\Face(\HU_{\star}(Y^{g}))$ is $\floor*{\frac{g-3}{2}}$-connected by Remark \ref{match-hu} and \cite[Theorem 4.1]{vdk-loo-symp}. Now invoke Theorem \ref{regular-ok}.
\end{proof}

\section{The $\GG \mapsto U\GG$ construction and its functoriality} \label{UG-ulan}

While the following is a special case of Quillen's bracket construction \cite[page 219]{grayson-higher-k}, we follow the treatment in \cite{rw-wahl-stab} more closely.

\begin{defn}[{\cite[Section 1.1]{rw-wahl-stab}}] \label{U-const}
 Let $(\GG, \oplus, 0)$ be a monoidal groupoid. The category $U\GG$ has the same objects with $\GG$ and given  $A,B \in \Obj(\GG)$, we set
 \[
  \Hom_{U \GG}(A,B) \coloneqq \bigslant{\kume*{(X,f) : f \in \Hom_{\GG}(X \oplus A, B)}}{\sim}
 \]
 where we declare $(X,f) \sim (X',f')$ with $f \colon X \oplus A \to B$ and $f' \colon X' \oplus A \to B$ if there exists $\alpha \colon X \to X'$ in $\GG$ such that $f' \circ (\alpha \oplus \id_{A}) = f$ (this is an equivalence relation as $\GG$ is a groupoid); with composition  
\begin{align*}
  \Hom_{U \GG}(A,B) \times \Hom_{U \GG}(B,C) &\to \Hom_{U\GG}(A,C)
  \\
  \big([X,f],[Y,g]\big) &\mapsto [Y \oplus X,\, g \circ (\id_{Y} \oplus f)] \, .
\end{align*}
and identity morphisms $[0,\id_{A}] \in \End_{U\GG}(A)$. 
\end{defn} 

\begin{defn}\label{defn-ker}
 Let $F \colon \GG \to \GG^{\wr}$ be a functor between groupoids $\GG,\GG^{\wr}$. We write $\ker F$ for the subcategory of $\GG$ given by
\begin{itemize}
 \item $\Obj(\ker F) = \Obj(\GG)$,
 \item $\Hom_{\ker F}(X,Y) = 
\begin{cases}
 \empt & \text{if $X \neq Y$,}
 \\
 \{\vphi \in \Hom_{\GG}(X,X) : F(\vphi) = \id_{F(X)}\} & \text{if $X= Y$.}
\end{cases}
$
\end{itemize}
\end{defn}

\begin{rem}
 In the setting of Definition \ref{defn-ker}, $\ker F$ is a subgroupoid of $\GG$ as it contains the identity morphisms and is closed under composition and taking inverses. Moreover if $F$ is a monoidal functor between monoidal groupoids, $\ker F$ inherits a monoidal structure from $\GG$.
\end{rem}

\begin{prop} \label{conj-action-gpd}
 Let $(\GG, \oplus, 0)$ and $(\GG^{\wr}, \oplus^{\wr}, 0^{\wr})$ be monoidal groupoids and 
\[
 (F,J,\psi) \colon \GG \to \GG^{\wr}
\] 
  be a monoidal functor as in Definition \ref{monoidal-F}. Writing $\K \coloneqq \ker F$, 
%  (the subcategory of $\GG$ which has the same objects of $\GG$ and exactly the morphisms in $\GG$ that map to an identity morphism in $\GG^{\wr}$ under $F$) 
 the assignments
\begin{align*}
 \Hom_{U\GG}(A,B) 
 &\to \Hom_{\Grp}(\Aut_{\K}(A),\,\Aut_{\K}(B))
 \\
 [X,f] &\mapsto f \circ \pars*{\id_{X} \oplus -} \circ f^{-1}
\end{align*}
for every $A,B \in \Obj(\GG)$ define a functor
\begin{align*}
  \Aut_{\K} \colon U\GG &\to \Grp
  \\
  A &\mapsto \Aut_{\K}(A) \, .
\end{align*}
\end{prop}

\begin{rem}
 Since the proofs of the statements in this appendix took about 25 pages in total, I have put them in a separate place: section \ref{UG-proofs}. These proofs form the least readable part of this paper and can be skipped in first (or all) readings. Hopefully the statements themselves have become easier to navigate this way.
\end{rem}

\begin{prop} \label{A-to-U}
 Let $(\GG, \oplus, 0)$ be a monoidal groupoid and $c,x \in \Obj(\GG)$ so that we have an associated $\atsi$-group $\GG(c,x)$ from Definition \ref{monoidal-to-A-grp} and hence the category $\atsi_{\GG(c,x)}$ from Definition \ref{grothendieck-const}. The assignments
\begin{align*}
 \Hom_{\atsi_{\GG(c,x)}}(m,n) &\to \Hom_{U\GG^{\circ}}\pars*{c \oplus x^{\oplus m},c \oplus x^{\oplus n}}  
 \\
 \pars*{f,m,n} &\mapsto 
 \left[x^{\oplus n-m}, f \right]
\end{align*}
for every pair of integers $0 \leq m \leq n$ and $f \in \GG(c,x)_{n} = \Aut_{\GG}\pars*{c \oplus x^{\oplus n}}$ constitute a functor 
\[T_{\GG,c,x} \colon \atsi_{\GG(c,x)} \to U\GG^{\circ} \, . \]
Here $\GG^{\circ}$ is the monoidal opposite of $\GG$ as in Definition \ref{defn:mon-opp}.
\end{prop}

\begin{prop} \label{UF-oldu}
 Let $(\GG, \oplus, 0)$ and $(\GG^{\wr}, \oplus^{\wr}, 0^{\wr})$ be monoidal groupoids and 
\[
 (F,J,\psi) \colon \GG \to \GG^{\wr}
\] 
  be a monoidal functor as in Definition \ref{monoidal-F}. Then the assignments
\begin{align*}
 U_{F,J,\psi} \colon \Hom_{U\GG}(A,B) &\to \Hom_{U\GG^{\wr}}(F(A),F(B))
 \\
 [X,f] &\mapsto \left[F(X),\, F(f) \circ J_{X,A}\right]
\end{align*}
patch to a functor $U_{F,J,\psi} \colon U\GG \to U\GG^{\wr}$. 

Furthermore, writing $\cat$ for the category of (small) categories and $\mongpd$ for the category of monoidal groupoids and monoidal functors, the above assignments for various $\GG, \GG^{\wr}$  patch to a functor
\(
 U \colon \mongpd \to \cat
\).
\end{prop}

\begin{lem} \label{cat-epi}
Let $\ce_{1}, \ce_{2}, \ce_{3}$ be categories and $F \colon \ce_{1} \to \ce_{2}$ a functor which is essentially surjective and full. If $H, \wh{H} \colon \ce_{2} \to \ce_{3}$ are two functors such that $H \circ F \cong \wh{H} \circ F$, then $H \cong \wh{H}$.
\end{lem}

\begin{thm} \label{enhance}
We have the following monoidal enhancements:
\begin{birki}
 \item For every braided monoidal groupoid $(\GG,\oplus,0, \tau)$, the operation defined as
 \[
  [X,f] \oplus [Y,g] \coloneqq 
  \left[X \oplus Y, (f \oplus g) \circ (\id_{X} \oplus \tau_{A,Y}^{-1} \oplus \id_{C})\right]
 \]
 for every $[X,f] \in \Hom_{U\GG}(A,B)$ and $[Y,g] \in \Hom_{U\GG}(C,D)$ is monoidal on $U\GG$.
 \item Given a braided monoidal functor $(F,J,\psi) \colon (\GG,\oplus,0,\tau) \to (\GG^{\wr},\oplus^{\wr},0^{\wr},\tau^{\wr})$ between braided monoidal groupoids as in Definition \ref{bm-functor}, the triple 
 \[ (U_{F,J,\psi}, [0^{\wr},J], [0^{\wr},\psi]) \colon (U\GG, \oplus, 0) \to (U\GG^{\wr}, \oplus^{\wr}, 0^{\wr})\]
 is a monoidal functor as in Definition \ref{monoidal-F} where 
\begin{itemize}
 \item the functor $U_{F,J,\psi}$ is as in Proposition \ref{UF-oldu},
 \item $[0^{\wr},J]$ stands for the collection of isomorphisms 
 \[ \{[0^{\wr}, J_{A,B}] \colon F(A) \oplus^{\wr} F(B) \to F(A \oplus B)\}
 \]
 in $U\GG^{\wr}$,
 \item $[0^{\wr},\psi] \colon F(0) \to 0^{\wr}$ in $U\GG^{\wr}$.
\end{itemize}
\item 
 Via the assignments in (1) and (2), writing $\moncat$ for the category of monoidal categories and $\bmgpd$ for the category of braided monoidal groupoids and braided monoidal functors , the functor $U$ lifts to complete the square
 \[
  \xymatrixcolsep{2.5cm}
  \xymatrix{
  \bmgpd \ar@{-->}[r]^-{U} \ar[d] & \moncat \ar[d]
  \\
  \mongpd \ar[r]^-{U} & \cat
  }
 \]
where the vertical functors are forgetful.
\end{birki}
\end{thm}

\begin{prop} \label{conj-action-Hk}
 Let $(\GG, \oplus, 0)$ and $(\GG^{\wr}, \oplus^{\wr}, 0^{\wr})$ be monoidal groupoids and 
\[
 \mathbf{F} = (F,J,\psi) \colon \GG \to \GG^{\wr}
\] 
  be a monoidal functor so we have $U_{\mathbf{F}} \colon U\GG \to U\GG^{\wr}$ from Proposition \ref{UF-oldu}. Write $\K \coloneqq \ker F$, and let $\Aut_{\K} \colon U\GG \to \Grp$ be as in Proposition \ref{conj-action-gpd}.
%  (the subcategory of $\GG$ which has the same objects of $\GG$ and exactly the morphisms in $\GG$ that map to an identity morphism in $\GG^{\wr}$ under $F$) 
If $F$ is essentially surjective and full, then for every $k \in \zz_{\geq 0}$, writing $\co_{k} \colon \Grp \to \lMod{\zz}$ for the $k$-th group homology functor, there is a unique functor
\(
 \mathbf{H}_{k} 
\)
which makes the diagram
\[
 \xymatrixcolsep{2.1cm}
 \xymatrixrowsep{1.4cm} 
 \xymatrix{
 U\GG \ar[r]^-{U_{\mathbf{F}}} \ar[d]_{\Aut_{\K}} &  U{\GG^{\wr}} 
 \ar@{-->}[d]^{\exists !\,\mathbf{H}_{k}}
 \\
 \Grp \ar[r]_{\co_{k}} & \lMod{\zz} \, ,
 }
\] 
commute up to natural isomorphism; in other words, the $U\GG$-module $\co_{k}(\Aut_{\K})$  descends to a $U\GG^{\wr}$-module in a unique way up to natural isomorphism.
\end{prop}

\begin{thm} \label{essek}
Let $(\GG,\oplus,0,\tau)$ and $(\GG^{\wr},\oplus^{\wr},0^{\wr},\tau^{\wr})$ be braided monoidal groupoids, and $\mathbf{F} = (F,J,\psi) \colon \GG \to \GG^{\wr}$ be a braided monoidal functor  as in Definition \ref{bm-functor} and write $\K \coloneqq \ker F$. Given
\begin{itemize}
 \item $c,x \in \Obj(\GG)$,
 \item $c^{\wr}, x^{\wr} \in \Obj(\GG)$,
 \item $\gamma \colon F(c) \to c^{\wr}$ and $\xi \colon F(x) \to x^{\wr}$ in $\GG^{\wr}$,
\end{itemize}
consider the map $\mathbf{F}[\gamma,\xi] \colon \GG(c,x) \to \GG^{\wr}(c^{\wr},x^{\wr})$ of $\atsi$-groups from Remark \ref{Gcx-maps} and set 
\[
 \K(c,x) \coloneqq \ker \mathbf{F}[\gamma,\xi] \, .
\]
Consider the following functors/constructions:
\begin{itemize}
 \item The $\atsi$-group $\K(c,x)$ extends to an $\atsi_{\GG(c,x)}$-group via Proposition \ref{conj-action}.
 \vspace{0.1cm}
 \item Proposition \ref{A-to-U} yields a functor \(T_{\GG,c,x} \colon \atsi_{\GG(c,x)} \to U\GG^{\circ}\).
 \vspace{0.1cm}
 \item The braided monoidal functor $\pmb{\tau} \coloneqq (\id_{\GG},\tau,\id_{0}) \colon \GG^{\circ} \to \GG$ in Proposition \ref{exer} induces a (monoidal) functor $U_{\pmb{\tau}} \colon U\GG^{\circ} \to U\GG$ via Proposition \ref{UF-oldu}.
 \vspace{0.1cm}
 \item Writing $\K \coloneqq \ker F$, we have $\Aut_{\K} \colon U\GG \to \Grp$ from Proposition \ref{conj-action-gpd}.
\end{itemize}
 The following hold: 
\begin{birki}
 \item The diagram
\[
 \xymatrixcolsep{2.1cm}
 \xymatrixrowsep{1.4cm} 
 \xymatrix{
 \atsi_{\GG(c,x)} \ar[r]^-{T_{\GG,c,x}} \ar[drr]_-{\K(c,x)} & 
 U\GG^{\circ} 
 \ar[r]^-{U_{\pmb{\tau}}} & U\GG \ar[d]^-{\Aut_{\K}}
 \\
 & & \Grp 
 }
\] 
 with these four functors commutes.
\vspace{0.1cm}
\item If furthermore the functor $F \colon \GG \to \GG^{\wr}$ is essentially surjective and full, then for every $k \in \zz_{\geq 0}$, writing $\co_{k} \colon \Grp \to \lMod{\zz}$ for the $k$-th group homology functor, there exist unique functors
\begin{align*}
 \co_{k}\pars*{\K(c,x)} &\colon \atsi_{\GG^{\wr}(c^{\wr},x^{\wr})} \to \lMod{\zz}
 \\
 \mathbf{H}_{k} &\colon U\GG^{\wr} \to \lMod{\zz}
\end{align*} that make the diagram
\[
 \xymatrixcolsep{2.1cm}
 \xymatrixrowsep{1.4cm} 
 \xymatrix{
 \atsi_{\GG(c,x)} \ar[r]^-{T_{\GG,c,x}} \ar[drr]_-{\K(c,x)} 
 \ar[ddr]_{\atsi_{\mathbf{F}[\gamma,\xi]}} & 
 U\GG^{\circ} \ar[ddr]_-{U_{\mathbf{F}^{\circ}}}
 \ar[r]^-{U_{\pmb{\tau}}} & U\GG \ar[d]_-{\Aut_{\K}} \ar[ddr]^{U_{\mathbf{F}}}
 \\
 & & \Grp \ar[ddr]^<<<<<<<<<<<<{\co_{k}}
 \\
 & \atsi_{\GG^{\wr}(c^{\wr},x^{\wr})} \ar[r]^-{T_{\GG^{\wr},c^{\wr},x^{\wr}}} 
 \ar[drr]_-{\co_{k}\pars*{\K(c,x)}\,\,\,\,\,\,} & 
 U{\GG^{\wr}}^{\circ} 
 \ar[r]^>>>>>>>{U_{\pmb{\tau}^{\wr}}} & U\GG^{\wr} \ar[d]^-{\mathbf{H}_{k}}
 \\
 & & & \lMod{\zz}  
 } \tag{$\triangle$} \label{prism}
\] 
commute up to natural isomorphisms.
\end{birki}
\end{thm}

\subsection{Proofs} \label{UG-proofs}
\begin{proof}[Proof of \textbf{\emph{Proposition \ref{conj-action-gpd}}}]
 First let us observe that for every morphism $f \colon X \oplus A \to B$ in $\GG$, there is a well-defined map
\begin{align*}
 f \circ \pars*{\id_{X} \oplus -} \circ f^{-1} \colon \Aut_{\K}(A) &\to \Aut_{\K}(B)
 \\
 \kappa &\mapsto f \circ \pars*{\id_{X} \oplus \kappa} \circ f^{-1} \, ,
\end{align*}
that it is a group homomorphism is then straightforward. Indeed, if $\kappa \colon A \to A$ in $\GG$ satisfies $F(\kappa) = \id_{F(A)}$, then
\begin{align*}
 F\!\left(
 f \circ (\id_{X} \oplus \kappa) \circ f^{-1}
 \right)
 &= 
 F(f) \circ F(\id_{X} \oplus \kappa) \circ F(f^{-1}) \, .
\end{align*}
Here due to the natural isomorphism $J$, we have a commutative diagram
\[ 
 \xymatrixrowsep{1.5cm}
 \xymatrixcolsep{2.5cm}
 \xymatrix{
 F(X) \oplus^{\wr} F(A) \ar[d]_-{J_{X,A}} \ar[r]^-{F(\id_{X}) \oplus^{\wr} F(\kappa)}
 &
 F(X) \oplus^{\wr} F(A) \ar[d]^-{J_{X,A}}
 \\
 F(X \oplus A) \ar[r]_-{F\pars*{\id_{X} \oplus \kappa}} & F(X \oplus A) \, ,
 }
\]
so that
\begin{align*}
 F\pars*{\id_{X} \oplus \kappa} &= 
 J_{X,A} \circ \pars*{F(\id_{X}) \oplus^{\wr} F(\kappa)} 
 \circ J_{X,A}^{-1}
 \\
 &=
 J_{X,A} \circ \pars*{\id_{F(X)} \oplus^{\wr} \id_{F(A)}} 
 \circ J_{X,A}^{-1}
 \\
 &=
 J_{X,A} \circ \id_{F(X) \oplus^{\wr} F(A)} 
 \circ J_{X,A}^{-1}
 \\
 &= J_{X,A} \circ J_{X,A}^{-1} = \id_{F(X \oplus A)}
\end{align*}
and hence
\begin{align*}
 F\!\left(
 f \circ (\id_{X} \oplus \kappa) \circ f^{-1}
 \right) 
 &=
 F(f) \circ F(\id_{X} \oplus \kappa) \circ F(f^{-1})
 \\
 &=
 F(f) \circ \id_{F(X \oplus A)}\circ F(f)^{-1} 
 \\
 &= F(f) \circ F(f)^{-1} = \id_{F(B)}
\end{align*}
as desired.

Second, we check that the assignment 
\[
 [X,f] \mapsto f \circ \pars*{\id_{X} \oplus -} \circ f^{-1}
\]
itself is well-defined. Suppose $[X,f] = [\wt{X},\wt{f}]$ in $U\GG$, so there is an isomorphism $\alpha \colon X \to \wt{X}$ in $\GG$ such that
\[
 \wt{f} \circ (\alpha \oplus \id_{A}) = f \, ;
\]
hence
\begin{align*}
  f \circ (\id_{X} \oplus -) \circ f^{-1} 
  &= \left( \wt{f} \circ (\alpha \oplus \id_{A}) \right)
  \circ (\id_{X} \oplus -) \circ
  \left( \wt{f} \circ (\alpha \oplus \id_{A}) \right)^{-1}
  \\
  &= \wt{f} \circ (\alpha \oplus \id_{A})
  \circ (\id_{X} \oplus -) \circ
  (\alpha \oplus \id_{A})^{-1} \circ \wt{f}^{-1}
  \\
  &= \wt{f} \circ (\alpha \oplus-)
   \circ
  (\alpha^{-1} \oplus \id_{A}) \circ \wt{f}^{-1} 
  \\
  &= \wt{f} \oplus (\id_{\wt{X}} \oplus -) \circ \wt{f}^{-1} \, .
\end{align*}

Third, we check the functoriality of $\Aut_{\K}$. Preservation of identities follows from
\[
 \Aut_{\K}[0,\id_{A}] = \id_{A} \circ \pars*{\id_{0} \oplus -} \circ \pars*{\id_{A}}^{-1} = \id_{\Aut_{\K}(A)} \, .
\]
To check preservation of composition, given
\[
 A \xrightarrow{[X,f]} B \xrightarrow{[Y,g]} C
\]
in $U\GG$, for every $\kappa \colon A \to A$ in $\GG$ with $F(\kappa) = \id_{F(A)}$, we observe
\begin{align*}
 \pars*{\Aut_{\K}[Y,g] \circ \Aut_{\K}[X,f]}(\kappa) 
 &=
 \Aut_{\K}[Y,g]\pars*{f \circ \pars*{\id_{X} \oplus \kappa} \circ f^{-1}}
 \\
 &=
 g \circ \pars*{\id_{Y} \oplus 
 \pars*{f \circ \pars*{\id_{X} \oplus \kappa} \circ f^{-1}}} \circ g^{-1}
 \\
 &=
 g \circ 
 \pars*{\id_{Y} \oplus f} \circ \pars*{\id_{Y \oplus X} \oplus \kappa} \circ 
 \pars*{\id_{Y} \oplus f^{-1}} \circ g^{-1}
 \\
 &=
 \pars*{g \circ \pars*{\id_{Y} \oplus f}}
 \circ 
 \pars*{\id_{Y \oplus X} \oplus \kappa} \circ 
 \pars*{g \circ \pars*{\id_{Y} \oplus f}}^{-1}
 \\
 &=
 \Aut_{\K}[Y \oplus X,\, g \circ (\id_{Y} \oplus f)](\kappa)
 \\ 
 &=
 \Aut_{\K}\!\big([Y,g] \circ [X,f]\big)(\kappa) \, ,
\end{align*}
as desired.
\end{proof}

\begin{proof}[Proof of \textbf{\emph{Proposition \ref{A-to-U}}}]
To check the preservation of composition, let
 \[
  m \xrightarrow{\pars*{f,m,n}} n \xrightarrow{\pars*{g,n,p}} p
 \]
 be morphisms in $\atsi_{\GG(c,x)}$, so we want to establish
\begin{align*}
 T_{\GG,c,x}\pars*{g,n,p} \circ T_{\GG,c,x}\pars*{f,m,n} &= 
 T_{\GG,c,x}\pars*{\pars*{g,n,p} \circ \pars*{f,m,n}} \, ,
 \\
 \left[x^{\oplus p-n}, g \right] \circ 
 \left[x^{\oplus n-m}, f \right] &= 
 T_{\GG,c,x}{\pars*{g \,\circ\, \GG(c,x)_{(n,p)}(f),\,m,p}} \, .
\end{align*}
in $U\GG^{\circ}$. By construction we have $\GG(c,x)_{(n,p)}(f) = f \oplus \id_{x^{\oplus p-n}}$, so what we want to establish is
\begin{align*}
 \left[
 x^{\oplus m-p}, g \circ \pars*{f \oplus \id_{x^{\oplus p-n}}}
 \right] &=
 \left[x^{\oplus p-n}, g \right] \circ 
 \left[x^{\oplus n-m}, f \right]
 \\
 &=
 \left[
 x^{\oplus p-n} \oplus^{\circ} x^{\oplus n-m},\,
 g \circ \pars*{\id_{x^{\oplus p-n}} \oplus^{\circ} f}
 \right] 
 \\
 &=
 \left[
 x^{\oplus n-m} \oplus x^{\oplus p-n},\,
 g \circ \pars*{f \oplus \id_{x^{\oplus p-n}}}
 \right] \, ,
\end{align*}
which is evidently true.
\end{proof}

\begin{proof}[Proof of \textbf{\emph{Proposition \ref{UF-oldu}}}]
 To see that $U_{F,J,\psi}$ is well-defined, suppose $(X,f) \sim_{\GG} (X',f')$ so that there is $\alpha \colon X \to X'$ in $\GG$ with $f' \circ (\alpha \oplus \id_{A}) = f$. Applying the functor $F$ we have
 \[
  F(f') \circ F(\alpha \oplus \id_{A}) = F(f)
 \]
 as a morphism $F(X \oplus A) \to F(B)$ in $\GG^{\wr}$. Precomposing with $J_{X,A}$ we get
\begin{align*}
 F(f) \circ J_{X,A} &= F(f') \circ F(\alpha \oplus \id_{A}) \circ J_{X,A}
 \\
 &= F(f') \circ J_{X',A} \circ \pars*{F(\alpha) \oplus^{\wr} F(\id_{A})}
 \\
 &= \pars*{F(f') \circ J_{X',A}} \circ 
 \pars*{F(\alpha) \oplus^{\wr} \id_{F(A)}} \, ,
\end{align*}
by the naturality of $J$, establishing
\[
 \pars*{F(X), \,F(f) \circ J_{X,A}} \sim_{\GG^{\wr}} 
 \pars*{F(X'),\, F(f') \circ J_{X',A}}
\]
as desired.

To check preservation of composition, suppose $[X,f] \colon A \to B$ and $[Y,g] \colon B \to C$ are morphisms in $U\GG$ so we want to show
\begin{align*}
 U_{F,J,\psi}\pars*{[Y,g] \circ [X,f]} &=
 U_{F,J,\psi}[Y,g] \,\circ \, U_{F,J,\psi}[X,f]
 \\
 U_{F,J,\psi}[Y \oplus X,\, g \circ (\id_{Y} \oplus f)] &=
 \left[F(Y),\, F(g) \circ J_{Y,B}\right] \,\circ \, \left[F(X),\, F(f) \circ J_{X,A}\right]
\end{align*}
as morphisms $F(A) \to F(C)$ in $U\GG^{\wr}$. The left hand side in the last equation above is
\[
 \left[
 F(Y \oplus X),\, F\pars*{g \circ (\id_{Y} \oplus f)} \circ J_{Y\oplus X,A}
 \right] \, ,
\]
whereas the right hand side is
\[
 \left[
 F(Y) \oplus^{\wr} F(X),\,
 \pars*{F(g) \circ J_{Y,B}} \circ 
 \pars*{\id_{F(Y)} \oplus^{\wr} \pars*{F(f) \circ J_{X,A}}}
 \right] \, .
\]
The obvious candidate to realize the equality of both sides is $J_{Y,X}$. 
To that end, the monoidality of $(F,J)$ yields
\begin{align*}
 J_{Y \oplus X, A} \circ \pars*{J_{Y,X} \oplus^{\wr} \id_{F(A)}} 
% &= J_{U \oplus V, W} \circ \pars*{J_{U,V} \oplus^{\wr} \id_{F(W)}} 
% \\
% &= J_{U,V \oplus W} \circ \pars*{\id_{F(U)} \oplus^{\wr} J_{V,W}}
% \\
 &= J_{Y,X \oplus A} \circ \pars*{\id_{F(Y)} \oplus^{\wr} J_{X,A}} 
\end{align*}
and the naturality of $J$ yields
\begin{align*}
 F(\id_{Y} \oplus f) \circ J_{Y,X \oplus A} 
 &= J_{Y,B} \circ \pars*{F(\id_{Y}) \oplus^{\wr} F(f)}
 \\
 &= J_{Y,B} \circ \pars*{\id_{F(Y)} \oplus^{\wr} F(f)} 
\end{align*}
so that
\begin{align*}
 F(\id_{Y} \oplus f) \circ J_{Y \oplus X, A} \circ \pars*{J_{Y,X} \oplus^{\wr} \id_{F(A)}}
 &=
 J_{Y,B} \circ \pars*{\id_{F(Y)} \oplus^{\wr} F(f)} 
 \circ \pars*{\id_{F(Y)} \oplus^{\wr} J_{X,A}} 
 \\
 &=
 J_{Y,B} \circ \pars*{\id_{F(Y)} \oplus^{\wr} \pars*{F(f) \circ J_{X,A}}} 
\end{align*}
as desired. To check preservation of identities, note that
\[ 
 U_{F,J,\psi}[0,\id_{A}] = [F(0), F(\id_{A}) \circ J_{0,A}] = [F(0), \id_{F(A)} \circ J_{0,A}] 
 = [F(0), J_{0,A}] = [0^{\wr}, \id_{F(A)}]
\]
where the last equality follows from the property of the isomorphism $\psi \colon F(0) \to 0^{\wr}$ in $\GG^{\wr}$ that \( \id_{F(A)} \circ \pars*{\psi \oplus^{\wr} \id_{F(A)}} = \psi \oplus^{\wr} \id_{F(A)} = J_{0,A}\).

Moving on to the functoriality of \( U \colon \mongpd \to \cat\), let us first note that given 
\[
 (\GG,\oplus,0) \xrightarrow{(F,J,\psi)} (\GG^{\wr},\oplus^{\wr},0^{\wr}) \xrightarrow{(H,L,\eta)} (\GG^{\square},\oplus^{\square},0^{\square}) \, ,
\]
in $\mongpd$, we compose them as
\[
 (H,L,\eta) \circ (F,J,\psi) = (H\circ F,W,\theta)
\]
where $W_{X,Y}$ is the composite
\[
 H(F(X)) \oplus^{\square} H(F(Y)) \xrightarrow{L_{F(X),F(Y)}} 
 H\pars*{F(X) \oplus^{\wr} F(Y)} \xrightarrow{H\pars*{J_{X,Y}}} H(F(X \oplus Y))
\]
for every $X,Y \in \Obj(\GG)$, and $\theta$ is the composite $H(F(0)) \xrightarrow{H(\psi)} H(0^{\wr}) \xrightarrow{\eta} 0^{\square}$. To check preservation of composition, we hence need to verify
\[
 U_{H,L,\eta} \circ U_{F,J,\psi}  = U_{H \circ F, W, \theta} \, .
\]
Indeed, for every morphism $[X,f] \colon A \to B$ in $U\GG$ we have
\begin{align*}
 \pars*{U_{H,L,\eta} \circ U_{F,J,\psi}}[X,f]
 &= U_{H,L,\eta}\left[
 F(X),\, F(f) \circ J_{X,A}
 \right]
 \\
 &= \left[
 H(F(X)),\, H\pars*{F(f) \circ J_{X,A}} \circ L_{F(X),F(A)}
 \right]
 \\
 &= \left[
 H(F(X)),\, H\pars*{F(f)} \circ H\pars*{J_{X,A}} \circ L_{F(X),F(A)}
 \right] 
 \\
 &= \left[
 (H \circ F)(X),\, (H\circ F)(f) \circ W_{X,A}
 \right]  
 \\
 &= U_{H \circ F,W,\theta}[X,f] \, .
\end{align*}
\end{proof}
 
\begin{proof}[Proof of \textbf{\emph{Lemma \ref{cat-epi}}}]
 A relatively routine diagram chase which we omit.
\end{proof} 
 
\begin{proof}[Proof of \textbf{\emph{Theorem \ref{enhance}}}]
 (1): This is in \cite[Proposition 1.8 and its proof]{rw-wahl-stab}. 
 
 (2): Let us first observe the naturality of the collection
 \[ \left\{
 \left[0^{\wr}, J_{A,B}\right] \!\colon F(A) \oplus^{\wr} F(B) \to F(A \oplus B)
 \right\}
 \]
 in $A,B$. Suppose $[X,f] \colon A \to \wh{A}$ and $[Y,g] \colon B \to \wh{B}$ are morphisms in $U\GG$, so we want to show that
 \[
  \xymatrixrowsep{1.5cm}
  \xymatrixcolsep{4.5cm}  
  \xymatrix{
  F(A) \oplus^{\wr} F(B) \ar[d]_-{\left[0^{\wr}, J_{A,B}\right]}
  \ar[r]^-{U_{F,J,\psi}[X,f] \,\,\oplus^{\wr}\,\, U_{F,J,\psi}[Y,g]} &
  F(\wh{A}) \oplus^{\wr} F(\wh{B}) 
  \ar[d]^-{\left[0^{\wr}, J_{\wh{A},\wh{B}}\right]}
  \\
  F(A \oplus B)
  \ar[r]^-{U_{F,J,\psi}\pars*{[X,f] \,\oplus\, [Y,g]}} &
  F(\wh{A} \oplus \wh{B})  
  }
 \]
commutes in $U\GG^{\wr}$. Note that post-composing the morphism
\begin{align*}
 &\,\,\,\,\,U_{F,J,\psi}[X,f] \,\oplus^{\wr}\, U_{F,J,\psi}[Y,g] 
 \\
 &=
 \left[F(X),\,F(f) \circ J_{X,A}\right] \,\oplus^{\wr}\,
 \left[F(Y),\,F(g) \circ J_{Y,B}\right]
 \\
 &=
 \left[
 F(X) \oplus^{\wr} F(Y),
 \pars*{\pars*{F(f) \circ J_{X,A}} \oplus^{\wr} 
 \pars*{F(g) \circ J_{Y,B}}
 }
 \!\circ\!
 \pars*{
 \id_{F(X)} \oplus^{\wr} \pars*{\tau^{\wr}_{F(A),F(Y)}}^{-1} \!\!\oplus^{\wr} \id_{F(B)}
 }
 \right]
 \\
 &=
 \left[
 F(X) \oplus^{\wr} F(Y),
 \pars*{F(f) \oplus^{\wr} F(g)} \!\circ\!
 \pars*{J_{X,A} \oplus^{\wr} J_{Y,B}} \!\circ\!
 \pars*{
 \id_{F(X)} \oplus^{\wr} \pars*{\tau^{\wr}_{F(A),F(Y)}}^{-1} \!\!\oplus^{\wr} \id_{F(B)}
 }
 \right]
\end{align*}
with $\left[0^{\wr}, J_{\wh{A},\wh{B}}\right]$ results in $\left[F(X) \oplus^{\wr} F(Y),\,\Xi \right]$, where
\begin{align*}
 \Xi 
 &= 
 J_{\wh{A},\wh{B}} \circ
 \pars*{F(f) \oplus^{\wr} F(g)} \circ
 \pars*{J_{X,A} \oplus^{\wr} J_{Y,B}} \circ
 \xi
 \\
 &= 
 F(f \oplus g) \circ J_{X \oplus A, Y \oplus B} \circ
 \pars*{J_{X,A} \oplus^{\wr} J_{Y,B}} \circ
 \xi
\end{align*}
and
\[
 \xi =
 \id_{F(X)} \oplus^{\wr} \pars*{\tau^{\wr}_{F(A),F(Y)}}^{-1} \!\!\oplus^{\wr} \id_{F(B)}
\]
in $\GG^{\wr}$. On the other hand, pre-composing the morphism
\begin{align*}
 U_{F,J,\psi}\pars*{[X,f] \,\oplus\, [Y,g]}
 &=
 U_{F,J,\psi}
 \left[
 X \oplus Y,\, (f \oplus g) \circ \pars*{\id_{X} \oplus \tau^{-1}_{A,Y} \oplus \id_{B}}
 \right] 
 \\
 &=
 \left[
 F(X \oplus Y),\, 
 F\pars*{(f \oplus g) \circ \pars*{\id_{X} \oplus \tau^{-1}_{A,Y} \oplus \id_{B}}
 } \circ J_{X\oplus Y,A \oplus B}
 \right] 
 \\
 &=
 \left[
 F(X \oplus Y),\, 
 F\pars*{f \oplus g} \circ F\pars*{\id_{X} \oplus \tau^{-1}_{A,Y} \oplus \id_{B}}
 \circ J_{X\oplus Y,A \oplus B}
 \right] 
\end{align*}
with $\left[0^{\wr}, J_{A,B}\right]$ results in $\left[ F(X \oplus Y),\, \Theta\right]$, where
\begin{align*}
 \Theta &= F\pars*{f \oplus g} \circ F\pars*{\id_{X} \oplus \tau^{-1}_{A,Y} \oplus \id_{B}}
 \circ J_{X\oplus Y,A \oplus B}
 \circ (\id_{F(X \oplus Y)} \oplus^{\wr} J_{A,B})
\end{align*}
in $\GG^{\wr}$. The obvious candidate to realize $\left[F(X) \oplus^{\wr} F(Y),\,\Xi \right] = \left[ F(X \oplus Y),\, \Theta\right]$ is $J_{X,Y}$, that is, it suffices to show that
\[
 \Theta \circ \pars*{J_{X,Y} \oplus^{\wr} \id_{F(A) \oplus^{\wr} F(B)}} = \Xi \, .
\]
To that end, first note that 
\begin{align*}
 (\id_{F(X \oplus Y)} \oplus^{\wr} J_{A,B})
 \circ \pars*{J_{X,Y} \oplus^{\wr} \id_{F(A) \oplus^{\wr} F(B)}}
 &= J_{X,Y} \oplus^{\wr} J_{A,B}
 \\
 &= \pars*{J_{X,Y} \oplus^{\wr} \id_{F(A \oplus B)}}
 \circ \pars*{\id_{F(X) \oplus^{\wr} F(Y)} \oplus^{\wr} J_{A,B}} \, .
\end{align*}
Writing $\mathbf{D}(X,Y,Z)$ for the commutative diagram in Definition \ref{monoidal-F}, we can invoke \[\mathbf{D}(X,Y, A \oplus B)\] to get
\begin{align*}
 &\,\,\,\,\,
 F\pars*{\id_{X} \oplus \tau^{-1}_{A,Y} \oplus \id_{B}}
 \circ J_{X\oplus Y,A \oplus B}
 \circ (\id_{F(X \oplus Y)} \oplus^{\wr} J_{A,B})
 \circ \pars*{J_{X,Y} \oplus^{\wr} \id_{F(A) \oplus^{\wr} F(B)}}
 \\
 &=
 F\pars*{\id_{X} \oplus \tau^{-1}_{A,Y} \oplus \id_{B}}
 \circ J_{X\oplus Y,A \oplus B}
 \circ \pars*{J_{X,Y} \oplus^{\wr} \id_{F(A \oplus B)}}
 \circ \pars*{\id_{F(X) \oplus^{\wr} F(Y)} \oplus^{\wr} J_{A,B}}
 \\
 &=
 F\pars*{\id_{X} \oplus \tau^{-1}_{A,Y} \oplus \id_{B}}
 \circ 
 J_{X,Y \oplus A \oplus B} 
 \circ
 \pars*{\id_{F(X)} \oplus^{\wr} J_{Y,A \oplus B}}
 \circ \pars*{\id_{F(X) \oplus^{\wr} F(Y)} \oplus^{\wr} J_{A,B}} \, .
\end{align*}
We can now use the naturality of $J$ to get
\begin{align*}
 &\,\,\,\,\,
 F\pars*{\id_{X} \oplus \tau^{-1}_{A,Y} \oplus \id_{B}}
 \circ 
 J_{X,Y \oplus A \oplus B} 
 \circ
 \pars*{\id_{F(X)} \oplus^{\wr} J_{Y,A \oplus B}}
 \circ \pars*{\id_{F(X) \oplus^{\wr} F(Y)} \oplus^{\wr} J_{A,B}}
 \\
 &=
 J_{X, A \oplus Y \oplus B}
 \circ
 \pars*{F\pars*{\id_{X}} \oplus^{\wr} F\pars*{\tau_{A,Y}^{-1} \oplus \id_{B}}}
 \circ
 \pars*{\id_{F(X)} \oplus^{\wr} J_{Y,A \oplus B}}
 \circ \pars*{\id_{F(X) \oplus^{\wr} F(Y)} \oplus^{\wr} J_{A,B}} 
 \\
 &=
 J_{X, A \oplus Y \oplus B}
 \circ
 \pars*{\id_{F(X)} \oplus^{\wr} F\pars*{\tau_{A,Y}^{-1} \oplus \id_{B}}}
 \circ
 \pars*{\id_{F(X)} \oplus^{\wr} J_{Y,A \oplus B}}
 \circ \pars*{\id_{F(X) \oplus^{\wr} F(Y)} \oplus^{\wr} J_{A,B}}  
 \\
 &=
 J_{X, A \oplus Y \oplus B}
 \circ
 \bigg(\id_{F(X)} \oplus^{\wr}
 \pars*{\,
 F\pars*{\tau_{A,Y}^{-1} \oplus \id_{B}}
 \circ
 J_{Y,A \oplus B}
 \circ \pars*{\id_{F(Y)} \oplus^{\wr} J_{A,B}}
 \,}\bigg) \, .
\end{align*}
Next, we invoke $\mathbf{D}(Y,A,B)$, the naturality of $J$ and the commutative diagram in Definition \ref{bm-functor} for $A,Y$ to get
\begin{align*}
 &\,\,\,\,\,F\pars*{\tau_{A,Y}^{-1} \oplus \id_{B}}
 \circ
 J_{Y,A \oplus B}
 \circ \pars*{\id_{F(Y)} \oplus^{\wr} J_{A,B}}
 \\
 &=
 F\pars*{\tau_{A,Y}^{-1} \oplus \id_{B}}
 \circ
 J_{Y \oplus A, B}
 \circ 
 \pars*{J_{Y,A} \oplus^{\wr} \id_{F(B)}}
 \\ 
 &=
 J_{A \oplus Y, B}
 \circ 
 \pars*{F\pars*{\tau^{-1}_{A,Y}} \!\oplus^{\wr}\! F\pars*{\id_{B}}}
 \circ
 \pars*{J_{Y,A} \oplus^{\wr} \id_{F(B)}}  
 \\ 
 &=
 J_{A \oplus Y, B}
 \circ 
 \pars*{\pars*{F\pars*{\tau^{-1}_{A,Y}} \circ J_{Y,A}} \oplus^{\wr} \id_{F(B)}}
 \\
 &=
 J_{A \oplus Y, B}
 \circ 
 \pars*{\pars*{J_{A,Y} \circ \pars*{\tau^{\wr}_{F(A),F(Y)}}^{-1}} \oplus^{\wr} \id_{F(B)}} \, .
\end{align*}
Thus via $\mathbf{D}(A,Y,B)$, we have
\begin{align*}
 &\,\,\,\,\,
 \id_{F(X)} \oplus^{\wr}
 \pars*{\,
 F\pars*{\tau_{A,Y}^{-1} \oplus \id_{B}}
 \circ
 J_{Y,A \oplus B}
 \circ \pars*{\id_{F(Y)} \oplus^{\wr} J_{A,B}}
 \,}
 \\
 &=
 \id_{F(X)} \oplus^{\wr}
 \pars*{\,
 J_{A \oplus Y, B}
 \circ 
 \pars*{\pars*{J_{A,Y} \circ \pars*{\tau^{\wr}_{F(A),F(Y)}}^{-1}} \oplus^{\wr} \id_{F(B)}} \,} 
 \\
 &=
 \bigg(\id_{F(X)} \oplus^{\wr}
 \pars*{
 J_{A \oplus Y, B}
 \circ 
 \pars*{
 J_{A,Y} \oplus^{\wr} \id_{F(B)}}
 }\bigg) \circ \xi
 \\
 &=
 \bigg(\id_{F(X)} \oplus^{\wr}
 \pars*{
 J_{A,Y \oplus B}
 \circ 
 \pars*{\id_{F(A)} \oplus^{\wr} J_{Y,B}}
 }\bigg) \circ \xi 
 \\
 &=
 \pars*{\id_{F(X)} \oplus^{\wr} J_{A,Y \oplus B}}
 \circ 
 \pars*{\id_{F(X) \oplus^{\wr} F(A)} \oplus^{\wr} J_{Y,B}}
 \circ \xi  
\end{align*}
and hence via $\mathbf{D}(X,A,Y\oplus B)$, we have
\begin{align*}
 &\,\,\,\,\,
 J_{X, A \oplus Y \oplus B}
 \circ
 \bigg(\id_{F(X)} \oplus^{\wr}
 \pars*{\,
 F\pars*{\tau_{A,Y}^{-1} \oplus \id_{B}}
 \circ
 J_{Y,A \oplus B}
 \circ \pars*{\id_{F(Y)} \oplus^{\wr} J_{A,B}}
 \,}\bigg)
 \\
 &=
 J_{X, A \oplus Y \oplus B}
 \circ
 \pars*{\id_{F(X)} \oplus^{\wr} J_{A,Y \oplus B}}
 \circ 
 \pars*{\id_{F(X) \oplus^{\wr} F(A)} \oplus^{\wr} J_{Y,B}}
 \circ \xi
 \\
 &=
 J_{X \oplus A, Y \oplus B}
 \circ
 \pars*{J_{X,A} \oplus^{\wr} \id_{Y \oplus B}}
 \circ 
 \pars*{\id_{F(X) \oplus^{\wr} F(A)} \oplus^{\wr} J_{Y,B}}
 \circ \xi
 \\
 &= J_{X \oplus A, Y \oplus B} \circ
 \pars*{J_{X,A} \oplus^{\wr} J_{Y,B}} \circ \xi \, .
\end{align*}
It follows that
\begin{align*}
  &\,\,\,\,\,
 F\pars*{\id_{X} \oplus \tau^{-1}_{A,Y} \oplus \id_{B}}
 \circ J_{X\oplus Y,A \oplus B}
 \circ (\id_{F(X \oplus Y)} \oplus^{\wr} J_{A,B})
 \circ \pars*{J_{X,Y} \oplus^{\wr} \id_{F(A) \oplus^{\wr} F(B)}}
 \\
 &= J_{X \oplus A, Y \oplus B} \circ
 \pars*{J_{X,A} \oplus^{\wr} J_{Y,B}} \circ \xi \, .
\end{align*}
Post-composing the above with $F(f \oplus g)$ yields
\[ \Theta \circ \pars*{J_{X,Y} \oplus^{\wr} \id_{F(A) \oplus^{\wr} F(B)}}
 = F(f \oplus g)
 \circ
 J_{X \oplus A, Y \oplus B} \circ
 \pars*{J_{X,A} \oplus^{\wr} J_{Y,B}} \circ \xi
 = \Xi 
\]
as desired.

Second, we shall show that the diagram
\[
 \xymatrixcolsep{3.6cm}
 \xymatrixrowsep{1.6cm} 
 \xymatrix{
 F(A) \oplus^{\wr} F(B) \oplus^{\wr} F(C) 
 \ar[r]^-{\left[0^{\wr},J_{A,B}\right] \,\oplus^{\wr}\, \left[0^{\wr},\id_{F(C)}\right]} 
 \ar[d]_-{\left[0^{\wr},\id_{F(A)}\right] \,\oplus^{\wr}\, \left[0^{\wr},J_{B,C}\right]} 
 &
 F(A \oplus B) \oplus^{\wr} F(C) 
 \ar[d]^-{\left[0^{\wr},\, J_{A \oplus B, C}\right]}  
 \\
 F(A) \oplus^{\wr} F(B \oplus C) 
 \ar[r]^-{\left[0^{\wr},\, J_{A,B \oplus C}\right]}  
 & F(A \oplus B \oplus C)
 }
 \]
in $U\GG^{\wr}$ commutes: first note that
\begin{align*}
 &\,\,\,\,\,\left[0^{\wr},J_{A,B}\right] \,\oplus^{\wr}\, \left[0^{\wr},\id_{F(C)}\right]
 \\
 &=
 \left[
  0^{\wr} \oplus^{\wr} 0^{\wr},\, \pars*{J_{A,B} \oplus^{\wr} \id_{F(C)}} \circ
  \pars*{\id_{0^{\wr}} \oplus^{\wr} \pars*{\tau^{\wr}_{F(A) \oplus^{\wr} F(B),\,0^{\wr}}}^{-1}
  \oplus^{\wr} \id_{F(C)}
  }
 \right]
 \\
 &=
 \left[
  0^{\wr},\, \pars*{J_{A,B} \oplus^{\wr} \id_{F(C)}} \circ
  \pars*{
  \id_{F(A) \oplus^{\wr} F(B)}
  \oplus^{\wr} \id_{F(C)}
  }
 \right] 
 \\
 &=
 \left[
  0^{\wr},\, J_{A,B} \oplus^{\wr} \id_{F(C)}
 \right]  
\end{align*}
and
\begin{align*}
 &\,\,\,\,\,\left[0^{\wr},\id_{F(A)}\right] \,\oplus^{\wr}\, \left[0^{\wr},J_{B,C}\right]
 \\
 &= \left[
 0^{\wr} \oplus^{\wr} 0^{\wr},\,
 \pars*{\id_{F(A)} \oplus^{\wr} J_{B,C}} 
 \circ
 \pars*{\id_{0^{\wr}} \oplus^{\wr} \pars*{\tau_{F(A),0^{\wr}}}^{-1} \oplus^{\wr} 
 \id_{F(B) \oplus^{\wr} F(C)}}
 \right]
 \\
 &= \left[
 0^{\wr},\,
 \pars*{\id_{F(A)} \oplus^{\wr} J_{B,C}} 
 \circ
 \pars*{\id_{F(A)} \oplus^{\wr} 
 \id_{F(B) \oplus^{\wr} F(C)}}
 \right] 
 \\
 &= \left[
 0^{\wr},\,
 \id_{F(A)} \oplus^{\wr} J_{B,C} 
 \right]  
\end{align*}
so we reduce to showing
\[
 \xymatrixcolsep{3.6cm}
 \xymatrixrowsep{1.6cm} 
 \xymatrix{
 F(A) \oplus^{\wr} F(B) \oplus^{\wr} F(C) 
 \ar[r]^-{\left[
  0^{\wr},\, J_{A,B} \oplus^{\wr} \id_{F(C)}
 \right]} 
 \ar[d]_-{\left[
 0^{\wr},\,
 \id_{F(A)} \oplus^{\wr} J_{B,C} 
 \right]} 
 &
 F(A \oplus B) \oplus^{\wr} F(C) 
 \ar[d]^-{\left[0^{\wr},\, J_{A \oplus B, C}\right]}  
 \\
 F(A) \oplus^{\wr} F(B \oplus C) 
 \ar[r]^-{\left[0^{\wr},\, J_{A,B \oplus C}\right]}  
 & F(A \oplus B \oplus C)
 }
\]
commutes in $U\GG^{\wr}$, which follows immediately from the commutativity of
\[
 \xymatrixcolsep{3.6cm}
 \xymatrixrowsep{1.6cm} 
 \xymatrix{
 F(A) \oplus^{\wr} F(B) \oplus^{\wr} F(C) 
 \ar[r]^-{
  J_{A,B} \oplus^{\wr} \id_{F(C)}} 
 \ar[d]_-{
 \id_{F(A)} \oplus^{\wr} J_{B,C} 
 } 
 &
 F(A \oplus B) \oplus^{\wr} F(C) 
 \ar[d]^-{J_{A \oplus B, C}}  
 \\
 F(A) \oplus^{\wr} F(B \oplus C) 
 \ar[r]^-{J_{A,B \oplus C}}  
 & F(A \oplus B \oplus C)
 }
\]
in $\GG^{\wr}$ as $(F,J,\psi)$ is monoidal (Definition \ref{monoidal-F}). Similarly, the properties of $[0^{\wr},\psi]$ follow from those of $\psi$.

(3): Given 
\[
 \pars*{\GG,\oplus,0,\tau} \xrightarrow{(F,J,\psi)} 
 \pars*{\GG^{\wr},\oplus^{\wr},0^{\wr},\tau^{\wr}} \xrightarrow{(H,L,\eta)} 
 \pars*{\GG^{\square},\oplus^{\square},0^{\square}, \tau^{\square}} \, ,
\]
in $\bmgpd$, since the composition in $\bmgpd$ is simply inherited from $\mongpd$ (functors being braided is a property, not extra structure), we need to verify
\[
 \pars*{U_{H,L,\eta}, \left[0^{\square},L\right], \left[0^{\square},\eta\right]}
 \circ 
 \pars*{U_{F,J,\psi}, \left[0^{\wr},J\right], \left[0^{\wr},\psi\right]} 
 = 
 \pars*{U_{H \circ F, W, \theta}, \left[0^{\square},W\right], \left[0^{\square},\theta\right]} 
 \tag{$\star$} \label{alla}
\]
where $W_{X,Y}$ is the composite
\[
 H(F(X)) \oplus^{\square} H(F(Y)) \xrightarrow{L_{F(X),F(Y)}} 
 H\pars*{F(X) \oplus^{\wr} F(Y)} \xrightarrow{H\pars*{J_{X,Y}}} H(F(X \oplus Y))
\]
for every $X,Y \in \Obj(\GG)$, and $\theta$ is the composite $H(F(0)) \xrightarrow{H(\psi)} H(0^{\wr}) \xrightarrow{\eta} 0^{\square}$ (both composites happen in $\GG^{\square}$). On the other hand, the left hand side of (\ref{alla}) is
\[
 \pars*{U_{H,L,\eta} \circ U_{F,J,\psi},\, \mathbf{W}, \mathbf{t}}
\]
where $\mathbf{W}_{X,Y}$ is the composite
\[
 H(F(X)) \oplus^{\square} H(F(Y)) \xrightarrow{\left[0^{\square}, L_{F(X),F(Y)}\right]} 
 H\pars*{F(X) \oplus^{\wr} F(Y)} \xrightarrow{U_{H,L,\eta}\left[0^{\wr},J_{X,Y}\right]} H(F(X \oplus Y))
\]
for every $X,Y \in \Obj(U\GG) = \Obj(\GG)$, and $\mathbf{t}$ is the composite 
\[
  H(F(0)) \xrightarrow{U_{H,L,\eta}[0^{\wr},\psi]} H(0^{\wr}) 
  \xrightarrow{[0^{\square},\eta]} 0^{\square}
\]
(both composites happen in $U\GG^{\square}$). By construction (Proposition \ref{UF-oldu}) we have
\[
 U_{H,L,\eta}[0^{\wr},J_{X,Y}] = \left[
 H(0^{\wr}), H(J_{X,Y}) \circ L_{0^{\wr},F(X) \oplus^{\wr} F(Y)}
 \right]
\]
and
\[
 U_{H,L,\eta}[0^{\wr},\psi] = \left[
 H(0^{\wr}), H(\psi) \circ L_{0^{\wr},F(0)}
 \right] \, .
\]
Since $L_{0^{\wr},F(0)} = \eta \oplus^{\square} \id_{H(F(0))}$, the diagram
\[
 \xymatrixcolsep{3cm}
 \xymatrixrowsep{1.2cm} 
 \xymatrix{
 H(0^{\wr}) \oplus^{\square} H(F(0)) 
 \ar[r]^-{
  L_{0^{\wr},F(0)}} 
 \ar[d]_-{
 \eta \,\oplus^{\square}\, \id_{H(F(0))} 
 } 
 &
 H(0^{\wr} \oplus^{\wr} F(0)) = H(F(0)) 
 \ar[d]^-{H(\psi)}  
 \\
 0^{\square} \oplus^{\square} H(F(0)) = H(F(0)) 
 \ar[r]^-{H(\psi)}  
 & H(0^{\wr})
 }
\]
commutes in $\GG^{\square}$, and hence
\begin{align*}
 U_{H,L,\eta}[0^{\wr},\psi] = \left[
 H(0^{\wr}), H(\psi) \circ L_{0^{\wr},F(0)}
 \right] = \left[ 0^{\square}, H(\psi)\right]
\end{align*}
in $U\GG^{\square}$. In a similar fashion, we also have
\begin{align*}
 U_{H,L,\eta}[0^{\wr},J_{X,Y}] 
 &= \left[
 H(0^{\wr}), H(J_{X,Y}) \circ L_{0^{\wr},F(X) \oplus^{\wr} F(Y)}
 \right]
 \\
 &=\left[
 0^{\square}, H(J_{X,Y})
 \right]
\end{align*}
From here, we get the equalities $\mathbf{W} = \left[0^{\square}, W\right]$ and $\mathbf{t} = \left[0^{\square}, \theta \right]$. The equality 
\[
  U_{H,L,\eta} \circ U_{F,J,\psi} = U_{H \circ F, W, \theta}
\]
was already checked in the proof of Proposition \ref{UF-oldu}. These three equalities together establish (\ref{alla}).
\end{proof}

\begin{proof}[Proof of \textbf{\emph{Proposition \ref{conj-action-Hk}}}]
 For every $A \in \Obj(\GG^{\wr})$, choose $\ov{A} \in \Obj(\GG)$ with an isomorphism
 \[
  \mu_{A} \colon F(\ov{A}) \to A \, .
 \]
in $\GG^{\wr}$. Hence on objects, we can define
\begin{align*}
  \mathbf{H}_{k} \colon U\GG^{\wr} &\to \lMod{\zz}
  \\
  A &\mapsto \co_{k}(\Aut_{\K}(\ov{A}))
\end{align*}
 Moreover, for every $g \colon A \to B$ in $\GG^{\wr}$, as $F$ is full, we may pick $\ov{g} \colon \ov{A} \to \ov{B}$ in $\GG$ such that
\[
 \xymatrixcolsep{2.1cm}
 \xymatrixrowsep{1.4cm} 
 \xymatrix{
 F(\ov{A}) \ar[r]^{F(\ov{g})} \ar[d]_-{\mu_{A}} &
 F(\ov{B}) \ar[d]^-{\mu_{B}}
 \\
 A \ar[r]_{g} & B 
 }
\] 
commutes in $\GG^{\wr}$. Moreover due to the chain of (iso)morphisms
\[
 \xymatrixcolsep{2.1cm}
 \xymatrixrowsep{1.4cm} 
 \xymatrix{
 F\pars*{\ov{A} \oplus \ov{B}}  &
 F\pars*{\ov{A}} \oplus^{\wr} F\pars*{\ov{B}} \ar[l]_-{J_{\ov{A},\ov{B}}} 
 \ar[r]^-{\mu_{A} \oplus^{\wr} \mu_{B}} & A \oplus^{\wr} B &
 F\pars*{\ov{A \oplus^{\wr} B}} \ar[l]_{\mu_{A \oplus^{\wr} B}}
 }
\] 
in $\GG^{\wr}$ and $F$ being full, there exists (for every $A,B \in \Obj(\GG^{\wr})$) a morphism
\[
 \eta_{A,B} \colon \ov{A} \oplus \ov{B} \to \ov{A \oplus^{\wr} B}
\]
in $\GG$ such that
\[
 F(\eta_{A,B}) = \mu_{A \oplus^{\wr} B}^{-1} \circ 
 \pars*{\mu_{A} \oplus^{\wr} \mu_{B}} \circ J_{\ov{A},\ov{B}}^{-1}
\]
Hence given $[X,f] \colon A \to B$ in $U\GG^{\wr}$, that is a morphism $f \colon X \oplus^{\wr} A \to B$ in $\GG^{\wr}$, we can form the composite
\[
 \xymatrixcolsep{2.1cm}
 \xymatrixrowsep{1.4cm} 
 \xymatrix{
 \ov{X} \oplus \ov{A} \ar[r]^-{\eta_{X,A}}
 &
 \ov{X \oplus^{\wr} A} \ar[r]^-{\ov{f}}  &
 \ov{B}
 }
\] 
in $\GG$, hence a morphism $\left[\,\ov{X}, \ov{f} \circ \eta_{X,A}\right] \colon \ov{A} \to \ov{B}$ in $U\GG$. Therefore on morphisms, we can consider the assignment
\begin{align*}
 \mathbf{H}_{k} \colon \Hom_{U\GG^{\wr}}(A,B) &\to 
 \Hom_{\zz}\pars*{\co_{k}\pars*{\Aut_{\K}(\ov{A})},\,
 \co_{k}\pars*{\Aut_{\K}(\ov{B})}}
 \\
 [X,f] &\mapsto \co_{k}\pars*{\Aut_{\K}\!\left[\,\ov{X}, \ov{f} \circ \eta_{X,A}\right]} \, .
\end{align*}
Let us check that the above is well-defined: say $[X,f] = [X',f']$ in $U\GG^{\wr}$, so there is $\alpha \colon X \to X'$ in $\GG^{\wr}$ with $f' \circ (\alpha \oplus^{\wr} \id_{A}) = f$. Here given
\[
 \kappa \in \Aut_{\K}\pars*{\ov{A}} \, ,
\]
we have
\begin{align*}
 \Aut_{\K}\!\left[\,\ov{X}, \ov{f} \circ \eta_{X,A}\right](\kappa)
 &=
 \pars*{\ov{f} \circ \eta_{X,A}} \circ \pars*{\id_{\ov{X}} \oplus \kappa} 
 \circ \pars*{\ov{f} \circ \eta_{X,A}}^{-1}
 \\
 &=
 \ov{f} \circ \eta_{X,A} \circ \pars*{\id_{\ov{X}} \oplus \kappa} 
 \circ \eta_{X,A}^{-1} \circ \ov{f}^{-1}
% \\
% &=
% \ov{f' \circ (\alpha \oplus^{\wr} \id_{A})} \circ \eta_{X,A} \circ \pars*{\id_{\ov{X}} \oplus \kappa} 
% \circ \eta_{X,A}^{-1} \circ \ov{f' \circ (\alpha \oplus^{\wr} \id_{A})}^{-1} 
\end{align*}
as an element of $\Aut_{\K}(\ov{B})$, which we want to compare with
\begin{align*}
 \Aut_{\K}\!\left[\,\ov{X'}, \ov{f'} \circ \eta_{X',A}\right](\kappa)
 =
 \ov{f'} \circ \eta_{X',A} \circ \pars*{\id_{\ov{X'}} \oplus \kappa} 
 \circ \eta_{X',A}^{-1} \circ \ov{f'}^{-1} \, .
\end{align*}
%First, since
%\[
% \xymatrixcolsep{2.7cm}
% \xymatrixrowsep{1.4cm} 
% \xymatrix{
% F\pars*{\ov{X} \oplus \ov{A}} \ar[r]^-{F\pars*{\ov{\alpha} \oplus \id_{\ov{A}}}} 
% \ar[d]_-{J^{-1}_{\ov{X},\ov{A}}} &
% F\pars*{\ov{X'} \oplus \ov{A}} \ar[d]^-{J^{-1}_{\ov{X'},\ov{A}}} 
% \ar[r]^-{F\pars*{\ov{\alpha^{-1}} \oplus \id_{\ov{A}}}} &
% F\pars*{\ov{X} \oplus \ov{A}} \ar[d]^-{J^{-1}_{\ov{X},\ov{A}}}
% \\ 
% F\pars*{\ov{X}} \oplus^{\wr} F\pars*{\ov{A}} 
% \ar[r]^-{F\pars*{\ov{\alpha}} \oplus^{\wr} F\pars*{\id_{\ov{A}}}} 
% \ar[d]_-{\mu_{X} \oplus^{\wr} \mu_{A}} &
% F\pars*{\ov{X'}} \oplus^{\wr} F\pars*{\ov{A}} \ar[d]^-{\mu_{X'} \oplus^{\wr} \mu_{A}} 
% \ar[r]^-{F\pars*{\ov{\alpha^{-1}}} \oplus^{\wr} F\pars*{\id_{\ov{A}}}} &
% F\pars*{\ov{X}} \oplus^{\wr} F\pars*{\ov{A}} \ar[d]^-{\mu_{X} \oplus^{\wr} \mu_{A}}
% \\
% X \oplus^{\wr} A \ar[r]_-{\alpha \oplus^{\wr} \id_{A}} & 
% X' \oplus^{\wr} A \ar[r]_-{\alpha^{-1} \oplus^{\wr} \id_{A}} & X \oplus^{\wr} A 
% }
%\] 
%commutes in $\GG^{\wr}$ with outer vertical composites equal to each other and the bottom horizontal composite equal to the identity, we have
%\begin{align*}
% F\pars*{\ov{\alpha^{-1}} \oplus \id_{\ov{A}}}
% \circ F\pars*{\ov{\alpha} \oplus \id_{\ov{A}}}
% &= \id_{F\pars*{\ov{X} \oplus \ov{A}}} \, ,
% \\
% \pars*{\ov{\alpha^{-1}} \circ \ov{\alpha}} \oplus \id_{\ov{A}}
% = \pars*{\ov{\alpha^{-1}} \oplus \id_{\ov{A}}}
% \circ \pars*{\ov{\alpha} \oplus \id_{\ov{A}}} 
% &\in \Aut_{\K}\pars*{\ov{X} \oplus \ov{A}} \, .
%\end{align*}
We shall show that the composite
\[
 \xymatrixcolsep{2.1cm}
 \xymatrixrowsep{1.4cm} 
 \xymatrix{
 \ov{X} \oplus \ov{A} \ar[d]_-{\ov{\alpha} \oplus \id_{\ov{A}}}
 & \ov{X \oplus^{\wr} A} \ar[l]_-{\eta_{X,A}^{-1}} 
 & \ov{B} \ar[l]_-{\ov{f}^{-1}} 
 \\
 \ov{X'} \oplus \ov{A} \ar[r]^-{\eta_{X',A}}
 & \ov{X' \oplus^{\wr} A} \ar[r]^-{\ov{f'}} 
 & \ov{B}
 }
\] 
 in $\GG$, that we shall denote by $\lambda \in \Aut_{\GG}(\ov{B})$, actually lies in $\K = \ker F$. Indeed, 
\begin{align*}
 F\pars*{\pars*{\ov{\alpha} \oplus \id_{\ov{A}}} \circ \eta_{X,A}^{-1}} &=
 F\pars*{\ov{\alpha} \oplus \id_{\ov{A}}} \circ F\pars*{\eta_{X,A}}^{-1}
 \\
 &=
 F\pars*{\ov{\alpha} \oplus \id_{\ov{A}}} \circ J_{\ov{X},\ov{A}}
 \circ \pars*{\mu_{X} \oplus^{\wr} \mu_{A}}^{-1} \circ \mu_{X \oplus^{\wr} A}
\end{align*}
Here the naturality of $J$ yields a commutative diagram
\[
 \xymatrixcolsep{3.1cm}
 \xymatrixrowsep{1.4cm} 
 \xymatrix{
 F\pars*{\ov{X}} \oplus^{\wr} F\pars*{\ov{A}} 
 \ar[r]^-{F\pars*{\ov{\alpha}} \oplus^{\wr} F\pars*{\id_{\ov{A}}}} 
 \ar[d]_-{J_{\ov{X},\ov{A}}} &
 F\pars*{\ov{X'}} \oplus^{\wr} F\pars*{\ov{A}}
 \ar[d]^-{J_{\ov{X'},\ov{A}}} 
 \\
 F\pars*{\ov{X} \oplus \ov{A}} \ar[r]^-{F\pars*{\ov{\alpha} \oplus \id_{\ov{A}}}} &
 F\pars*{\ov{X'} \oplus \ov{A}} }
\] 
so that
\begin{align*}
 F\pars*{\pars*{\ov{\alpha} \oplus \id_{\ov{A}}} \circ \eta_{X,A}^{-1}}
 &=
 J_{\ov{X'},\ov{A}} 
 \circ \pars*{F\pars*{\ov{\alpha}} \oplus^{\wr} F\pars*{\id_{\ov{A}}}}
 \circ \pars*{\mu_{X}^{-1} \oplus^{\wr} \mu_{A}^{-1}} \circ \mu_{X \oplus^{\wr} A}
 \\
 &=
 J_{\ov{X'},\ov{A}} 
 \circ \pars*{\pars*{F\pars*{\ov{\alpha}} \circ \mu_{X}^{-1}} \oplus^{\wr} \mu_{A}^{-1}}
 \circ \mu_{X \oplus^{\wr} A} \, .
\end{align*}
Since
\[
 \xymatrixcolsep{2.1cm}
 \xymatrixrowsep{1.4cm} 
 \xymatrix{
 F\pars*{\ov{X}} \ar[r]^-{F(\ov{\alpha})} \ar[d]_-{\mu_{X}} &
 F(\ov{X'}) \ar[d]^-{\mu_{X'}}
 \\
 X \ar[r]_-{\alpha} & X' 
 }
\]
commutes in $\GG^{\wr}$ by construction, we get
\[
 F\pars*{\pars*{\ov{\alpha} \oplus \id_{\ov{A}}} \circ \eta_{X,A}^{-1}}
 =
 J_{\ov{X'},\ov{A}} 
 \circ \pars*{\pars*{\mu_{X'}^{-1} \circ \alpha} \oplus^{\wr} \mu_{A}^{-1}}
 \circ \mu_{X \oplus^{\wr} A} \, .
\]
Next, note that
\[ 
 F\pars*{\eta_{X',A}} \circ J_{\ov{X'},\ov{A}} 
 = \mu_{X' \oplus^{\wr} A}^{-1} \circ 
 \pars*{\mu_{X'} \oplus^{\wr} \mu_{A}} \, ,
\]
so that
\begin{align*}
 F\pars*{\eta_{X',A} 
 \circ \pars*{\ov{\alpha} \oplus \id_{\ov{A}}} \circ \eta_{X,A}^{-1}}
 &=
 \mu_{X' \oplus^{\wr} A}^{-1} \circ 
 \pars*{\mu_{X'} \oplus^{\wr} \mu_{A}} 
 \circ \pars*{\pars*{\mu_{X'}^{-1} \circ \alpha} \oplus^{\wr} \mu_{A}^{-1}}
 \circ \mu_{X \oplus^{\wr} A} 
 \\
 &=
 \mu_{X' \oplus^{\wr} A}^{-1} 
 \circ \pars*{\alpha \oplus^{\wr} \id_{A}}
 \circ \mu_{X \oplus^{\wr} A} \, .
\end{align*}
Since
\[
 \xymatrixcolsep{2.1cm}
 \xymatrixrowsep{1.4cm} 
 \xymatrix{
 F\pars*{\ov{X' \oplus^{\wr} A}} \ar[r]^-{F(\ov{f'})} \ar[d]_-{\mu_{X' \oplus^{\wr} A}} &
 F(\ov{B}) \ar[d]^-{\mu_{B}}
 \\
 X' \oplus^{\wr} A \ar[r]_-{f'} & B 
 }
\]
commutes in $\GG^{\wr}$ by construction, we have
\begin{align*}
 F\pars*{\ov{f'} \circ \eta_{X',A} 
 \circ \pars*{\ov{\alpha} \oplus \id_{\ov{A}}} \circ \eta_{X,A}^{-1}}
 &=
 F\pars*{\ov{f'}} \circ
 \mu_{X' \oplus^{\wr} A}^{-1} 
 \circ \pars*{\alpha \oplus^{\wr} \id_{A}}
 \circ \mu_{X \oplus^{\wr} A}
 \\
 &=
 \mu_{B}^{-1} \circ f' 
 \circ \pars*{\alpha \oplus^{\wr} \id_{A}}
 \circ \mu_{X \oplus^{\wr} A} 
 \\
 &=
 \mu_{B}^{-1} \circ f
 \circ \mu_{X \oplus^{\wr} A} \, .
\end{align*}
Since
\[
 \xymatrixcolsep{2.1cm}
 \xymatrixrowsep{1.4cm} 
 \xymatrix{
 F\pars*{\ov{X \oplus^{\wr} A}} \ar[r]^-{F(\ov{f})} \ar[d]_-{\mu_{X \oplus^{\wr} A}} &
 F(\ov{B}) \ar[d]^-{\mu_{B}}
 \\
 X \oplus^{\wr} A \ar[r]_-{f} & B 
 }
\]
commutes in $\GG^{\wr}$ by construction, we have
\begin{align*}
 F(\lambda) &= 
 F\pars*{\ov{f'} \circ \eta_{X',A} 
 \circ \pars*{\ov{\alpha} \oplus \id_{\ov{A}}} \circ \eta_{X,A}^{-1}}
 \circ F\pars*{\ov{f}}^{-1}
 \\
 &=
 \mu_{B}^{-1} \circ f
 \circ \mu_{X \oplus^{\wr} A} \circ F\pars*{\ov{f}}^{-1}
 \\
 &= \mu_{B}^{-1} \circ f \circ f^{-1} \circ \mu_{B} = \id_{F\pars*{\ov{B}}} \, ,
\end{align*}
verifying 
\[
 \ov{f'} \circ \eta_{X',A} 
 \circ \pars*{\ov{\alpha} \oplus \id_{\ov{A}}} \circ \eta_{X,A}^{-1}
 \circ \ov{f}^{-1} =
 \lambda \in \Aut_{\K}\pars*{\ov{B}}
\]
 as desired. Considering the conjugation map
\begin{align*}
 c_{\lambda} \colon \Aut_{\K}(\ov{B}) &\to \Aut_{\K}(\ov{B})
 \\
 h &\mapsto \lambda \circ h \circ \lambda^{-1} \, ,
\end{align*}
for every $\kappa \in \Aut_{\K}\pars*{\ov{A}}$ we have
\begin{align*}
 \pars*{c_{\lambda} \circ \Aut_{\K}\!\left[\,\ov{X}, \ov{f} \circ \eta_{X,A}\right]}(\kappa)
 &=
 \lambda \circ \ov{f} \circ \eta_{X,A} \circ \pars*{\id_{\ov{X}} \oplus \kappa} 
 \circ \eta_{X,A}^{-1} \circ \ov{f}^{-1} \circ \lambda^{-1}
 \\
 &= \pars*{\lambda \circ \ov{f} \circ \eta_{X,A}} 
 \circ \pars*{\id_{\ov{X}} \oplus \kappa}
 \circ \pars*{\lambda \circ \ov{f} \circ \eta_{X,A}}^{-1}
 \\
 &= \pars*{\ov{f'} \circ \eta_{X',A} 
 \circ \pars*{\ov{\alpha} \oplus \id_{\ov{A}}}} 
 \circ \pars*{\id_{\ov{X}} \oplus \kappa}
 \circ \pars*{\lambda \circ \ov{f} \circ \eta_{X,A}}^{-1}
 \\
 &= \ov{f'} \circ \eta_{X',A} 
 \circ \pars*{\ov{\alpha} \oplus \kappa}
 \circ \pars*{\lambda \circ \ov{f} \circ \eta_{X,A}}^{-1}
 \\
 &= \ov{f'} \circ \eta_{X',A} 
 \circ \pars*{\ov{\alpha} \oplus \kappa}
 \circ \pars*{\ov{f'} \circ \eta_{X',A} 
 \circ \pars*{\ov{\alpha} \oplus \id_{\ov{A}}}}^{-1} 
 \\
 &= \ov{f'} \circ \eta_{X',A} 
 \circ \pars*{\ov{\alpha} \oplus \kappa}
 \circ \pars*{\ov{\alpha}^{-1} \oplus \id_{\ov{A}}} \circ \eta_{X',A}^{-1}
 \circ \ov{f'}^{-1}
 \\
 &= \ov{f'} \circ \eta_{X',A} 
 \circ \pars*{\id_{\ov{X'}} \oplus \kappa}
 \circ \eta_{X',A}^{-1}
 \circ \ov{f'}^{-1} 
 \\
 &= \Aut_{\K}\!\left[\,\ov{X'}, \ov{f'} \circ \eta_{X',A}\right](\kappa) \, ,
\end{align*} 
and hence
\[
 c_{\lambda} \circ \Aut_{\K}\!\left[\,\ov{X}, \ov{f} \circ \eta_{X,A}\right]
 = \Aut_{\K}\!\left[\,\ov{X'}, \ov{f'} \circ \eta_{X',A}\right] \, .
\]
Since 
\[
  \co_{k}(c_{\lambda}) \colon \co_{k}\pars*{\Aut_{\K}\pars*{\ov{B}}} 
  \to \co_{k}\pars*{\Aut_{\K}\pars*{\ov{B}}}
\]
is the identity map by \cite[Corollary II.6.2]{brown-coh-book}, we have
\begin{align*}
 \co_{k}\pars*{\Aut_{\K}\!\left[\,\ov{X'}, \ov{f'} \circ \eta_{X',A}\right]}
 &=
 \co_{k}\pars*{c_{\lambda} \circ \Aut_{\K}\!\left[\,\ov{X}, \ov{f} \circ \eta_{X,A}\right]}
 \\
 &=
 \co_{k}\pars*{c_{\lambda}} \circ 
 \co_{k}\pars*{\Aut_{\K}\!\left[\,\ov{X}, \ov{f} \circ \eta_{X,A}\right]}
 \\
 &=
 \co_{k}\pars*{\Aut_{\K}\!\left[\,\ov{X}, \ov{f} \circ \eta_{X,A}\right]} 
\end{align*}
as desired. 

Now that we have established the well-definition of $\mathbf{H}_{k}$, let us observe its functoriality. The identity map 
\(
 [0^{\wr},\id_{A}] \colon A \to A
\) in $U\GG^{\wr}$ is sent to
\[
 \mathbf{H}_{k}[0^{\wr},\id_{A}] = \co_{k}\pars*{\Aut_{\K}\!\left[
 \ov{0^{\wr}}, \ov{\id_{A}} \circ \eta_{0^{\wr},A}
 \right]}
\]
Since $F$ is full, there exists $\zeta \colon 0 \to \ov{0^{\wr}}$ in $\GG$ such that $F(\zeta)$ is the composite
\[
 F(0) \xrightarrow{\psi} 0^{\wr} \xrightarrow{\mu_{0^{\wr}}^{-1}} F\pars*{\ov{0^{\wr}}}
\]
in $\GG^{\wr}$. The morphism
\[
 \eta_{0^{\wr},A} \colon \ov{0^{\wr}} \oplus \ov{A} \to \ov{0^{\wr} \oplus^{\wr} A} = \ov{A}
\]
in $\GG$ by construction satisfies
\begin{align*}
 F(\eta_{0^{\wr},A}) &= \mu_{0^{\wr} \oplus^{\wr} A}^{-1} 
 \circ 
 \pars*{\mu_{0^{\wr}} \oplus^{\wr} \mu_{A}} \circ J_{\ov{0^{\wr}},\ov{A}}^{-1}
 \\ &=
 \mu_{A}^{-1} 
 \circ 
 \pars*{\mu_{0^{\wr}} \oplus^{\wr} \mu_{A}} \circ J_{\ov{0^{\wr}},\ov{A}}^{-1} \, .
\end{align*}
The naturality of $J$ yields a commutative diagram
\[
 \xymatrixcolsep{3.1cm}
 \xymatrixrowsep{1.4cm} 
 \xymatrix{
 F(0) \oplus^{\wr} F\pars*{\ov{A}} 
 \ar[r]^-{F(\zeta) \oplus^{\wr} F\pars*{\id_{\ov{A}}}} 
 \ar[d]_-{J_{0,\ov{A}}} &
 F\pars*{\ov{0^{\wr}}} \oplus^{\wr} F\pars*{\ov{A}}
 \ar[d]^-{J_{\ov{0^{\wr}},\ov{A}}} 
 \\
 F\pars*{\ov{A}} = F\pars*{0 \oplus \ov{A}} \ar[r]^-{F\pars*{\zeta \oplus \id_{\ov{A}}}} &
 F\pars*{\ov{0^{\wr}} \oplus \ov{A}}\, , } 
\] 
hence 
\begin{align*}
 F\pars*{\eta_{0^{\wr},A} \circ \pars*{\zeta \oplus \id_{\ov{A}}}}
 &=
 \mu_{A}^{-1} 
 \circ 
 \pars*{\mu_{0^{\wr}} \oplus^{\wr} \mu_{A}} \circ J_{\ov{0^{\wr}},\ov{A}}^{-1}
 \circ F\pars*{\zeta \oplus \id_{\ov{A}}}
 \\
 &=
 \mu_{A}^{-1} 
 \circ 
 \pars*{\mu_{0^{\wr}} \oplus^{\wr} \mu_{A}} 
 \circ \pars*{F(\zeta) \oplus^{\wr} F\pars*{\id_{\ov{A}}}}
 \circ J_{0,\ov{A}}^{-1} 
 \\
 &=
 \mu_{A}^{-1} 
 \circ 
 \pars*{\pars*{\mu_{0^{\wr}} \circ F(\zeta)} \oplus^{\wr} \mu_{A}} 
 \circ J_{0,\ov{A}}^{-1}  
 \\
 &=
 \pars*{\id_{0^{\wr}} \oplus^{\wr} \mu_{A}^{-1}}
 \circ 
 \pars*{\psi \oplus^{\wr} \mu_{A}} 
 \circ J_{0,\ov{A}}^{-1}   
 \\
 &=
 \pars*{\psi \oplus^{\wr} \id_{F\pars*{\ov{A}}}} 
 \circ J_{0,\ov{A}}^{-1} = \id_{F\pars*{\ov{A}}}
\end{align*}
by the defining properties of $(F, J,\psi)$ in Definition \ref{monoidal-F}. On the other hand, by construction
\[
 \xymatrixcolsep{2.1cm}
 \xymatrixrowsep{1.4cm} 
 \xymatrix{
 F(\ov{A}) \ar[r]^{F\pars*{\ov{\id_{A}}}} \ar[d]_-{\mu_{A}} &
 F(\ov{A}) \ar[d]^-{\mu_{A}}
 \\
 A \ar[r]_{\id_{A}} & A 
 }
\] 
commutes in $\GG^{\wr}$. As $\mu_A$ (like every morphism in $\GG^{\wr}$) is an isomorphism, we have 
\[ F\pars*{\ov{\id_{A}}} = \id_{F\pars*{\ov{A}}} \, , \]
hence $\ov{\id_{A}} \in \Aut_{\K}\pars*{\ov{A}}$.
It follows that the morphism 
\[
 \kappa \coloneqq \ov{\id_{A}} \circ 
 \eta_{0^{\wr},A} \circ \pars*{\zeta \oplus \id_{\ov{A}}} \colon \ov{A} \to \ov{A}
\]
in $\GG$ is in fact in $\K$, with
\[
 \left[ \ov{0^{\wr}}, \ov{\id_{A}} \circ \eta_{0^{\wr},A} \right] 
 = \left[0,\kappa\right] \colon \ov{A} \to \ov{A}
\]
in $U\GG$; consequently the group homomorphism
\[
 \Aut_{\K}\!\left[ \ov{0^{\wr}}, \ov{\id_{A}} \circ \eta_{0^{\wr},A} \right] = \Aut_{\K}[0,\kappa] \colon \Aut_{\K}\pars*{\ov{A}} \to \Aut_{\K}\pars*{\ov{A}}
\]
is conjugation by $\kappa \in \Aut_{\K}\pars*{\ov{A}}$. Therefore
\[
 \mathbf{H}_{k}[0^{\wr},\id_{A}] = \co_{k}\pars*{\Aut_{\K}\!\left[
 \ov{0^{\wr}}, \ov{\id_{A}} \circ \eta_{0^{\wr},A}
 \right]} = \id_{\co_{k}\pars*{\Aut_{\K}\pars*{\ov{A}}}}
\]
by \cite[Corollary II.6.2]{brown-coh-book} as desired.

To check $\mathbf{H}_{k}$ preserves composition, let 
\[
A \xrightarrow{[X,f]} B \xrightarrow{[Y,g]} C
\] 
be morphisms in $U\GG^{\wr}$. On one hand, by the functoriality of $\co_{k}$ and $\Aut_{\K}$ we have
\begin{align*}
 \mathbf{H}_{k}[Y,g] \circ \mathbf{H}_{k}[X,f] 
 &=
 \co_{k}\pars*{\Aut_{\K}\!\left[\,\ov{Y}, \ov{g} \circ \eta_{Y,B}\right]}
 \circ
 \co_{k}\pars*{\Aut_{\K}\!\left[\,\ov{X}, \ov{f} \circ \eta_{X,A}\right]}
 \\
 &=
 \co_{k}\pars*{
 \Aut_{\K} \pars*{
 \left[\,\ov{Y}, \ov{g} \circ \eta_{Y,B}\right] \circ
 \left[\,\ov{X}, \ov{f} \circ \eta_{X,A}\right]
 }
 }
\end{align*}
where the morphisms
\[
  \ov{A} \xrightarrow{\left[\ov{X},\, \ov{f} \circ \eta_{X,A}\right]} \ov{B} 
  \xrightarrow{\left[\ov{Y},\, \ov{g} \circ \eta_{Y,B}\right]} \ov{C}
\] 
in $U\GG$ compose as
\begin{align*}
 \left[\,\ov{Y}, \ov{g} \circ \eta_{Y,B}\right] \circ
 \left[\,\ov{X}, \ov{f} \circ \eta_{X,A}\right]
 &=
 \left[
 \ov{Y} \oplus \ov{X},\,
 \ov{g} \circ \eta_{Y,B} \circ
 \pars*{\id_{\ov{Y}} \oplus \pars*{\ov{f} \circ \eta_{X,A}}}
 \right] \, .
\end{align*}
On the other hand,
\begin{align*}
 \mathbf{H}_{k}\pars*{
 [Y,g] \circ [X,f]}
 &=
 \mathbf{H}_{k}\pars*{
 [Y \oplus^{\wr} X,\, g \circ (\id_{Y} \oplus^{\wr} f)]
 }
 \\
 &= \co_{k}\pars*{
 \Aut_{\K}\!
 \left[
 \ov{Y \oplus^{\wr} X},\, \ov{g \circ (\id_{Y} \oplus^{\wr} f)}
 \circ \eta_{Y \oplus^{\wr} X, A}
 \right]
 } \, .
\end{align*}
Consider the composite
\[
 F\pars*{\ov{Y \oplus^{\wr} X}} \xrightarrow{\mu_{Y \oplus^{\wr} X}} Y \oplus^{\wr} X
 \xrightarrow{\mu_{Y}^{-1} \oplus^{\wr} \mu_{X}^{-1}}
 F\pars*{\ov{Y}} \oplus^{\wr} F\pars*{\ov{X}}
 \xrightarrow{J_{\ov{Y},\ov{X}}} F\pars*{\ov{Y} \oplus \ov{X}}
\]
in $\GG^{\wr}$. Since $F$ is full, there exists $\rho \colon \ov{Y \oplus^{\wr} X} \to \ov{Y} \oplus \ov{X}$ in $\GG$ such that
\[
 F(\rho) = J_{\ov{Y},\ov{X}} \circ \pars*{\mu_{Y}^{-1} \oplus^{\wr} \mu_{X}^{-1}}
  \circ \mu_{Y \oplus^{\wr} X} \, .
\]
We would like to compare the composite
\[
 \ov{Y \oplus^{\wr} X} \oplus \ov{A} \xrightarrow{\rho \oplus \id_{\ov{A}}}
 \ov{Y} \oplus \ov{X} \oplus \ov{A} 
 \xrightarrow{\ov{g}\, \circ\, \eta_{Y,B} \,\circ\,
 \pars*{\id_{\ov{Y}} \oplus \pars*{\ov{f} \circ \eta_{X,A}}}}
 \ov{C}
\]
with
\[
 \ov{Y \oplus^{\wr} X} \oplus \ov{A} 
 \xrightarrow{\ov{g \circ (\id_{Y} \oplus^{\wr} f)}
 \,\circ\, \eta_{Y \oplus^{\wr} X, A}} \ov{C} \, ,
\]
both morphisms in $\GG$ for which we shortly write $u$ and $v$, respectively. Taking the image of $u$ under $F$, we get
\begin{align*}
 F(u) &= F\pars*{\ov{g} \circ \eta_{Y,B} \circ
 \pars*{\id_{\ov{Y}} \oplus \pars*{\ov{f} \circ \eta_{X,A}}}
 \circ \pars*{\rho \oplus \id_{\ov{A}}}
 }
 \\
 &= F\pars*{\ov{g}} \circ F\pars*{\eta_{Y,B}} \circ 
 F\pars*{\id_{\ov{Y}} \oplus \pars*{\ov{f} \circ \eta_{X,A}}} \circ
 F\pars*{\rho \oplus \id_{\ov{A}}}
 \\
 &= F\pars*{\ov{g}} \circ \mu_{Y \oplus^{\wr} B}^{-1} \circ 
 \pars*{\mu_{Y} \oplus^{\wr} \mu_{B}} \circ J_{\ov{Y},\ov{B}}^{-1} \circ 
 F\pars*{\id_{\ov{Y}} \oplus \pars*{\ov{f} \circ \eta_{X,A}}} \circ
 F\pars*{\rho \oplus \id_{\ov{A}}} 
\end{align*}
The naturality of $J$ yields a commutative diagram
\[
 \xymatrixcolsep{3.4cm}
 \xymatrixrowsep{1.4cm} 
 \xymatrix{
 F\pars*{\ov{Y}} \oplus^{\wr} F\pars*{\ov{X} \oplus \ov{A}} 
 \ar[r]^-{F\pars*{\id_{\ov{Y}}} \oplus^{\wr} F\pars*{\ov{f} \circ \eta_{X,A}}} 
 \ar[d]_-{J_{\ov{Y},\ov{X} \oplus \ov{A}}} &
 F\pars*{\ov{Y}} \oplus^{\wr} F\pars*{\ov{B}}
 \ar[d]^-{J_{\ov{Y},\ov{B}}} 
 \\
 F\pars*{\ov{Y} \oplus \ov{X} \oplus \ov{A}} 
 \ar[r]^-{F\pars*{\id_{\ov{Y}} \oplus \pars*{\ov{f} \circ \eta_{X,A}}}} &
 F\pars*{\ov{Y} \oplus \ov{B}}\, , } 
\] 
hence 
\begin{align*}
 F(u) &= F\pars*{\ov{g}} \circ F\pars*{\eta_{Y,B}} \circ 
 F\pars*{\id_{\ov{Y}} \oplus \pars*{\ov{f} \circ \eta_{X,A}}} \circ
 F\pars*{\rho \oplus \id_{\ov{A}}}
 \\
 &= F\pars*{\ov{g}} \circ \mu_{Y \oplus^{\wr} B}^{-1} \circ 
 \pars*{\mu_{Y} \oplus^{\wr} \mu_{B}} \circ 
 \pars*{F\pars*{\id_{\ov{Y}}} \oplus^{\wr} F\pars*{\ov{f} \circ \eta_{X,A}}} \circ 
 J_{\ov{Y},\ov{X} \oplus \ov{A}}^{-1} \circ
 F\pars*{\rho \oplus \id_{\ov{A}}} 
 \\
 &= F\pars*{\ov{g}} \circ \mu_{Y \oplus^{\wr} B}^{-1} \circ 
 \pars*{\mu_{Y} \oplus^{\wr} \pars*{\mu_{B} \circ F\pars*{\ov{f} \circ \eta_{X,A}}}} 
 \circ 
 J_{\ov{Y},\ov{X} \oplus \ov{A}}^{-1} \circ
 F\pars*{\rho \oplus \id_{\ov{A}}} \, .
\end{align*}
Here
\begin{align*}
 \mu_{B} \circ F\pars*{\ov{f} \circ \eta_{X,A}}
 &= \mu_{B} \circ F\pars*{\ov{f}}
 \circ
 \mu_{X \oplus^{\wr} A}^{-1} \circ 
 \pars*{\mu_{X} \oplus^{\wr} \mu_{A}} \circ J_{\ov{X},\ov{A}}^{-1} \, .
\end{align*}
Since
\[
 \xymatrixcolsep{2.1cm}
 \xymatrixrowsep{1.4cm} 
 \xymatrix{
 F\pars*{\ov{X \oplus^{\wr} A}} \ar[r]^-{F(\ov{f})} \ar[d]_-{\mu_{X \oplus^{\wr} A}} &
 F(\ov{B}) \ar[d]^-{\mu_{B}}
 \\
 X \oplus^{\wr} A \ar[r]_-{f} & B 
 }
 \text{ \, and \, }
 \xymatrix{
 F\pars*{\ov{Y \oplus^{\wr} B}} \ar[r]^-{F(\ov{g})} \ar[d]_-{\mu_{Y \oplus^{\wr} B}} &
 F(\ov{C}) \ar[d]^-{\mu_{C}}
 \\
 Y \oplus^{\wr} B \ar[r]_-{g} & C 
 } 
\]
commutes in $\GG^{\wr}$ by construction, we have
\begin{align*}
 \mu_{B} \circ F\pars*{\ov{f} \circ \eta_{X,A}}
 &= f \circ 
 \pars*{\mu_{X} \oplus^{\wr} \mu_{A}} \circ J_{\ov{X},\ov{A}}^{-1} \, ,
\end{align*}
and hence
\begin{align*}
 F(u) &= \mu_{C}^{-1} \circ g \circ 
 \pars*{\mu_{Y} \oplus^{\wr} \pars*{f \circ 
 \pars*{\mu_{X} \oplus^{\wr} \mu_{A}} \circ J_{\ov{X},\ov{A}}^{-1}}} 
 \circ 
 J_{\ov{Y},\ov{X} \oplus \ov{A}}^{-1} \circ
 F\pars*{\rho \oplus \id_{\ov{A}}} \, .
\end{align*}
Pre-composing the above with $J_{\ov{Y \oplus^{\wr} X},\ov{A}}$ and invoking the commutative diagram
\[
 \xymatrixcolsep{3.4cm}
 \xymatrixrowsep{1.4cm} 
 \xymatrix{
 F\pars*{\ov{Y \oplus^{\wr} X}} \oplus^{\wr} F\pars*{\ov{A}} 
 \ar[r]^-{F\pars*{\rho} \oplus^{\wr} F\pars*{\id_{\ov{A}}}} 
 \ar[d]_-{J_{\ov{Y \oplus^{\wr} X},\ov{A}}} &
 F\pars*{\ov{Y} \oplus \ov{X}} \oplus^{\wr} F\pars*{\ov{A}}
 \ar[d]^-{J_{\ov{Y} \oplus \ov{X},\ov{A}}} 
 \\
 F\pars*{\ov{Y \oplus^{\wr} X} \oplus \ov{A}} 
 \ar[r]^-{F\pars*{\rho \oplus \id_{\ov{A}}}} &
 F\pars*{\ov{Y} \oplus \ov{X} \oplus \ov{A}}\, , } 
\] 
we get 
\begin{align*}
 &\,\,\,\,F(u) \circ J_{\ov{Y \oplus^{\wr} X},\ov{A}} 
 \\
 &= \mu_{C}^{-1} \circ g \circ 
 \pars*{\mu_{Y} \oplus^{\wr} \pars*{f \circ 
 \pars*{\mu_{X} \oplus^{\wr} \mu_{A}} \circ J_{\ov{X},\ov{A}}^{-1}}} 
 \circ 
 J_{\ov{Y},\ov{X} \oplus \ov{A}}^{-1} \circ
 J_{\ov{Y} \oplus \ov{X},\ov{A}} \circ 
 \pars*{F\pars*{\rho} \oplus^{\wr} F\pars*{\id_{\ov{A}}}}
 \, .
\end{align*}
Since
 \[
 \xymatrixcolsep{2.4cm}
 \xymatrixrowsep{1.4cm} 
 \xymatrix{
 F\pars*{\ov{Y}} \oplus^{\wr} F\pars*{\ov{X}} \oplus^{\wr} F\pars*{\ov{A}} 
 \ar[r]^-{J_{\ov{Y},\ov{X}} \,\oplus^{\wr}\, \id_{F\pars*{\ov{A}}}} 
 \ar[d]_-{\id_{F\pars*{\ov{Y}}} \oplus^{\wr} J_{\ov{X},\ov{A}}} 
 &
 F\pars*{\ov{Y} \oplus \ov{X}} \oplus^{\wr} F\pars*{\ov{A}} 
 \ar[d]^-{J_{\ov{Y} \oplus \ov{X}, \ov{A}}}  
 \\
 F\pars*{\ov{Y}} \oplus^{\wr} F\pars*{\ov{X} \oplus \ov{A}}
 \ar[r]^-{J_{\ov{Y},\ov{X} \oplus \ov{A}}}  
 & F\pars*{\ov{Y} \oplus \ov{X} \oplus \ov{A}}
 }
 \]
commutes in $\GG^{\wr}$ by the defining properties of $(F, J,\psi)$ in Definition \ref{monoidal-F}, we have
\begin{align*}
 &\,\,\,\,\,J_{\ov{Y},\ov{X} \oplus \ov{A}}^{-1} \circ
 J_{\ov{Y} \oplus \ov{X},\ov{A}} \circ 
 \pars*{F\pars*{\rho} \oplus^{\wr} F\pars*{\id_{\ov{A}}}}
 \\
 &=\pars*{\id_{F\pars*{\ov{Y}}} \oplus^{\wr} J_{\ov{X},\ov{A}}} 
 \circ \pars*{J_{\ov{Y},\ov{X}} \oplus^{\wr} \id_{F\pars*{\ov{A}}}}^{-1}
 \circ
 \pars*{F\pars*{\rho} \oplus^{\wr} F\pars*{\id_{\ov{A}}}}
 \\
 &=\pars*{\id_{F\pars*{\ov{Y}}} \oplus^{\wr} J_{\ov{X},\ov{A}}} 
 \circ \pars*{\pars*{J_{\ov{Y},\ov{X}}^{-1} \circ F(\rho)} \oplus^{\wr} \id_{F\pars*{\ov{A}}}}
 \\
 &=\pars*{\id_{F\pars*{\ov{Y}}} \oplus^{\wr} J_{\ov{X},\ov{A}}} 
 \circ \pars*{\pars*{\pars*{\mu_{Y}^{-1} \oplus^{\wr} \mu_{X}^{-1}}
  \circ \mu_{Y \oplus^{\wr} X}} \oplus^{\wr} \id_{F\pars*{\ov{A}}}} \, .
\end{align*}
As a result,
\begin{align*}
 &\,\,\,\,F(u) \circ J_{\ov{Y \oplus^{\wr} X},\ov{A}} 
 \\
 &= \mu_{C}^{-1} \circ g \circ 
 \pars*{\mu_{Y} \oplus^{\wr} \pars*{f \circ 
 \pars*{\mu_{X} \oplus^{\wr} \mu_{A}}}} 
 \circ 
 \pars*{\pars*{\pars*{\mu_{Y}^{-1} \oplus^{\wr} \mu_{X}^{-1}}
  \circ \mu_{Y \oplus^{\wr} X}} \oplus^{\wr} \id_{F\pars*{\ov{A}}}}
 \\
 &= \mu_{C}^{-1} \circ g \circ 
 \pars*{\id_{Y} \oplus^{\wr} f}
 \circ
 \pars*{\mu_{Y} \oplus^{\wr} \mu_{X} \oplus^{\wr} \mu_{A}} 
 \circ 
 \pars*{\pars*{\pars*{\mu_{Y}^{-1} \oplus^{\wr} \mu_{X}^{-1}}
  \circ \mu_{Y \oplus^{\wr} X}} \oplus^{\wr} \id_{F\pars*{\ov{A}}}}
 \\
 &= \mu_{C}^{-1} \circ g \circ 
 \pars*{\id_{Y} \oplus^{\wr} f}
 \circ
 \pars*{\mu_{Y \oplus^{\wr} X} \oplus^{\wr} \mu_{A}} 
  \, .   
\end{align*}
Taking the image of $v$ under $F$, we get
\begin{align*}
 F(v) &= F\pars*{\ov{g \circ (\id_{Y} \oplus^{\wr} f)}
 \circ \eta_{Y \oplus^{\wr} X, A}}
 \\
 &=
 F\pars*{\ov{g \circ (\id_{Y} \oplus^{\wr} f)}}
 \circ F\pars*{\eta_{Y \oplus^{\wr} X, A}}
 \\
 &=
 F\pars*{\ov{g \circ (\id_{Y} \oplus^{\wr} f)}}
 \circ 
 \mu_{Y \oplus^{\wr} X \oplus^{\wr} A}^{-1} \circ 
 \pars*{\mu_{Y \oplus^{\wr} X} \oplus^{\wr} \mu_{A}} \circ 
 J_{\ov{Y \oplus^{\wr} X},\ov{A}}^{-1}  
\end{align*}
Since
\[
 \xymatrixcolsep{3.1cm}
 \xymatrixrowsep{1.4cm} 
 \xymatrix{
 F\pars*{\ov{Y \oplus^{\wr} X \oplus^{\wr} A }} \ar[r]^-{F\pars*{\,\ov{g \circ (\id_{Y} \oplus^{\wr} f)}\,}} 
 \ar[d]_-{\mu_{Y \oplus^{\wr} X \oplus^{\wr} A}} &
 F(\ov{C}) \ar[d]^-{\mu_{C}}
 \\
 Y \oplus^{\wr} X \oplus^{\wr} A \ar[r]_-{g \circ (\id_{Y} \oplus^{\wr} f)} & C
 }
\]
commutes in $\GG^{\wr}$ by construction, we have
\begin{align*}
 F(v) \circ J_{\ov{Y \oplus^{\wr} X},\ov{A}} 
 &=
 \mu_{C}^{-1}
 \circ 
 \pars*{g \circ (\id_{Y} \oplus^{\wr} f)}\circ 
 \pars*{\mu_{Y \oplus^{\wr} X} \oplus^{\wr} \mu_{A}}
 \\
 &= F(u) \circ J_{\ov{Y \oplus^{\wr} X},\ov{A}} 
\end{align*}
and hence $F(u) = F(v)$, so $v \circ u^{-1} \colon \ov{C} \to \ov{C}$ actually lies in $\K$. Going back to the definitions of $u,v$, we have
\begin{align*}
 \mathbf{H}_{k}[Y,g] \circ \mathbf{H}_{k}[X,f] 
 &=
 \co_{k}\pars*{
 \Aut_{\K}\!
 \left[
 \ov{Y} \oplus \ov{X},\,
 \pars*{\rho^{-1} \oplus \id_{\ov{A}}} \circ u
 \right]
 }
 \\
 &=
 \co_{k}\pars*{
 \Aut_{\K}\!
 \left[
 \ov{Y \oplus^{\wr} X},u
 \right]
 }
\end{align*}
and hence by the functoriality of $\co_{k}$ and $\Aut_{\K}$ we have
\begin{align*}
 \co_{k}\pars*{\Aut_{\K}[0,v \circ u^{-1}]} \circ 
 \mathbf{H}_{k}[Y,g] \circ \mathbf{H}_{k}[X,f] 
 &=
 \co_{k}\pars*{
 \Aut_{\K}\!
 \left[
 \ov{Y \oplus^{\wr} X},v
 \right]
 }
 \\
 &= \mathbf{H}_{k}\pars*{
 [Y,g] \circ [X,f]} \, .
\end{align*}
As the group homomorphism
\[
 \Aut_{\K}[0,v \circ u^{-1}] \colon \Aut_{\K}\pars*{\ov{C}} \to \Aut_{\K}\pars*{\ov{C}}
\]
is by definition conjugation by $v \circ u^{-1} \in \Aut_{\K}\pars*{\ov{C}}$, we have
\[
 \co_{k}\pars*{\Aut_{\K}[0,v \circ u^{-1}]} = \id_{\co_{k}\pars*{\Aut_{\K}\pars*{\ov{C}}}}
\]
by \cite[Corollary II.6.2]{brown-coh-book}, hence
\[ 
 \mathbf{H}_{k}[Y,g] \circ \mathbf{H}_{k}[X,f] 
 = \mathbf{H}_{k}\pars*{
 [Y,g] \circ [X,f]} \, .
\]
as desired.

Moving on to the natural isomorphism claim, since here we need to deal with objects directly coming from $\GG$, we shall use uncapitalized letters for them. Given $a \in \Obj(U\GG) = \Obj(\GG)$, we need an isomorphism
\[ 
 \co_{k}\pars*{\Aut_{\K}\pars*{\ov{F(a)}}} = \mathbf{H}_{k}\pars*{F(a)}  
 \to
 \co_{k}\pars*{\Aut_{\K}(a)}
\] 
of $\zz$-modules. Note that we have already have an isomorphism
\[
 \mu_{F(a)} \colon F\pars*{\ov{F(a)}} \to F(a)
\]
in $\GG^{\wr}$. As $F$ is full, we may (and do) pick an isomorphism $\nu_{a} \colon \ov{F(a)} \to a$ in $\GG$ with $F(\nu_{a}) = \mu_{F(a)}$. From here we get an isomorphism
\begin{align*}
 \theta_{a} \colon \Aut_{\K}\pars*{\ov{F(a)}} &\to \Aut_{\K}\pars*{a}
 \\
 \kappa &\mapsto \nu_{a} \circ \kappa \circ \nu_{a}^{-1}
\end{align*}
of groups. We shall show that the isomorphisms
\[ 
 \co_{k}\pars*{\theta_{a}} \colon \co_{k}\pars*{\Aut_{\K}\pars*{\ov{F(a)}}} = \mathbf{H}_{k}\pars*{F(a)}  
 \to
 \co_{k}\pars*{\Aut_{\K}(a)}
\] 
of $\zz$-modules are natural in $a \in \Obj(U\GG)$. To that end, fix $[x,\vphi] \colon a \to b$ in $U\GG$ and noting that
\[
 U_{\mathbf{F}}[x,\vphi] = \left[F(x),\,F(\vphi) \circ J_{x,a}\right] \colon F(a) \to F(b)
\]
in $U\GG^{\wr}$, consider the square
\[
 \xymatrixcolsep{5.1cm}
 \xymatrixrowsep{1.4cm} 
 \xymatrix{
 \Aut_{\K}\pars*{\ov{F(a)}} \ar[r]^-{\Aut_{\K}\!\left[\,\ov{F(x)},\, 
 \ov{F(\vphi) \circ J_{x,a}}\,\circ \,\eta_{F(x),F(a)}\right]} 
 \ar[d]_-{\theta_{a}} &
 \Aut_{\K}\pars*{\ov{F(b)}} \ar[d]^-{\theta_{b}}
 \\
 \Aut_{\K}(a) \ar[r]_-{\Aut_{\K}[x,\vphi]} & \Aut_{\K}(b)
 }
\]
of groups. Let us write
\begin{align*}
 p &\coloneqq \Aut_{\K}[x,\vphi] \circ \theta_{a}\,,
 \\
 q &\coloneqq 
 \theta_{b} \circ \Aut_{\K}\!\left[\,\ov{F(x)},\, 
 \ov{F(\vphi) \circ J_{x,a}}\circ \eta_{F(x),F(a)}\right] \, .
\end{align*}
For every $\kappa \in \Aut_{\K}\pars*{\ov{F(a)}}$, on one hand we have
\begin{align*}
 p(\kappa)
 &=
 \Aut_{\K}[x,\vphi]\pars*{\nu_{a} \circ \kappa \circ \nu_{a}^{-1}}
 \\
 &=
 \vphi \circ \pars*{\id_{x} \oplus \pars*{\nu_{a} \circ \kappa \circ \nu_{a}^{-1}}} \circ \vphi^{-1} 
 \\
 &=
 \vphi \circ \pars*{
 \pars*{\nu_{x} \circ \id_{\ov{F(x)}} \circ \nu_{x}^{-1}} \oplus \pars*{\nu_{a} 
 \circ \kappa \circ \nu_{a}^{-1}}
 } 
 \circ \vphi^{-1}  
 \\
 &=
 \vphi \circ \pars*{\nu_{x} \oplus \nu_{a}} \circ 
 \pars*{\id_{\ov{F(x)}} \oplus \kappa} \circ 
 \pars*{\nu_{x} \oplus \nu_{a}}^{-1} \circ
 \vphi^{-1}  
\end{align*}
and on the other hand we have
\begin{align*}
 q (\kappa)
 &= \theta_{b}\pars*{\ov{F(\vphi) \circ J_{x,a}}\circ \eta_{F(x),F(a)}
 \circ \pars*{\id_{\ov{F(x)}} \oplus \kappa} \circ
 \eta_{F(x),F(a)}^{-1} \circ \ov{F(\vphi) \circ J_{x,a}}^{-1}
 }
 \\
 &= \nu_{b} \circ \ov{F(\vphi) \circ J_{x,a}}\circ \eta_{F(x),F(a)}
 \circ \pars*{\id_{\ov{F(x)}} \oplus \kappa} \circ
 \eta_{F(x),F(a)}^{-1} \circ \ov{F(\vphi) \circ J_{x,a}}^{-1}
 \circ \nu_{b}^{-1} \, .
\end{align*}
We shall show that the composite
\[
 \xymatrixcolsep{2.1cm}
 \xymatrixrowsep{1.4cm} 
 \xymatrix{
 \ov{F(x)} \oplus \ov{F(a)} \ar[r]^-{\eta_{F(x),F(a)}}  
 & \ov{F(x) \oplus^{\wr} F(a)} \ar[r]^-{\ov{F(\vphi) \circ J_{x,a}}} 
 & \ov{F(b)} \ar[d]^{\nu_{b}}
 \\ 
 \ov{F(x)} \oplus \ov{F(a)} 
 & x \oplus a \ar[l]_-{\pars*{\nu_{x} \oplus \nu_{a}}^{-1}} 
 & b \ar[l]_-{\vphi^{-1}} 
 }
\] 
 in $\GG$, that we shall denote by $\xi \in \Aut_{\GG}\pars*{\ov{F(x)} \oplus \ov{F(a)}}$, actually lies in $\K = \ker F$.
Here
\begin{align*}
 F\pars*{\ov{F(\vphi) \circ J_{x,a}}\circ \eta_{F(x),F(a)}}
 &=
 F\pars*{\ov{F(\vphi) \circ J_{x,a}}} \circ
 \mu_{F(x) \oplus^{\wr} F(a)}^{-1} \circ 
 \pars*{\mu_{F(x)} \oplus^{\wr} \mu_{F(a)}} \circ J_{\ov{F(x)},\ov{F(a)}}^{-1}
 \\
 &=
 F\pars*{\ov{F(\vphi) \circ J_{x,a}}} \circ
 \mu_{F(x) \oplus^{\wr} F(a)}^{-1} \circ 
 \pars*{F(\nu_{x}) \oplus^{\wr} F(\nu_{a})} \circ J_{\ov{F(x)},\ov{F(a)}}^{-1}
\end{align*}
Since
\[
 \xymatrixcolsep{3.1cm}
 \xymatrixrowsep{1.4cm} 
 \xymatrix{
 F\pars*{\ov{F(x) \oplus^{\wr} F(a)}} \ar[r]^-{F\pars*{\,\ov{F(\vphi) \circ J_{x,a}}\,}} 
 \ar[d]_-{\mu_{F(x) \oplus^{\wr} F(a)}} &
 F(\ov{F(b)}) \ar[d]^-{\mu_{F(b)} = F(\nu_{b})}
 \\
 F(x) \oplus^{\wr} F(a) \ar[r]_-{F(\vphi) \circ J_{x,a}} & F(b) 
 }
\]
commutes by construction, we have
\begin{align*}
 F\pars*{\ov{F(\vphi) \circ J_{x,a}}\circ \eta_{F(x),F(a)}}
 &=
 F(\nu_{b})^{-1} \circ
 F(\vphi) \circ J_{x,a} \circ 
 \pars*{F(\nu_{x}) \oplus^{\wr} F(\nu_{a})} \circ J_{\ov{F(x)},\ov{F(a)}}^{-1} \, .
\end{align*}
The naturality of $J$ yields a commutative diagram
\[
 \xymatrixcolsep{3.1cm}
 \xymatrixrowsep{1.4cm} 
 \xymatrix{
 F\pars*{\ov{F(x)}} \oplus^{\wr} F\pars*{\ov{F(a)}} 
 \ar[r]^-{F(\nu_{x}) \oplus^{\wr} F(\nu_{a})} 
 \ar[d]_-{J_{\ov{F(x)},\ov{F(a)}}} &
 F\pars*{x} \oplus^{\wr} F\pars*{a}
 \ar[d]^-{J_{x,a}} 
 \\
 F\pars*{\ov{F(x)} \oplus \ov{F(a)}} \ar[r]^-{F\pars*{\nu_{x} \oplus \nu_{a}}} &
 F\pars*{x \oplus a}\, , } 
\] 
hence
\begin{align*}
 F\pars*{\nu_{b} \circ \ov{F(\vphi) \circ J_{x,a}}\circ \eta_{F(x),F(a)}}
 &=
 F(\nu_{b}) \circ F\pars*{\ov{F(\vphi) \circ J_{x,a}}\circ \eta_{F(x),F(a)}}
 \\
 &=
 F(\vphi) \circ F\pars*{\nu_{x} \oplus \nu_{a}}
 \\
 &= F\pars*{\vphi \circ \pars*{\nu_{x} \oplus \nu_{a}}} \, ,
\end{align*}
meaning
\begin{align*}
 F(\xi) &= F\pars*{
 \pars*{\vphi \circ \pars*{\nu_{x} \oplus \nu_{a}}}^{-1}
 \circ
 \pars*{\nu_{b} \circ \ov{F(\vphi) \circ J_{x,a}}\circ \eta_{F(x),F(a)}}
 }
 \\
 &= \id_{F\pars*{\,\ov{F(x)} \oplus \ov{F(a)}\,}}
\end{align*}
as desired: $\xi \in \Aut_{\K}\pars*{\ov{F(x)} \oplus \ov{F(a)}}$. It follows that the composite
\[
 \xymatrixcolsep{1.8cm}
 \xymatrixrowsep{1.4cm} 
 \xymatrix{
 b \ar[r]^-{\vphi^{-1}}  
 & x \oplus a \ar[r]^-{\pars*{\nu_{x} \oplus \nu_{a}}^{-1}} 
 & \ov{F(x)} \oplus \ov{F(a)}  \ar[d]^{\xi}
 \\ 
 b 
 & x \oplus a \ar[l]_-{\vphi} 
 & \ov{F(x)} \oplus \ov{F(a)} \ar[l]_-{\nu_{x} \oplus \nu_{a}} 
 }
\] 
 in $\GG$, that we shall denote by $\eps$ similarly satisfies $\eps \in \Aut_{\K}(b)$. Writing
\begin{align*}
  c_{\eps} \colon \Aut_{\K}(b) &\to \Aut_{\K}(b)
  \\
  h &\mapsto \eps \circ h \circ \eps^{-1}
\end{align*}
for the conjugation map, we have for every $\kappa \in \Aut_{\K}\pars*{\ov{F(a)}}$ the equality
\begin{align*}
 \pars*{c_{\eps} \circ p}(\kappa) &= c_{\eps}\pars*{
 \vphi \circ \pars*{\nu_{x} \oplus \nu_{a}} \circ 
 \pars*{\id_{\ov{F(x)}} \oplus \kappa} \circ 
 \pars*{\nu_{x} \oplus \nu_{a}}^{-1} \circ
 \vphi^{-1}  
 }
 \\
 &= \vphi \circ \pars*{\nu_{x} \oplus \nu_{a}} \circ \xi \circ
 \pars*{\id_{\ov{F(x)}} \oplus \kappa} \circ \xi^{-1} \circ 
 \pars*{\nu_{x} \oplus \nu_{a}}^{-1} \circ \vphi^{-1}
 \\
 &= q(\kappa)
\end{align*}
and hence $p =c_{\eps}^{-1} \circ q$ with
\begin{align*}
 \pars*{\co_{k} \circ \Aut_{\K}}[x,\vphi] \circ \co_{k}(\theta_{a})
 &=
 \co_{k}(p)
 \\
 &= \co_{k}\pars*{c_{\eps}^{-1}} \circ \co_{k}(q)
 \\
 &= \co_{k}(q)
 \\
 &=
 \co_{k}\pars*{\theta_{b}} \circ \co_{k}\pars*{\Aut_{\K}\!\left[\,\ov{F(x)},\, 
 \ov{F(\vphi) \circ J_{x,a}}\circ \eta_{F(x),F(a)}\right]}
 \\
 &=
 \co_{k}\pars*{\theta_{b}} \circ \mathbf{H}_{k}\!\left[
 F(x),\, F(\vphi) \circ J_{x,a}
 \right]
 \\
 &=
 \co_{k}\pars*{\theta_{b}} \circ 
 \pars*{ \mathbf{H}_{k} \circ
 U_{\mathbf{F}}}[x,\vphi] \, ,
\end{align*}
where the third equality above is by \cite[Corollary II.6.2]{brown-coh-book}. In other words, the diagram
\[
 \xymatrixcolsep{4.1cm}
 \xymatrixrowsep{1.4cm} 
 \xymatrix{
 \pars*{ \mathbf{H}_{k} \circ U_{\mathbf{F}}}(a) 
 \ar[r]^-{\pars*{ \mathbf{H}_{k} \circ U_{\mathbf{F}}}[x,\vphi]} 
 \ar[d]_-{\co_{k}\pars*{\theta_{a}}} &
 \pars*{ \mathbf{H}_{k} \circ U_{\mathbf{F}}}(b) \ar[d]^-{\co_{k}\pars*{\theta_{b}}}
 \\
 \pars*{\co_{k} \circ \Aut_{\K}}(a) \ar[r]_-{\pars*{\co_{k} \circ \Aut_{\K}}[x,\vphi]} 
 & \pars*{\co_{k} \circ \Aut_{\K}}(b)
 }
\]
 commutes for every  $[x,\vphi] \colon a \to b$ in $U\GG$, exhibiting a natural isomorphism
\[
 \co_{k}(\theta) \colon \mathbf{H}_{k} \circ U_{\mathbf{F}} 
 \xrightarrow{\sim} \co_{k} \circ \Aut_{\K}
\]
of functors $U\GG \to \lMod{\zz}$.

Thanks to Lemma \ref{cat-epi}, the uniqueness claim about $\mathbf{H}_{k}$ will follow once we show that $U_{\mathbf{F}} \colon U\GG \to U\GG^{\wr}$ is essentially surjective and full. For the latter, suppose $X \in \Obj(U\GG^{\wr}) = \Obj(\GG^{\wr})$; we have already exhibited an object $\ov{A}\in \Obj(\GG)$ and an isomorphism
\[
 \mu_{A} \colon F(\ov{A}) \to A
\]
in $\GG$, therefore $[0,\mu_{A}] \colon F(\ov{A}) \to A$ is an isomorphism in $U\GG^{\wr}$. To see $U_{\mathbf{F}}$ is full, suppose $a,b \in \Obj(\GG)$ and
\[
 [X,f] \colon F(a) \to F(b)
\]
is a morphism in $U\GG^{\wr}$, that is $f \colon X \oplus^{\wr} F(a) \to F(b)$ is an (iso)morphism in $\GG^{\wr}$. Writing $x \coloneqq \ov{X} \in \Obj(\GG)$, we have a composite morphism
\[
 F(x \oplus a)
 \xrightarrow{J_{x,a}^{-1}}
 F(x) \oplus^{\wr} F(a) \xrightarrow{\mu_{X} \oplus^{\wr} \id_{F(a)}} X \oplus^{\wr} F(a)
 \xrightarrow{f} F(b)
\]
in $\GG^{\wr}$. As $F \colon \GG \to \GG^{\wr}$ is assumed full, there exists a morphism
\(
 \vphi \colon x \oplus a \to b
\)
such that
\[
 F(\vphi) = f \circ \pars*{\mu_{X} \oplus^{\wr} \id_{F(a)}} \circ J_{x,a}^{-1} \, .
\]
The morphism $U_{\mathbf{F}}[x,\vphi] \colon F(a) \to F(b)$ in $U\GG^{\wr}$ is given as
\begin{align*}
  U_{\mathbf{F}}[x,\vphi] &= \left[F(x),\,F(\vphi) \circ J_{x,a}\right] 
  \\
  &= \left[F(x),\,f \circ \pars*{\mu_{X} \oplus^{\wr} \id_{F(a)}}\right]= [X,f] \, ,
\end{align*}
showing $U_{\mathbf{F}}$ is full.
\end{proof}

\begin{proof}[Proof of \textbf{\emph{Theorem \ref{essek}}}]
 (1): Let us start with a morphism $(f,m,n) \colon m \to n$ in $\atsi_{\GG(c,x)}$. Recall that this means 
 \[
  f \in \GG(c,x)_{n} = \Aut_{\GG}\pars*{c \oplus x^{\oplus n}} \, ,
 \]
so that
\[ T_{\GG,c,x}(f,m,n) = \left[
 x^{\oplus n-m}, f
 \right] \colon c \oplus x^{\oplus m} \to c \oplus x^{\oplus n}
\]
in $U\GG^{\circ}$. Applying $U_{\pmb{\tau}}$ above, we get the morphism
\[
 U_{\pmb{\tau}}\!\left[
 x^{\oplus n-m}, f
 \right] = \left[
 x^{\oplus n-m},\, f \circ \tau_{x^{\oplus n-m}, \, c \oplus x^{\oplus m}}
 \right] \colon c \oplus x^{\oplus m} \to c \oplus x^{\oplus n}
\]
in $U\GG$. Finally, applying $\Aut_{\K}$ yields the group homomorphism
\begin{align*}
 \Aut_{\K}\pars*{c \oplus x^{\oplus m}} &\to \Aut_{\K}\pars*{c \oplus x^{\oplus n}}
 \\
 \kappa &\mapsto \pars*{f \circ \tau_{x^{\oplus n-m}, \, c \oplus x^{\oplus m}}} 
 \circ \pars*{\id_{x^{\oplus n-m}} \oplus \kappa} \circ 
 \pars*{f \circ \tau_{x^{\oplus n-m}, \, c \oplus x^{\oplus m}}}^{-1} \, .  
\end{align*}
The final long expression above can be rewritten as
\begin{align*}
 &\,\,\,\,\,
 \pars*{f \circ \tau_{x^{\oplus n-m}, \, c \oplus x^{\oplus m}}} 
 \circ \pars*{\id_{x^{\oplus n-m}} \oplus \kappa} \circ 
 \pars*{f \circ \tau_{x^{\oplus n-m}, \, c \oplus x^{\oplus m}}}^{-1}
 \\
 &=
 f \circ \tau_{x^{\oplus n-m}, \, c \oplus x^{\oplus m}} 
 \circ \pars*{\id_{x^{\oplus n-m}} \oplus \kappa} \circ 
 \tau^{-1}_{x^{\oplus n-m}, \, c \oplus x^{\oplus m}} \circ f^{-1}
 \\
 &=
 f \circ \pars*{\kappa \oplus \id_{x ^{\oplus n-m}}} \circ f^{-1}
 \\
 &= f \circ \GG(c,x)_{(m,n)}(\kappa) \circ f^{-1} \, .
\end{align*}
It remains to verify that the group homomorphism
\begin{align*}
 \Aut_{\K}\pars*{c \oplus x^{\oplus m}} &\to \Aut_{\K}\pars*{c \oplus x^{\oplus n}}
 \\
 \kappa &\mapsto f \circ \GG(c,x)_{(m,n)}(\kappa) \circ f^{-1}
\end{align*}
coincides with
\[
 \K(c,x)_{(f,m,n)} \colon \K(c,x)_{m} \to \K(c,x)_{n} \, .
\]
To that end, we compute 
\begin{align*}
 \K(c,x)_{n} &= \left\{u \in \GG(c,x)_{n} : \mathbf{F}[\gamma,\xi]_{n}(u) 
 = \id_{\GG^{\wr}(c^{\wr},x^{\wr})_{n}}
 \right\} 
 \\
 &= \left\{u \in \Aut_{\GG}\pars*{c \oplus x^{\oplus n}} : \mathbf{F}[\gamma,\xi]_{n}(u) 
 = \id_{c^{\wr} \oplus^{\wr} {x^{\wr}}^{\oplus^{\wr}n}}
 \right\} 
 \\
 &= 
\begin{cases}
 \left\{u \in \Aut_{\GG}(c) : \gamma \circ F(u) \circ \gamma^{-1} 
 = \id_{c^{\wr}}
 \right\} & \text{if $n=0$,}
 \vspace{0.1cm}
 \\
 \left\{
 u \in \Aut_{\GG}\pars*{c \oplus x^{\oplus n}} : \mathbf{F}\!\left[
 (\gamma \oplus^{\wr} \xi) \circ J_{c,x}^{-1},\,\xi
 \right]_{n-1}\!(u) = \id_{c^{\wr} \oplus^{\wr} {x^{\wr}}^{\oplus^{\wr}n}}
 \right\} & \text{if $n \geq 1$.}
\end{cases}
 \\
 &= 
\begin{cases}
 \left\{u \in \Aut_{\GG}(c) : F(u) = \id_{F(c)}
 \right\} & \text{if $n=0$,}
 \vspace{0.1cm}
 \\
 \left\{
 u \in \Aut_{\GG}\pars*{c \oplus x^{\oplus n}} : \mathbf{F}\!\left[
 (\gamma \oplus^{\wr} \xi) \circ J_{c,x}^{-1},\,\xi
 \right]_{n-1}\!(u) = \id_{c^{\wr} \oplus^{\wr} {x^{\wr}}^{\oplus^{\wr}n}}
 \right\} & \text{if $n \geq 1$.}
\end{cases}
 \\
 &= 
\begin{cases}
 \Aut_{\K}(c) & \text{if $n=0$,}
 \vspace{0.1cm}
 \\
 \left\{
 u \in \Aut_{\GG}\pars*{c \oplus x^{\oplus n}} : \mathbf{F}\!\left[
 (\gamma \oplus^{\wr} \xi) \circ J_{c,x}^{-1},\,\xi
 \right]_{n-1}\!(u) = \id_{c^{\wr} \oplus^{\wr} {x^{\wr}}^{\oplus^{\wr}n}}
 \right\} & \text{if $n \geq 1$.}
\end{cases}
\\
&= \Aut_{\K}\pars*{c \oplus x^{\oplus n}}
\end{align*}
on objects, and the description of the action on morphisms matches exactly that of Proposition \ref{conj-action}.

(2): We shall first show that 
\[
 \mathbf{F}[\gamma,\xi]_{n} \colon \GG(c,x)_{n} \to \GG^{\wr}\pars*{c^{\wr},x^{\wr}}_{n}
\]
(for every choice of $\gamma$, $\xi$) is surjective for every $n \in \zz_{\geq 0}$ by induction. If $n=0$, by definition (see Remark \ref{Gcx-maps}) we are looking at the map
\begin{align*}
 \mathbf{F}[\gamma,\xi]_{0} \colon \Aut_{\GG}(c) &\to \Aut_{\GG^{\wr}}(c^{\wr})
 \\
 f &\mapsto \gamma \circ F(f) \circ \gamma^{-1} \, ,
\end{align*}
which is surjective because $F$ is full and $\gamma$ is an isomorphism. For $n \geq 1$, the map
\[
 \mathbf{F}[\gamma,\xi]_{n} = \mathbf{F}\!\left[
 (\gamma \oplus^{\wr} \xi) \circ J_{c,x}^{-1},\,\xi
 \right]_{n-1}
\]
is surjective via the induction hypothesis. We can therefore apply Proposition \ref{factor-AQ} to the map $\mathbf{F}[\gamma,\xi] \colon \GG(c,x) \to \GG^{\wr}\pars*{c^{\wr},x^{\wr}}$ of $\atsi$-groups to conclude that there is a unique functor
\[
 \co_{k}\pars*{\K(c,x)} \colon \atsi_{\GG^{\wr}(c^{\wr},x^{\wr})} \to \lMod{\zz}
\]
that makes the diagram
\begin{align*}
 \xymatrixcolsep{1.5cm}
 \xymatrix{
 \atsi_{\GG(c,x)} \ar[r]^-{\K(c,x)} \ar[d]_-{\atsi_{\mathbf{F}[\gamma,\xi]}} & \Grp \ar[r]^-{\co_{k}} & \lMod{\zz}
 \\
 \atsi_{\GG^{\wr}(c^{\wr},x^{\wr})} \ar@{-->}_-{\,\,\,\,\co_{k}\pars*{\K(c,x)}}[urr]
 }
\end{align*}
commute. It follows that in the triangular prism (\ref{prism}), the large bottom (slanted) rectangle commutes (uniquely with $\co_{k}\pars*{\K(c,x)}$). 

The commutativity of the back triangle of (\ref{prism}) is (1). 

Next, we shall show that the
\[
 \xymatrixcolsep{2.1cm}
 \xymatrixrowsep{1.4cm} 
 \xymatrix{
 \atsi_{\GG(c,x)} \ar[r]^-{T_{\GG,c,x}} \ar[d]_-{\atsi_{\mathbf{F}[\gamma,\xi]}}
 & U\GG^{\circ} \ar[d]^-{U_{\mathbf{F}^{\circ}}}
 \\
 \atsi_{\GG^{\wr}(c^{\wr},x^{\wr})} \ar[r]^-{T_{\GG^{\wr},c^{\wr},x^{\wr}}}  & U{\GG^{\wr}}^{\circ}
 }
\] 
part of the top face of (\ref{prism}) commutes up to a natural isomorphism: this amounts to finding isomorphisms 
\[\lambda[\gamma,\xi]_{n} \colon F(c \oplus x^{\oplus n}) \to c^{\wr} \oplus^{\wr} {x^{\wr}}^{\oplus^{\wr} n}\] 
in $U{\GG^{\wr}}^{\circ}$ for every $n \in \zz_{\geq 0}$ such that
\[
 \xymatrixcolsep{2.5cm}
 \xymatrixrowsep{1.5cm} 
 \xymatrix{
 \Hom_{\atsi_{\GG(c,x)}}(m,n) \ar[r]^-{T_{\GG,c,x}} \ar[d]_-{\atsi_{\mathbf{F}[\gamma,\xi]}}
 & \Hom_{U\GG^{\circ}}\pars*{c \oplus x^{\oplus m},c \oplus x^{\oplus n}} 
 \ar[d]^-{U_{\mathbf{F}^{\circ}}}
 \\
 \Hom_{\atsi_{\GG^{\wr}(c^{\wr},x^{\wr})}}(m,n) \ar[dr]_-{T_{\GG^{\wr},c^{\wr},x^{\wr}}}  
 & \Hom_{U{\GG^{\wr}}^{\circ}}\pars*{F\pars*{c \oplus x^{\oplus m}},
 F\pars*{c \oplus x^{\oplus n}}} \ar[d]^{\lambda[\gamma,\xi]_{n} \circ \,-\, \circ \lambda[\gamma,\xi]_{m}^{-1}}
 \\
 & \Hom_{U{\GG^{\wr}}^{\circ}}\pars*{c^{\wr} \oplus^{\wr} 
 {x^{\wr}}^{\oplus^{\wr} m},c^{\wr} \oplus^{\wr} {x^{\wr}}^{\oplus^{\wr} n}}
 } \tag{$\dagger$} \label{tepe1}
\] 
commutes whenever $0\leq m \leq n$. We define $\lambda[\gamma,\xi]_{n}$ inductively as
\[
 \lambda[\gamma,\xi]_{n} \coloneqq 
\begin{cases}
 \left[0^{\wr},\gamma\right] & \text{if $n=0$,}
 \\
 \lambda\!\left[\pars*{\gamma \oplus^{\wr} \xi} \circ J_{c,x}^{-1},\,\xi\right]_{n-1} & 
 \text{if $n \geq 1$.}
\end{cases}
\]
To see (\ref{tepe1}) commutes, let us start with a morphism $(f,m,n) \colon m \to n$ in $\atsi_{\GG(c,x)}$. Recall that this means 
 \[
  f \in \GG(c,x)_{n} = \Aut_{\GG}\pars*{c \oplus x^{\oplus n}} \, ,
 \]
so that
\[ T_{\GG,c,x}(f,m,n) = \left[
 x^{\oplus n-m}, f
 \right] \colon c \oplus x^{\oplus m} \to c \oplus x^{\oplus n}
\]
in $U\GG^{\circ}$. According to Proposition \ref{UF-oldu}, applying $U_{\mathbf{F}^{\circ}}$ above yields
\begin{align*}
 U_{\mathbf{F}^{\circ}}\!\left[
 x^{\oplus n-m}, f
 \right] &= U_{F,J^{\circ},\psi}\!\left[
 x^{\oplus n-m}, f
 \right]
 \\
 &= \left[
 F\pars*{x^{\oplus n-m}},\,F(f) \circ J^{\circ}_{x^{\oplus n-m},c \oplus x^{\oplus m}}
 \right]
 \\
 &= \left[
 F\pars*{x^{\oplus n-m}},\,F(f) \circ J_{c \oplus x^{\oplus m}, x^{\oplus n-m}}
 \right] \, .
\end{align*}
Consequently carrying $(f,m,n)$ right-down-down in (\ref{tepe1}) results in
\[
 \lambda[\gamma,\xi]_{n} \circ
 \left[
F\pars*{x^{\oplus n-m}},\,F(f) \circ J_{c \oplus x^{\oplus m}, x^{\oplus n-m}}
 \right] \circ \lambda[\gamma,\xi]_{m}^{-1} \, .
\]
On the other hand, the morphism
\[
 \atsi_{\mathbf{F}[\gamma,\xi]}(f,m,n) = \pars*{\mathbf{F}[\gamma,\xi]_{n}(f),m,n} \colon
 m \to n
\]
in $\atsi_{\GG^{\wr}(c^{\wr},x^{\wr})}$ satisfies
\[
 T_{\GG^{\wr},c^{\wr},x^{\wr}}\pars*{\mathbf{F}[\gamma,\xi]_{n}(f),m,n} =
 \left[
 {x^{\wr}}^{\oplus^{\wr} n-m},\,\mathbf{F}[\gamma,\xi]_{n}(f)
 \right]
\]
which is the result of carrying $(f,m,n)$ down-downright in (\ref{tepe1}). We shall show equality of the two paths by induction on $m$. When $m=0$ the desired equality for every $f \in \Aut_{\GG}(c \oplus x^{\oplus n})$ is
\begin{align*}
 \lambda[\gamma,\xi]_{n} \circ
 \left[
F\pars*{x^{\oplus n}},\,F(f) \circ J_{c, x^{\oplus n}}
 \right] \circ \left[0^{\wr},\gamma\right]^{-1} 
 &=
 \left[
 {x^{\wr}}^{\oplus^{\wr} n},\,\mathbf{F}[\gamma,\xi]_{n}(f)
 \right] \, .
\end{align*}
This equality indeed holds because by Definition \ref{monoidal-F} the isomorphism $\psi \colon F(0) \to 0^{\wr}$ in $\GG^{\wr}$ satisfies
\begin{align*}
 \pars*{\gamma \circ F(f)} \circ \pars*{\psi {\oplus^{\wr}}^{\circ} \id_{F(c)}} 
 &=
 \gamma \circ F(f) \circ \pars*{\id_{F(c)} \oplus^{\wr}\, \psi}
 \\
 &=  
 \gamma \circ F(f) \circ J_{c,0} \, .
\end{align*}
When $n=m \geq 1$, the desired equality for every $f \in \Aut_{\GG}(c)$ is
\begin{align*}
 \lambda[\gamma,\xi]_{n} \circ
 \left[
F\pars*{0},\,F(f) \circ J_{c \oplus x^{\oplus n}, 0}
 \right] \circ \lambda[\gamma,\xi]_{n}^{-1}
 &= \left[
 0^{\wr},\,\mathbf{F}[\gamma,\xi]_{n}(f)
 \right]
 \\
 \lambda[\gamma,\xi]_{n} \circ
 \left[
F\pars*{0},\,F(f) \circ J_{c \oplus x^{\oplus n}, 0}
 \right] 
 &= \left[
 0^{\wr},\,\mathbf{F}[\gamma,\xi]_{n}(f)
 \right] \circ \lambda[\gamma,\xi]_{n} \, ,
\end{align*}
equivalently the equality of
\begin{align*}
 &\,\,\,\,\,
 \lambda\!\left[\pars*{\gamma \oplus^{\wr} \xi} \circ J_{c,x}^{-1},\,\xi\right]_{n-1} \circ
 \left[
F\pars*{0},\,F(f) \circ J_{c \oplus x^{\oplus n}, 0}
 \right] 
 \\
 &=
 \lambda\!\left[\pars*{\gamma \oplus^{\wr} \xi} \circ J_{c,x}^{-1},\,\xi\right]_{n-1} \circ
 \left[
F\pars*{0},\,F(f) \circ J_{(c \oplus x) \oplus x^{\oplus n-1}, 0}
 \right]  
\end{align*}
and
\begin{align*}
 \left[
 0^{\wr},\,\mathbf{F}\!\left[
 (\gamma \oplus^{\wr} \xi) \circ J_{c,x}^{-1},\,\xi
 \right]_{n-1}\!(f)
 \right] \circ \lambda\!\left[\pars*{\gamma \oplus^{\wr} \xi} \circ J_{c,x}^{-1},\,\xi\right]_{n-1} \, ,
\end{align*}
which follows from the inductive hypothesis. When $n > m > 0$, the desired equality for every $f \in \Aut_{\GG}(c \oplus x^{\oplus n})$ is
\[
 \lambda[\gamma,\xi]_{n} \circ
 \left[
F\pars*{x^{\oplus n-m}},\,F(f) \circ J_{c \oplus x^{\oplus m}, x^{\oplus n-m}}
 \right] 
=
 \left[
 {x^{\wr}}^{\oplus^{\wr} n-m},\,\mathbf{F}[\gamma,\xi]_{n}(f)
 \right] \circ \lambda[\gamma,\xi]_{m} \, ,
\]
equivalently the equality of
\begin{align*}
  &\,\,\,\,\,
 \lambda\!\left[\pars*{\gamma \oplus^{\wr} \xi} \circ J_{c,x}^{-1},\,\xi\right]_{n-1}
 \circ
 \left[
F\pars*{x^{\oplus n-m}},\,F(f) \circ J_{c \oplus x^{\oplus m}, x^{\oplus n-m}}
 \right] 
 \\
 &=
 \lambda\!\left[\pars*{\gamma \oplus^{\wr} \xi} \circ J_{c,x}^{-1},\,\xi\right]_{n-1}
 \circ
 \left[
F\pars*{x^{\oplus (n-1)-(m-1)}},\,F(f) \circ J_{(c \oplus x) \oplus x^{\oplus m-1}, x^{\oplus (n-1)-(m-1)}}
 \right] 
\end{align*}
and
\begin{align*}
 &\,\,\,\,\,\left[
 {x^{\wr}}^{\oplus^{\wr} n-m},\,\mathbf{F}\!\left[
 (\gamma \oplus^{\wr} \xi) \circ J_{c,x}^{-1},\,\xi
 \right]_{n-1}
 \right] \circ 
 \lambda\!\left[\pars*{\gamma \oplus^{\wr} \xi} \circ J_{c,x}^{-1},\,\xi\right]_{m-1}
 \\
 &=
 \left[
 {x^{\wr}}^{\oplus^{\wr} (n-1)-(m-1)},\,\mathbf{F}\!\left[
 (\gamma \oplus^{\wr} \xi) \circ J_{c,x}^{-1},\,\xi
 \right]_{n-1}
 \right] \circ 
 \lambda\!\left[\pars*{\gamma \oplus^{\wr} \xi} \circ J_{c,x}^{-1},\,\xi\right]_{m-1} \, ,
\end{align*}
which again follows from the inductive hypothesis.

Next, we shall show the remaining rectangle on the top face of the triangular prism (\ref{prism}), namely
\[
 \xymatrixcolsep{2.1cm}
 \xymatrixrowsep{1.4cm} 
 \xymatrix{
 U\GG^{\circ} \ar[r]^{U_{\pmb{\tau}}} \ar[d]_-{U_{\mathbf{F}^{\circ}}} &
 U\GG \ar[d]^-{U_{\mathbf{F}}}
 \\
 U{\GG^{\wr}}^{\circ} \ar[r]_{U_{\pmb{\tau}^{\wr}}}&
 U{\GG^{\wr}} 
 }
\] 
commutes. This follows because the above diagram is the image of
\[
 \xymatrix{
 \GG^{\circ} \ar[r]^{\pmb{\tau}} \ar[d]_-{\mathbf{F}^{\circ}} &
 \GG \ar[d]^-{\mathbf{F}}
 \\
 {\GG^{\wr}}^{\circ} \ar[r]_{\pmb{\tau}^{\wr}}&
 \GG^{\wr} 
 }
\] 
under $U \colon \bmgpd \to \moncat$, which is commutative by Proposition \ref{braid-op-iso}.

The existence and uniqueness of $\mathbf{H}_{k}$ that makes the side rectangular face of the triangular prism (\ref{prism})
\[
 \xymatrixcolsep{2.1cm}
 \xymatrixrowsep{1.4cm} 
 \xymatrix{
 U\GG \ar[r]^-{U_{\mathbf{F}}} \ar[d]_{\Aut_{\K}} &  U{\GG^{\wr}} \ar[d]^{\mathbf{H}_{k}}
 \\
 \Grp \ar[r]_{\co_{k}} & \lMod{\zz}
 }
\] 
 up to natural isomorphism, is Proposition \ref{conj-action-Hk}. 
 
It remains to show the commutativity of the front triangle of (\ref{prism}), namely
\[
 \xymatrixcolsep{2.1cm}
 \xymatrixrowsep{1.4cm} 
 \xymatrix{
  \atsi_{\GG^{\wr}(c^{\wr},x^{\wr})} \ar[r]^-{T_{\GG^{\wr},c^{\wr},x^{\wr}}} 
 \ar[drr]_-{\co_{k}\pars*{\K(c,x)}\,\,\,\,\,\,} & 
 U{\GG^{\wr}}^{\circ} 
 \ar[r]^-{U_{\pmb{\tau}^{\wr}}} & U\GG^{\wr} \ar[d]^-{\mathbf{H}_{k}}
 \\
  & & \lMod{\zz}  
 }
\] 
Fortunately we can see this without laborious computations. It follows from what we have already established that
\begin{align*}
 \co_{k}\pars*{\K(c,x)} \circ \atsi_{\mathbf{F}[\gamma,\xi]} 
 &\cong \co_{k} \circ \,\K(c,x) 
 \\
 &\cong \co_{k} \circ \Aut_{\K} \circ\, U_{\pmb{\tau}} \circ T_{\GG,c,x}
 \\
 &\cong \mathbf{H}_{k} \circ U_{\mathbf{F}} \circ U_{\pmb{\tau}} \circ T_{\GG,c,x}
 \\
 &\cong \mathbf{H}_{k} \circ U_{\pmb{\tau}^{\wr}} 
 \circ U_{\mathbf{F}^{\circ}} \circ T_{\GG,c,x} 
 \\
 &\cong \mathbf{H}_{k} \circ U_{\pmb{\tau}^{\wr}} 
 \circ T_{\GG^{\wr},c^{\wr},x^{\wr}} \circ \atsi_{\mathbf{F}[\gamma,\xi]} \, .
\end{align*}
Thanks to Lemma \ref{cat-epi}, we will be done once we establish that the functor 
\[
  \atsi_{\mathbf{F}[\gamma,\xi]} \colon \atsi_{\GG(c,x)} \to \atsi_{\GG^{\wr}(c^{\wr},x^{\wr})}
\] 
is essentially surjective and full. The map it induces on the objects is simply the identity
\[ 
 \id_{\zz_{\geq 0}} \colon \zz_{\geq 0} \to \zz_{\geq 0} \, ,
\]
so it remains to show fullness. We are to show that for every morphism $\pars*{g,m,n} \colon m \to n$ in $\atsi_{\GG^{\wr}(c^{\wr},x^{\wr})}$ there exists a morphism $\pars*{f,m,n} \colon m \to n$ in $\atsi_{\GG(c,x)}$ such that
\begin{align*}
 (g,m,n) &= \atsi_{\mathbf{F}[\gamma,\xi]}(f,m,n)
 \\
 &= \pars*{\mathbf{F}[\gamma,\xi]_{n}(f),m,n} \, ,
\end{align*}
so it suffices to see that the group homomorphism
\[
 \mathbf{F}[\gamma,\xi]_{n} \colon \GG(c,x)_{n} \to \GG^{\wr}\pars*{c^{\wr},x^{\wr}}_{n}
\]
is surjective for every $n \in \zz_{\geq 0}$, which we have already shown in the beginning of the proof of part (2) above.
\end{proof} 
\end{appendices}

\bibliographystyle{hamsalpha}
\bibliography{stable-boy}

\end{document}